\newcommand{\sub}{\subseteq}
\newcommand{\eps}{\varepsilon}
\newcommand{\Aut}{\operatorname{Aut}}
\newcommand{\Ker}{\operatorname{Ker}}
\newcommand{\TF}{\operatorname{TF}}
\newcommand{\Id}{\operatorname{Id}}
\newcommand{\diag}{\operatorname{diag}}
\newcommand{\set}[2]{\left\{\left.#1\;\right|\; #2\right\}}
\newcommand{\rest}[2]{#1\raisebox{-0.5ex}{\mbox{$\mid_{#2}$}}}
\theoremstyle{plain}
\newtheorem{theorem}{Theorem}
\newtheorem{lemma}[theorem]{Lemma}
\newtheorem{proposition}[theorem]{Proposition}
\newtheorem{corollary}[theorem]{Corollary}
\theoremstyle{definition}
\newtheorem{definition}[theorem]{Definition}
\theoremstyle{remark}
\newtheorem{remark}[theorem]{Remark}
\newtheorem{example}[theorem]{Example}
\numberwithin{equation}{section}
\numberwithin{theorem}{section}
\begin{document}
\title{\textbf{Extensions of special 3-fields}}
\author{{Steven Duplij}}
\address{Center for Information Technology (WWU IT),
Westf\"{a}lische Wilhelms-Universit\"{a}t M\"{u}nster,
R\"ontgenstra\ss e 7-13, D-48149 M\"{u}nster, Germany}
\email{\small douplii@uni-muenster.de,
sduplij@gmail.com}

\author{{Wend Werner}}
\address{Mathematisches Institut, Westf\"{a}lische Wilhelms-Universit\"{a}t M\"{u}nster\\
Einsteinstrasse 62, D-48149 M\"{u}nster, Germany}

\begin{abstract}
We investigate finite field extensions of the unital 3-field, consisting of the unit element alone,
and find considerable differences to classical field theory.
Furthermore, the structure of their automorphism groups is clarified and the respective subfields
are determined.
In an attempt to better understand the structure of 3-fields that show up here we look at ways in which new
unital 3-fields can be obtained from known ones in terms of product structures, one of them the
Cartesian product which has no analogue for binary fields.
\end{abstract}
\maketitle

\markboth{}{}

\pagestyle{headings}

\thispagestyle{empty}

\section{Introduction}
Algebraic structure which is based on composing more than two elements can be traced back to
early work of D\"ornte \cite{dor3} and
Post \cite{pos} and has later shown an increase in interest, see for instance \cite{boh/wer2,cel,cro1,cro/tim,duplij2022,elg/bre,hig,kur1,lee/but},
and especially in physical model building \cite{abl2022,bag/lam1,azc/izq,nam0,ker2000,ker2012}. On many occasions, such a theory
substantially profit from
embedding objects into a larger structure in which such seemingly unconventional algebraic structure can be
reduced to more conventional concepts. For a typical example see the brief discussion of
ternary commutative groups in \cite{dup/wer2021}.

In cases where multiple algebraic operations of this kind are present (for example, the 3-fields investigated here) such an approach, however, is less successful, and the theory requires rather novel techniques. Our principal definition in the following is

\begin{definition}
A set $R$ equipped with two operations $R^3\to R$, called \emph{ternary addition} and \emph{ternary multiplication},
is a called a \emph{3-ring}, iff for each $r,r_{1,2,3},s_{1,2}\in R$
\begin{enumerate}
  \item there are additive and, respectively, multiplicative \emph{querelements} $\overline{r}\in R$
  and $\widehat{r}$ so that $r\hat{+}\overline{r}\hat{+}r_1=r_1$ as well as $r\widehat{r}r_1=r_1$,
  \item $s_1s_2(r_1\hat{+}r_2\hat{+}r_3)=s_1s_2r_1\hat{+}s_1s_2r_2\hat{+}s_1s_2r_3$, and
  \item both operations are associative (i.e.\ no brackets are needed for multiple applications of these
  operations)
 \end{enumerate}
$R$ is called \emph{commutative}, if the order of factors and summands can be permuted in any possible way,
and \emph{unital} iff there is an element $1\in R$ with $11r=r$ for all $r\in R$.

A (unital) \emph{3-field} $K$ is a (unital) 3-ring iff for each $k\in K$ there is $\widehat{k}\in K$
so that $k\widehat{k}k_1=k_1$ for all $k_1\in K$.
\end{definition}
In a 3-ring (or, more generally, in a commutative 3-group), the presence of a multiplicative unit alone
allows to introduce a binary product of commutative groups
\begin{equation}
x\cdot y=x1y,
\end{equation}
which has the property that applying it twice on three factors results in the original ternary product.
Similarly, for a 3-ring $R$, a zero element $0$
is defined by requiring that
\begin{equation}
0xy=x0y=xy0=0\qquad\text{for all $x,y\in R$}.
\end{equation}
Such an element is uniquely determined
and as in the case of a 1-element for the multiplication, it allows for reducing ternary addition of $R$
to a binary one.

The fields we will investigate in the following will come equipped with a ternary addition (and no zero element),
and a multiplication that possesses a unit so that, right from the outset, we will assume multiplication to be binary. We then have
\begin{theorem}[\cite{dup/wer2021} Theorems 3.4, 6.1]
A unital 3-field $F$ embeds into a binary field $K$ (with its inherited ternary addition and binary multiplication)
if for each $y\in F\setminus\{1\}$ the equation
\begin{equation}
x+y-xy=1
\end{equation}
has the only solution $x=1$. Whenever $F$ is finite this happens if and only iff $F=\{1\}$.
\end{theorem}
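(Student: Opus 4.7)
The plan is to handle the embedding criterion and the finite case separately. For the embedding criterion, one direction is immediate: if $F\subseteq K$ with $K$ a binary field, then the expression $x+y-xy$, interpreted in $F$ as $x\hat{+}y\hat{+}\overline{xy}$, coincides in $K$ with $1-(1-x)(1-y)$, so the equation reduces to $(1-x)(1-y)=0$; since $K$ has no zero divisors, $y\neq 1$ leaves only $x=1$. The harder direction is the converse: I would form a commutative ring $R$ generated by $F$, subject to the relations encoded by its ternary addition and binary multiplication (with a formal zero adjoined). The hypothesis translates in $R$ into the no-zero-divisor condition $(1-x)(1-y)\neq 0$ for $x,y\in F\setminus\{1\}$, which — combined with $0$ being the only would-be zero divisor by construction — makes $R$ an integral domain; embedding $R$ into its field of fractions produces the desired binary field $K$.

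For the finite case, one direction is immediate: $F=\{1\}$ embeds into $\mathbb{F}_{2}$. For the other, suppose $F$ finite with $|F|\geq 2$ embeds in $K$. Then $F$ is a finite multiplicative subgroup of $K^{\ast}$, hence cyclic of order $n\geq 2$, and iterating ternary-sum closure yields closure under every odd-fold binary sum; in particular $m\cdot 1\in F$ for every odd $m$. This contradicts finiteness in characteristic $0$ and produces $0\in F$ in odd characteristic $p$, so $\mathrm{char}\,K=2$. A counting argument inside the $\mathbb{F}_{2}$-span $A=\mathbb{F}_{2}[F]$, using that each $\alpha\in A$ is a subset-sum of $F$-elements with an odd or even number of summands (in the even case, adding any fixed $g_{0}\in F$ produces an odd-fold sum, hence lies in $F$), gives $|A|\leq 2|F|$; combined with $|F|\mid|A|-1$, this forces $F=A^{\ast}$. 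But then any two distinct $a,b\in F$ satisfy $a+b\in A^{\ast}=F$, so $a+b+(a+b)=0\notin F$, violating closure — the desired contradiction.

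The principal obstacle is the converse direction of the embedding criterion: making precise the construction of the commutative ring $R$ and showing that it faithfully encodes the ternary addition and binary multiplication of $F$. The multiplicative-querelement and ternary-distributivity axioms of $F$ supply the ring-theoretic bones, and the no-zero-divisor hypothesis is exactly what is needed to turn $R$ into an integral domain; the technical work lies in checking that the imposed relations do not collapse $R$ or introduce spurious identifications.
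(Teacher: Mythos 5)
The paper itself does not prove this statement --- it is quoted from \cite{dup/wer2021} --- so your attempt can only be measured against the machinery the paper summarizes in the introduction. Your finite-case argument is correct and self-contained, and it is a nice elementary route: odd-fold additive closure forces characteristic $2$; the $\mathbb{F}_2$-span $A$ of $F$ is a finite subring of $K$, hence a finite field; every element of $A$ is an odd subset-sum (hence in $F$) or an even one (hence in $g_0+F$), giving $|A|\le 2|F|$; Lagrange gives $|F|\mid |A|-1$, forcing $F=A^\ast$; and then the ternary sum $a+b+(a+b)=0\notin F$ for distinct $a,b\in F$ yields the contradiction. That half stands on its own.

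The gap is in the converse half of the embedding criterion, which is the substantive part. You propose a ring $R$ presented by generators and relations and assert that the hypothesis makes it a domain, but two things are missing, and they are the actual content. First, faithfulness: nothing about a presentation guarantees that $F$ injects into $R$, and ``checking that the imposed relations do not collapse $R$'' is precisely the theorem's difficulty, which you defer rather than resolve. Second, even granted a faithful $R$, the hypothesis only controls the special products $(1-x)(1-y)$ with $x,y\in F\setminus\{1\}$; to get ``no zero divisors'' you need the structural fact that every element of the ring generated by $F$ is either a unit lying in $F$ or of the form $1-y$ with $y\in F$ (odd sums of elements of $F$ lie in $F$, even sums are of this second type), and your phrase ``$0$ being the only would-be zero divisor by construction'' cannot simply be imposed --- forcing it by a further quotient could itself collapse $F$. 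The construction recalled in the paper removes both obstacles at once: $\mathcal{Q}(F)$ is realized concretely as the translation maps $q_{a,b}\colon f\mapsto f+a+b$ on $F$, so $\mathcal{U}(F)=F\cup\mathcal{Q}(F)$ is faithful for free; each non-unit has the unique form $q_{1,f}$, i.e.\ is $1-y$ for some $y\in F$, and the hypothesis says exactly that nonzero non-units have nonzero products, while units are never zero divisors; hence $\mathcal{U}(F)$ is an integral domain and $K$ is its field of fractions, into which $F$ embeds with its inherited operations. If you replace your abstract $R$ by this concrete enveloping ring, your outline becomes a complete proof.
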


Let us explain some of the basic features of the theory that will be used in the following (details are in \cite{dup/wer2021}).

To each unital 3-field $F$ belongs a uniquely determined (binary) local ring $\mathcal{U}(F)$ into which it embeds as the
subset $\mathcal{U}(F)^\ast$ of units in such a way that the ternary sum of $F$ coincides with the binary sum of $\mathcal{U}(F)$ applied twice. (And, conversely, the units of any local ring $\mathcal{U}$ with residual
field $\mathbb{F}_2$ form a unital 3-field, with its inherited structure.)
We have $\mathcal{U}(F)=\mathcal{Q}(F)\cup F$,
where the binary non-unital ring $\mathcal{Q}(F)$ consists of all mappings (pairs) $q_{a,b}: f\longmapsto f\hat{+}a\hat{+}b$,
$a,b\in F$, with addition and multiplication coming from pointwise operations on the set of mappings $F\to F$.
We will frequently make use of the fact that each $q\in\mathcal{Q}(F)$ has a unique representation in the form
$q=q_{1,f}$, $f\in F$.
As we will consider every unital 3-field $F$ to be naturally embedded into the binary ring $\mathcal{U}(F)$
so that the querelement $\overline{f}$ from $F$ becomes $-f$.

\begin{remark}
It is for this reason that we no longer will formally distinguish between the binary and ternary sums and
write + throughout.
\end{remark}

Any ternary morphism $\phi:F_1\to F_2$ canonically extends to a binary morphism $\mathcal{Q}(\phi):\mathcal{Q}(F_1)\to\mathcal{Q}(F_2)$
(and hence to a binary morphism $\mathcal{U}(\phi):\mathcal{U}(F_1)\to\mathcal{U}(F_2)$) via $\phi(q_{a,b})=q_{\phi(a),\phi(b)}$. The kernel
of $\mathcal{Q}(\phi)$ is a binary ideal in $\mathcal{U}(F_1)$, and we will address these binary ideals of $\mathcal{U}(F_1)$
as \emph{the (ternary) ideals of $F_1$}. We also will formally write a short exact sequence of unital 3-fields as
\begin{equation}
0\longrightarrow J\longrightarrow F\longrightarrow F_0\longrightarrow 0,
\end{equation}
with the understanding that we actually are dealing with unital 3-fields $F_1,F$ and $F_0$,
a short exact sequence of binary rings,
\begin{equation}
0\longrightarrow\mathcal{U}(F_1)\longrightarrow\mathcal{U}(F)\longrightarrow\mathcal{U}(F_0)\longrightarrow 0,
\end{equation}
and the ideal $J\sub\mathcal{Q}(F_1)$ arising as the kernel of the second arrow.

Using analogous definitions for unital 3-rings, the quotient of a unital 3-ring
by an ideal $J$ of $\mathcal{U}(R)$ is a unital 3-field iff for any proper ideal $J_0$ of $\mathcal{U}(R)$ for which
$J\sub J_0$ it follows that $J_0\cap R=\emptyset$. Note that this condition is automatically satisfied when $R$ already
is a 3-field.

The basic examples are the prime fields $\TF(n)=\set{2k-1}{k=1,\ldots,2^{n-1}}\sub\mathbb{Z}/2^n$ and
$\TF(\infty)$, the ternary field of quotients of the unital 3-ring $2\mathbb{Z}+1$. As unital 3-fields, they are
generated by the element 1. Since the unit of each unital 3-field $F$
generates a uniquely determined prime field $P_n$ inside $F$, the number of elements in this subfield,
the characteristic $\chi(F)=2^{n-1}$ of $F$, is well-defined.

Extensions of 3-fields are a much more complicated subject than its binary counterpart. The present investigation
is a first attempt at a deeper understanding. The next section deals with products which exist
thanks to the absence of a zero element. In some cases, they provide 3-field extensions. The final section is devoted to
3-field extensions of $\{1\}$, which are numerous and follow only in some cases the paths of Galois theory.

\section{Products of 3-fields}
\subsection{Cartesian Products}
Since unital 3-fields are supposed to be proper, for each pair of unital 3-fields $F_{1,2}$,
all elements of the Cartesian product $F_1\times F_2$ posses a multiplicative inverse so that
$F_1\times F_2$ is itself a unital 3-field,
under pointwise operations. For the same reason, however, there is no canonical embedding of one
of these fields into $F_1\times F_2$. (There is an exception in characteristic 1, though:
In this case
\begin{equation}
\iota_1:F_1\to F_1\times F_2,\qquad\iota_1(f)=(f,1_2)
\end{equation}
is an embedding with $\pi_1\iota_1=\Id_{F_1}$.)

\begin{theorem}\label{cart-product-char}
Let $F$ be a unital 3-field. Then the following are equivalent:
\begin{enumerate}
\item
$F$ is the Cartesian product of two 3-fields $F_{1,2}$.
\item
There are $Q_{1,2}\sub Q(F)$ so that as a binary ring, $\mathcal{Q}(F)=Q_1\oplus Q_2$.
\end{enumerate}
\end{theorem}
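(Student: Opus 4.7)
For $(1)\Rightarrow(2)$, I would start from $F=F_1\times F_2$ with pointwise operations and identify $\mathcal{U}(F)$ explicitly. Since the correspondence recalled from \cite{dup/wer2021} makes $\mathcal{U}(F)$ the unique local binary ring whose unit group is $F$ and whose residue field is $\mathbb{F}_2$, and since the subring
\[
(F_1\times F_2)\cup(\mathcal{Q}(F_1)\times\mathcal{Q}(F_2))\sub\mathcal{U}(F_1)\times\mathcal{U}(F_2)
\]
has exactly these features (closure under $+$ and $\cdot$ is a one-line residue-class check in $\mathbb{F}_2\times\mathbb{F}_2$), we conclude $\mathcal{Q}(F)=\mathcal{Q}(F_1)\times\mathcal{Q}(F_2)$. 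The ideals $Q_1:=\mathcal{Q}(F_1)\times\{0\}$ and $Q_2:=\{0\}\times\mathcal{Q}(F_2)$ of $\mathcal{U}(F)$ then satisfy $Q_1\cap Q_2=0$, $Q_1+Q_2=\mathcal{Q}(F)$ and $Q_1Q_2=0$, giving the desired decomposition.

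For $(2)\Rightarrow(1)$, the plan is to construct each factor as a quotient of $F$ and assemble them via a Chinese-Remainder-style map. Reading the direct sum as a decomposition into ideals of $\mathcal{U}(F)$, set $R_i:=\mathcal{U}(F)/Q_j$ for $\{i,j\}=\{1,2\}$. Because $Q_j\sub\mathcal{Q}(F)$, the quotient $R_i$ is again local with maximal ideal $\mathcal{Q}(F)/Q_j$ and residue field $\mathbb{F}_2$; the criterion recalled in the preliminaries (any proper ideal extending $Q_j$ stays inside $\mathcal{Q}(F)$ and hence misses $F$) then guarantees $R_i=\mathcal{U}(F_i)$ for a unital 3-field $F_i$ equal to the image of $F$ under the quotient map. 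Define
\[
\phi:F\to F_1\times F_2,\qquad f\mapsto(f+Q_2,\,f+Q_1).
\]
This is a 3-field homomorphism, and it is injective because $Q_1\cap Q_2=0$.

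The hard part is surjectivity of $\phi$: classical Chinese Remainder is not directly available, since $Q_1+Q_2=\mathcal{Q}(F)$ is only the maximal ideal, not the whole ring $\mathcal{U}(F)$. The rescue comes from the residue field being $\mathbb{F}_2$. Given target components represented by units $a,b\in F$, both reduce to $1$ modulo $\mathcal{Q}(F)$, so the difference $a-b$ automatically lies in $\mathcal{Q}(F)=Q_1\oplus Q_2$. Writing $a-b=t+s$ with $t\in Q_1$ and $s\in Q_2$, the element $f:=a-s$ is a unit (unit plus maximal-ideal element stays a unit in a local ring) and satisfies $f-a=-s\in Q_2$ and $f-b=(a-b)-s=t\in Q_1$, so $\phi(f)$ equals the prescribed pair. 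A side point to address along the way is precisely the interpretation of ``direct sum as a binary ring'' as producing ideals of $\mathcal{U}(F)$ rather than merely of $\mathcal{Q}(F)$, since this is what permits forming the quotients $\mathcal{U}(F)/Q_j$ and endowing them with their inherited 3-field structure.
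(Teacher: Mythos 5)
Your proposal is correct and follows essentially the same route as the paper: for $(1)\Rightarrow(2)$ both identify $\mathcal{Q}(F_1\times F_2)$ with $\mathcal{Q}(F_1)\oplus\mathcal{Q}(F_2)$, and for $(2)\Rightarrow(1)$ both pass to the quotient 3-fields determined by $Q_1,Q_2$ and show the diagonal map is bijective, your explicit Chinese-remainder computation being a concrete version of the paper's coset argument for surjectivity. The side point you flag (that the $Q_j$ are ideals of $\mathcal{U}(F)$, not merely of $\mathcal{Q}(F)$) is immediate since every unit of $\mathcal{U}(F)$ has the form $1+q$ with $q\in\mathcal{Q}(F)$, so it poses no obstacle.
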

\begin{proof}
If $F=F_1\times F_2$ then $q_{(a_1,a_2),(b_1,b_2)}\in\mathcal{Q}(F_1\times F_2)$
acts by $q_{(a_1,a_2),(b_1,b_2)}(f_1,f_2)=(a_1+b_1+f_1,a_2+b_2+f_2)$
establishing the binary ring isomorphism
\begin{equation}
Z:\mathcal{Q}(F_1\times F_2)\to\mathcal{Q}(F_1)\oplus\mathcal{Q}(F_2),\qquad
q_{(a_1,a_2),(b_1,b_2)}\mapsto\left(q_{a_1,b_1},q_{a_2,b_2}\right).
\end{equation}
Assuming $\mathcal{Q}(F)=Q_1\oplus Q_2$ as rings, both $Q_i$ are ideals of $\mathcal{Q}(F)$
and hence yield 3-fields $F_i=F/Q_i$ with corresponding quotient maps $\pi_i$. Then $\pi(f)=(\pi_1(f),\pi_2(f))$
defines an injective morphism $F\to F_1\times F_2$ of 3-fields. Because $\pi_1^{-1}(f_1)=f^0_1+Q_1$ for some
$f_1^0\in F$ and $\pi_2(f^0_1+Q_1)=F_2$, $\pi$ is surjective.
\end{proof}

The above result and its proof can easily be extended to infinite products $\prod_{i\in I}F_i$ of
3-fields, and this is the product in the categorical sense: Whenever, for a 3-field $G$, there are
morphisms $\psi_i: G\to F_i$ there is a unique mapping $\psi:G\to\prod_{i\in I}F_i$ which can be shown
to be a 3-field morphism.

Note also that $\mathcal{Q}(F)=\mathcal{Q}(F_1)\oplus\mathcal{Q}(F_2)$ is not related to a similar
decomposition of $\mathcal{U}(F)$, since
\begin{equation}
\left[\mathcal{Q}(F_1)\oplus\mathcal{Q}(F_2)\right]\cup\left[F_1\times F_2\right]
\neq
\left[\mathcal{Q}(F_1)\cup F_1\right]\oplus\left[\mathcal{Q}(F_2)\cup F_2\right]
\end{equation}

Observe that modifying a construction
from \cite{dup/wer2021} in order to create direct sums is futile: Letting, for an
odd number of finite 3 fields $F_1,\ldots,F_n$ (odd, in order to have a unit and, possibly, invertibility),
\begin{equation}
\bigoplus_{i=1}^n F_i=\set{(f_i)\in\bigoplus_{i=1}^n\mathcal{U}(F_i)}{\sum_{i=1}^n\partial\left(f_i\right)=1}
\end{equation}
where $\partial:\mathcal{U}(F)\to \mathbb{F}_2$ denotes the natural grading for the unital 3-field $F$,
it turns out that $(f_i)$ possesses an inverse iff $(f_i)^N=(1,\ldots,1)$ and it follows that
the set of invertible elements within $\bigoplus_{i=1}^n F_i$ equals $\bigtimes_{i=1}^n F_i$.

\subsection{Semi-direct Products and Unitization of Algebras}
We will base the notion of a semi-direct products of 3-fields on the concept of
a split short exact sequence of unital 3-fields $0\rightarrow J\rightarrow F\rightarrow G\rightarrow 0$.

\begin{definition}
The unital 3-field $F$ is the \emph{(internal) semi-direct} product of the ideal $J\sub\mathcal{Q}(F)$
and the subfield $G$ iff $G$ is the image of an epimorphism $\pi:F\to G$ with $\Ker\pi=J$
and right inverse $\iota:G\to F$.
\end{definition}

More abstractly, semi-direct products are connected to algebras over 3-fields.
Recall \cite{dup/wer2021}

\begin{definition}
Let $A$ be a binary ring and $F$ a unital 3-field. We call $A$ a binary algebra over $F$,
iff $F$ acts on $A$ in such a way that, for all $f,f_{1,2,3}\in F$ and $a,a_{1,2}\in A$,
\begin{enumerate}
  \item $f(a_1+a_2)=fa_1+fa_2$
  \item $(f_1+f_2+f_3)a=f_1a+f_2a+f_3a$
  \item $(f_1f_2)a=f_1(f_2a)$
  \item $1_Fa=a$
\end{enumerate}
Similarly, we will call $A$ a 3-algebra over $F$ iff $A$ is a 3-ring (with addition coming from
an underlying commutative 3-group) equipped with a binary product so that, for all
$f,f_{1,2,3}\in F$ and $a,a_{1,2}\in A$,
\begin{equation}
f(a_1+a_2+a_3)=fa_1+fa_2+fa_3
\end{equation}
while axioms (2)-(4) for a binary algebra are left untouched.
\end{definition}

Note that we do not assume $A$ to be unital.

\begin{example}
$\mathcal{Q}(F)$ is an $F$-algebra where the action of $F$ is given by $f_1q_{1,f}=q_{f_1,f_1f}$,
whenever $q_{1,f}\in\mathcal{Q}(F)$ and $f_1\in F$.
\end{example}

\begin{example}\label{standard semi-direct}
More generally,
if the morphism of unital 3-fields $\pi:F\to G$ possesses the right inverse $\sigma:G\to F$,
the ideal $J=\Ker\mathcal{Q}(\pi)$ is a G-algebra with $G$-action $g.j=\sigma(g)j$.
\end{example}

We will also need the fact that a binary $F$-algebra $A$ becomes a binary module over the
local ring $\mathcal{U}(F)$ by letting
\begin{equation}
q_{1,f}a=a+fa,\quad f\in F,\ a\in A.
\end{equation}
In case $A$ is a 3-algebra, $\mathcal{Q}(F)$ can only act on $\mathcal{Q}(A)$ via
\begin{equation}
q_{1,f}q_{a,b}=q_{a,b+fa+fb}.
\end{equation}
Semi-direct products for unital 3-fields will be related to the unitization of an
algebra over a unital 3-field. There are two variants, one with purely binary
operations, and another one for which addition becomes ternary.

\begin{definition}
Let $A$ be an algebra over the unital 3-field $F$. Then the \emph{binary unitization} of $A$ is defined
on the additive direct sum
\begin{equation}
A_F^{++}=\mathcal{U}(F)\oplus A
\end{equation}
with multiplication
\begin{equation}
(u_1,a_1)(u_2,a_2)=(u_1u_2,u_1a_2+u_2a_1+a_1a_2).
\end{equation}
\emph{Ternary unitization} is based on the additively written commutative 3-group $A_F^+=F\oplus A$
and a binary product which is equal to the one in the binary case.
\end{definition}

It is straightforward to check that $A_F^{++}$ is a binary unital ring, and $A^+_F$ is a unital 3-ring.

\begin{theorem}\label{nilpotent case}
The unitization $A^+$ of a nilpotent algebra $A$ over the unital 3-field $F$ is a unital 3-field,
and it follows that $A^+$ is a semi-direct product with canonical quotient map $A^+\to F$ and natural
split $F\rightarrow F\oplus 0$.
\end{theorem}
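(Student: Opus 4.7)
The strategy is first to reduce the 3-field condition on $A^+=A^+_F$ to the existence of binary multiplicative inverses. Since the product on $A^+$ is binary with unit $(1_F,0)$, the querelement equation $(f,a)\widehat{(f,a)}(f_1,a_1)=(f_1,a_1)$ is equivalent to $\widehat{(f,a)}$ being a two-sided binary inverse of $(f,a)$. The paragraph preceding the theorem already records that $A^+$ is a unital 3-ring, so the whole statement reduces to producing, for every $(f,a)\in F\oplus A$, an element $(g,b)\in F\oplus A$ with $(f,a)(g,b)=(1_F,0)$, plus a short check of the split exact sequence.

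For invertibility, I would use the factorization
\[
(f,a)=(f,0)\,(1_F,f^{-1}a),
\]
which follows directly from the product formula. The left factor is invertible with inverse $(f^{-1},0)$, since $f\in F$ is invertible in the 3-field $F$. For the right factor, set $c=f^{-1}a\in A$; nilpotency of $A$ gives $c^N=0$ for some $N$, so
\[
(1_F,c)^{-1}:=\bigl(1_F,\;-c+c^2-c^3+\cdots+(-1)^{N-1}c^{N-1}\bigr)
\]
is a well-defined element of $F\oplus A$ whose first coordinate genuinely lies in $F$. A direct expansion using the product formula $(u_1,a_1)(u_2,a_2)=(u_1u_2,u_1a_2+u_2a_1+a_1a_2)$ shows that $(1_F,c)(1_F,c)^{-1}=(1_F,0)$ by the standard geometric-series telescoping. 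Composing yields $\widehat{(f,a)}=(f^{-1},0)(1_F,c)^{-1}$, completing the 3-field verification.

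Once invertibility is established, the semi-direct product claim is essentially free. The projection $\pi\colon A^+\to F$, $\pi(f,a)=f$, is a surjective morphism of 3-fields; its extension $\mathcal{Q}(\pi)$ has kernel $J=\{(0,a):a\in A\}\subset\mathcal{Q}(A^+)$, which is the relevant binary ideal of $\mathcal{U}(A^+)$. The map $\iota\colon F\to A^+$, $\iota(f)=(f,0)$, is a 3-field morphism with $\pi\iota=\Id_F$, so the sequence $0\to J\to A^+\to F\to 0$ splits via $\iota$, matching the definition of an internal semi-direct product. The main obstacle I expect lies in the telescoping computation: one must verify that the $F$-action on $A$ and the product on $A$ interact compatibly enough for the cross-terms $u_1a_2+u_2a_1+a_1a_2$ to cancel as in a purely associative binary ring — a compatibility already implicit in the assertion that $A^+$ is a 3-ring, but the one place in the argument where something beyond formal bookkeeping is needed.
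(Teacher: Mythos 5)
Your proposal is correct and takes essentially the same route as the paper: since $A^+_F$ is already a unital 3-ring, the paper likewise reduces everything to invertibility, takes $(1,a)^{-1}=\bigl(1,\sum_{\nu=1}^{N}(-1)^{\nu}a^{\nu}\bigr)$ by nilpotency, and sets $(f,a)^{-1}=f^{-1}(1,f^{-1}a)^{-1}$, with the split sequence via $\pi(f,a)=f$ and $\iota(f)=(f,0)$ treated as immediate. The compatibility you flag at the end is exactly what the algebra axioms $(f_1f_2)a=f_1(f_2a)$ and $1_Fa=a$ provide, so there is no gap there.
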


\begin{proof}
Since $A^+$ always is a unital 3-ring we still have to show that each element $(f,a)$ has an inverse.
But, due to nilpotency of $A$,
\begin{equation}
(1,a)^{-1}=\left(1,\sum_{\nu=1}^N(-1)^\nu a^\nu\right)
\end{equation}
for $N$ large enough, and hence $(f,a)^{-1}=f^{-1}(1,f^{-1}a)^{-1}$.
\end{proof}

\begin{corollary}
For a finite unital 3-field $F_0$ a subfield $F$ of $F_0$, and an ideal $J\sub\mathcal{Q}(F_0)$ equipped with
the natural action of $F$, $A^+_F$ is a unital 3-field.
\end{corollary}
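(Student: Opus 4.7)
The plan is to reduce this to Theorem \ref{nilpotent case} applied to the $F$-algebra $A=J$, so the whole task boils down to showing that $J$, viewed as a binary ring, is nilpotent, and that the action of $F$ on $J$ inherited from multiplication in $\mathcal{U}(F_0)$ really makes $J$ into a binary algebra over $F$ in the sense of the preceding definition.

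For nilpotency I would argue as follows. Because $F_0$ is finite, each element of $\mathcal{Q}(F_0)$ has the unique form $q_{1,f}$ with $f\in F_0$, so $\mathcal{Q}(F_0)$ is finite and hence $\mathcal{U}(F_0)=\mathcal{Q}(F_0)\cup F_0$ is a finite local ring whose maximal ideal is precisely $\mathcal{Q}(F_0)$ (its residue field being $\mathbb{F}_2$, as recalled in the introduction). In any finite local ring the descending chain $\mathcal{Q}(F_0)\supseteq\mathcal{Q}(F_0)^2\supseteq\cdots$ stabilizes, and the stable ideal $I$ satisfies $I=\mathcal{Q}(F_0)\cdot I$, so Nakayama forces $I=0$. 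Thus some power $\mathcal{Q}(F_0)^N$ vanishes, and since $J\sub\mathcal{Q}(F_0)$ we get $J^N=0$, which is nilpotency in the form required by Theorem \ref{nilpotent case}.

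Next I would check the $F$-algebra structure on $J$. Since $F$ sits inside $\mathcal{U}(F_0)$ as units, the product $f\cdot j$ in $\mathcal{U}(F_0)$ lies in $J$ (because $J$ is a two-sided ideal of $\mathcal{U}(F_0)$), and the four axioms for a binary $F$-algebra are immediate consequences of the ring axioms in $\mathcal{U}(F_0)$: distributivity on both sides, associativity $(f_1f_2)j=f_1(f_2j)$, and $1_F\cdot j=j$. So $J$ is genuinely a (non-unital) binary $F$-algebra.

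Having established these two facts, Theorem \ref{nilpotent case} applies verbatim: the unitization $J^+_F=F\oplus J$, with the ternary addition inherited from its two commutative 3-group summands and the binary multiplication $(f_1,j_1)(f_2,j_2)=(f_1f_2,f_1j_2+f_2j_1+j_1j_2)$, is a unital 3-field, because every $(f,j)$ has an inverse, namely $f^{-1}(1,f^{-1}j)^{-1}$ with $(1,j')^{-1}=(1,\sum_{\nu=1}^{N}(-1)^\nu(j')^\nu)$. The only point that needs mild care is a notational one, namely that the ``$A$'' in the statement of the corollary is the ideal $J$ itself; once this identification is made, no new work beyond the nilpotency argument above is required, and that argument is the only substantive obstacle.
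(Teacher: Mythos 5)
Your proposal is correct and follows essentially the route the paper intends: the corollary is stated as an immediate consequence of Theorem~\ref{nilpotent case}, and you supply exactly the needed ingredients, namely that $\mathcal{U}(F_0)$ is a finite local ring with maximal ideal $\mathcal{Q}(F_0)$, so $\mathcal{Q}(F_0)$ (and hence the ideal $J=A$) is nilpotent, and that multiplication in $\mathcal{U}(F_0)$ gives $J$ the structure of a binary algebra over the subfield $F$. With these checks in place the application of Theorem~\ref{nilpotent case} is exactly as in the paper, so there is nothing to correct.
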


There are infinite unital 3-fields $F$ for which $Q(F)$ is not nilpotent so in order to find 3-fields
among unitizations we need to define invertibility before unitization.

\begin{definition}\label{hash-def}
An algebra $A$ over a unital 3-field $F$ is called a \emph{$\mathcal{Q}$-algebra} iff for each $a\in A$ there
is $a^\hash\in A$ such that $aa^\hash=a+a^\hash$.
\end{definition}

\begin{example}
The simplest example is an algebra $A$ over the 3-field $\TF(0)=\{1\}$. Equivalently,
$A$ is a binary ring such that $a+a=0$ for all $a\in A$ and, at the same time,
a (binary) algebra over $\operatorname{GF}(2)$. If also $a^2=0$ for all $a\in A$,
$A^+_{\TF(0)}$ is a unital 3-field in which $(1,a)^{-1}=(1,a)$ for all $a\in A$.
If the product of $A$ vanishes identically and if we select a basis $B$
for the vector space $A$, we find
\begin{equation}
A^+_{\TF(0)}\cong\TF(1)^{|B|}
\end{equation}
This example also covers the case in which $F$ acts trivially on $A$,
i.e.\ $fa=a$, for all $f\in F$, $a\in A$.
\end{example}

\begin{example}\label{hash-ex}
Each nilpotent algebra $A$ over the unital 3-field $F$ is a $\mathcal{Q}$-algebra with
\begin{equation}
a^\hash=-\sum_{\nu=1}^N a^\nu,
\end{equation}
where $a^{N+1}=0$.
\end{example}

\begin{lemma}\label{Q-unit}
An algebra $A$ over a unital 3-field $F$ is a $\mathcal{Q}$-algebra if and only if $A^+_F$ is a unital
3-field.
\end{lemma}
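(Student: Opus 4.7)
The plan is to use the fact, noted right after the definitions of the unitizations, that $A_F^+$ is always a unital 3-ring, so being a 3-field reduces to the existence of a multiplicative querelement for each $(f,a)\in A_F^+$. Since the product on $A_F^+$ is binary with unit $(1,0)$, a querelement for $(f,a)$ is nothing other than a two-sided binary inverse $(f,a)^{-1}$.

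First I would factor $(f,a)=(f,0)(1,f^{-1}a)$, which is immediate from the product formula, and note $(f,0)(f^{-1},0)=(1,0)$. So invertibility of $(f,a)$ in $A_F^+$ reduces to invertibility of $(1,b)$ for $b\in A$, where $b=f^{-1}a$ sweeps out all of $A$. A direct computation then gives $(1,b)(1,c)=(1,b+c+bc)$, so $(1,b)$ has an inverse $(1,c)$ precisely when $b+c+bc=0$. The substitution $a:=-b$, $a^\hash:=-c$ turns this equation into $aa^\hash=a+a^\hash$, which is exactly Definition \ref{hash-def}.

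Since $b\mapsto -b$ is an involution of $A$, the two directions of the lemma now read off. If $A$ is a $\mathcal{Q}$-algebra, every $b\in A$ yields $(1,b)^{-1}=(1,-(-b)^\hash)$, and combined with the above factorisation this gives inverses for every $(f,a)\in A_F^+$. Conversely, if $A_F^+$ is a 3-field, then for each $a\in A$ the inverse of $(1,-a)$ must be of the form $(1,d)$ (its $F$-component is forced to be $1$ from $1\cdot f'=1$), and setting $a^\hash:=-d$ satisfies the defining relation.

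The main---and essentially only---subtlety is the sign switch $b\leftrightarrow -a$ that bridges the inverse equation $b+c+bc=0$ and the $\mathcal{Q}$-algebra equation $aa^\hash=a+a^\hash$; once this is spotted, the remainder of the argument is routine manipulation of the product formula on $A_F^+$.
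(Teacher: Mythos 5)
Your proof is correct and follows essentially the same route as the paper: reduce, via the factorization $(f,a)=(f,0)(1,f^{-1}a)$, to inverting elements of the form $(1,b)$, and then match the inverse equation $b+c+bc=0$ with the defining identity $aa^\hash=a+a^\hash$. In fact your sign bookkeeping (the substitution $a=-b$, $a^\hash=-c$) is more careful than the paper's shortcut, which asserts $(1,q)(1,q^\hash)=(1,0)$ outright; since $(1,q)(1,q^\hash)=(1,2(q+q^\hash))$, that identity only holds when $2(q+q^\hash)=0$, whereas the correct inverse is $(1,-(-q)^\hash)$ --- exactly your formula, and the one consistent with the alternating series inverse in Theorem~\ref{nilpotent case}.
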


\begin{proof}
Suppose $A$ is a $\mathcal{Q}$-algebra. We must prove that each element is invertible, and, as in the proof
of Theorem~\ref{nilpotent case}, it suffices to show $(1,q)^{-1}$ exists. But it follows from the definition
of $q^\hash$ that $(1,q)(1,q^\hash)=(1,0)$.

Conversely, if $(1,q)$ has inverse $(f,\overline{q})$ it follows that $f=1$ and $q+\overline{q}+q\overline{q}=0$
so that $\overline{q}$ provides the $\hash$-element, providing $A$ with the structure of a $\mathcal{Q}$-algebra.
\end{proof}

\begin{theorem}
Let $F$ be a unital 3-field. There exists a bijective correspondence between the unitization of
$\mathcal{Q}$-algebras $A$ over $F$ and semi-direct products
of 3-fields $J\rtimes F$:
\begin{enumerate}
\item
For each $\mathcal{Q}$-algebra $A$ over $F$, the mapping
$\pi_{A,F}:A^+_F\to F$, $(f,a)\mapsto f$ establishes the short
exact sequence of unital 3-fields
\begin{equation}\label{Q-split}
0\longrightarrow A\longrightarrow A^+_F\longrightarrow F\longrightarrow 0
\end{equation}
which is split by $\sigma_{A,F}:f\mapsto (f,0)$ and so, $A^+_F=A\rtimes F$.
\item
Conversely, each split exact sequence $0\rightarrow J\rightarrow F_0\rightarrow F\rightarrow 0$
naturally defines the structure of a $\mathcal{Q}$-algebra over $F$ on $J$ and it follows
that $F_0=J\rtimes F$ is isomorphic to $J_F^+$.
\end{enumerate}
\end{theorem}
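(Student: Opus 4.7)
The plan is to unwind both halves of the correspondence by exhibiting the structural maps explicitly and then checking that the two constructions are mutually inverse.

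For part (1), I would first verify that $\pi_{A,F}:A^+_F\to F$, $(f,a)\mapsto f$, and $\sigma_{A,F}:f\mapsto(f,0)$ are 3-field morphisms with $\pi_{A,F}\circ\sigma_{A,F}=\Id_F$; this is immediate from the definition of the unitization product and the additive direct-sum structure of $F\oplus A$. Lemma~\ref{Q-unit} already guarantees that $A^+_F$ is a unital 3-field, so the remaining content is to identify the kernel of $\mathcal{Q}(\pi_{A,F})$ with $A$. One checks that the assignment sending $a\in A$ to the shift $q\in\mathcal{Q}(A^+_F)$ acting by $(f,b)\mapsto(f,b+a)$ is a ring isomorphism onto $\ker\mathcal{Q}(\pi_{A,F})$, where compatibility with the pointwise ring structure on $\mathcal{Q}(A^+_F)$ and with the $F$-action is a routine computation. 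Consequently \eqref{Q-split} is a split short exact sequence and $A^+_F=A\rtimes F$.

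For part (2), starting from a split short exact sequence $0\to J\to F_0\to F\to 0$ with section $\sigma:F\to F_0$, I would first invoke Example~\ref{standard semi-direct} to equip $J$ with an $F$-algebra structure via $f.j=\sigma(f)j$; the binary algebra axioms are inherited from the ring structure on $\mathcal{U}(F_0)$. Using the splitting, every $x\in F_0$ admits a unique decomposition $x=\sigma(\pi(x))+j$ with $j=x-\sigma(\pi(x))\in\ker\mathcal{Q}(\pi)=J$, so that $\Phi:J^+_F\to F_0$, $(f,j)\mapsto\sigma(f)+j$, is a bijection on underlying sets. Expanding the product $(\sigma(f_1)+j_1)(\sigma(f_2)+j_2)$ in $F_0$ and using that $\sigma$ is a ring homomorphism together with $f.j=\sigma(f)j$ shows that $\Phi$ intertwines the unitization product $(f_1,j_1)(f_2,j_2)=(f_1f_2,f_1.j_2+f_2.j_1+j_1j_2)$ with multiplication in $F_0$, and a similar check handles the ternary sum. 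Because $F_0$ is a 3-field every $(1,j)\in J^+_F$ is invertible, so Lemma~\ref{Q-unit} promotes $J$ to a $\mathcal{Q}$-algebra, with $j^\hash$ supplied by the $J$-component of $(1,j)^{-1}$. It remains to verify that the two constructions compose to the identity on each side.

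The main obstacle, I expect, is not the bijective content itself but the bookkeeping that reconciles the two viewpoints on $J$: on the one hand as an ideal inside $\mathcal{Q}(F_0)$ with binary ring structure inherited from $\mathcal{U}(F_0)$, on the other as an abstract $F$-algebra acted on through $\sigma$. In particular, one must check that $\sigma(f_1)j_2$ computed in $F_0$ coincides with the abstract action $f_1.j_2$ used in defining the unitization, and that the identifications of kernels with algebras respect all relevant structure. Once these compatibilities are in place, the inverse relationship between parts (1) and (2) reduces to a routine verification.
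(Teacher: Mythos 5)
Your proposal is correct and follows essentially the same route as the paper: part (1) rests on Lemma~\ref{Q-unit} together with the evident morphisms $\pi_{A,F}$ and $\sigma_{A,F}$, and part (2) uses the algebra structure from Example~\ref{standard semi-direct} and the same map $(f,q)\mapsto\sigma(f)+q$. The extra details you supply (explicit identification of $\ker\mathcal{Q}(\pi_{A,F})$ with $A$ via shifts, and extracting $j^{\hash}$ from invertibility of $1+j$ in $F_0$ through the converse direction of Lemma~\ref{Q-unit}) are sound elaborations of steps the paper leaves implicit.
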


\begin{proof}
By the definition of $A_F^+$ and Lemma~\ref{Q-unit}, $\pi_{A,F}$ and $\sigma_{A,F}$ are
morphisms of unital 3-fields giving rise to the split exact sequence $0\to A\to A^+_F\to F\to 0$.

In order to prove the second statement, denote by $\sigma:F\to F_0$ the right inverse to the
quotient morphism $\pi:F_0\to F$. Example~\ref{standard semi-direct} shows $J$ is a
$\mathcal{Q}$-algebra over $F$, and the mapping
\begin{equation}
J_F^+\longrightarrow F_0,\qquad (f,q)\longmapsto \sigma(f)+q
\end{equation}
is an isomorphism of unital 3-fields.
\end{proof}

\begin{example}
Let $F$ be a subfield of $F_0$ and $J$ an ideal of $\mathcal{U}(F_0)$. For each $q=q_{1,f}\in J$,
$q^\hash=q_{1,f^{-1}}$ is in $J$ since $q^\hash=-qq^\hash-q$. It follows that $J$ is a $\mathcal{Q}$-algebra
over $F$ and so $F\ltimes J$ is a unital 3-field.
\end{example}

\begin{example}
Let $F_0$ be a unital 3-field with unital subfield $F_1$ and let $F=F_0\times F_1$. Then
$\mathcal{Q}(F)=\mathcal{Q}(F_0)\oplus\mathcal{Q}(F_1)$ and $\pi_2(f_0,f_1)\to f_1$
is a surjective morphism with kernel
\begin{equation}
J=\set{q=q_{(1,1),(g_0,g_1)}\in\mathcal{Q}(F)}{0=\mathcal{Q}(\pi_2)(q)=q_{1,f_1}}
=\mathcal{Q}(F_0)\oplus\{0\}
\end{equation}
$\pi_2$ has the left inverse $\sigma:F_1\to F$, $\sigma(f_1)=(f_1,f_1)$ which produces
an action of $\diag F_1\times F_1\cong F_1$
on $J$ given by
\begin{equation}
(f_1,f_1)q_{1,f_0}=f_1q_{1,f_0}=q_{1,f_1+f_1f_0-1}
\end{equation}
$J$ is a $\mathcal{Q}$-algebra over $F_1$ with $q_{1,f_0}^\hash=q_{1,f_0^{-1}}$, and the morphism
$\Phi:J_{F_1}^+\to F_0\times F_1$,
\begin{equation}
\Phi(f_1\oplus q_{1,f_0})=(f_1,f_1+q_{1,f_0})=(f_1,1+f_0+f_1)
\end{equation}
is an isomorphism.
\end{example}

\subsection{Creating a 3-field action}
We complement the above with an intrinsic characterization of the binary rings $\mathcal{Q}(F)$, this
time without using the action of a unital 3-field. First an observation: The ring structure of $\mathcal{Q}(F)$
uniquely determines the underlying unital 3-field.

\begin{proposition}
Fix a unital 3-field $F_{1,2}$ are unital 3-fields and suppose $\Psi:Q(F_1)\to Q(F_2)$ is a binary ring morphism.
Define $\Phi: F_1\to F_2$ through $\Psi(q_{1,f})=q_{1,\Phi(f)}$. Then $\Phi$ is a unital 3-field morphism which
is an automorphism whenever $\Psi$ is.
\end{proposition}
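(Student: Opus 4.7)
The plan is to extend $\Psi$ to a unital ring morphism $\widetilde\Psi\colon\mathcal U(F_1)\to\mathcal U(F_2)$ and read off $\Phi$ as its restriction to units. Via the disjoint decomposition $\mathcal U(F)=\mathcal Q(F)\sqcup F$, define $\widetilde\Psi$ to agree with $\Psi$ on $\mathcal Q(F_1)$ and with $\Phi$ on $F_1$; the latter is unambiguous because every element of $\mathcal Q(F)$ has a unique representation of the form $q_{1,f}$.

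The heart of the argument is to verify that $\widetilde\Psi$ is a unital ring morphism. Additivity and multiplicativity are checked by case analysis on whether each argument is a unit or a non-unit. The two-non-unit case is precisely the morphism hypothesis on $\Psi$. The mixed cases translate, via the identification $q_{1,g}=1+g$ inside $\mathcal U(F)$, into the identities
\begin{equation*}
\Phi(1+a+b)=1+\Phi(a)+\Phi(b),\qquad \Phi(a+b+ab)=\Phi(a)+\Phi(b)+\Phi(a)\Phi(b),
\end{equation*}
which follow by applying $\Psi$ to the computations $q_{1,a}+q_{1,b}=q_{1,1+a+b}$ and $q_{1,a}\cdot q_{1,b}=q_{1,a+b+ab}$ in $\mathcal U(F)$ (each right-hand argument lying in $F$ by a residue check in $\mathcal U(F)/\mathcal Q(F)=\mathbb F_2$). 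The remaining two-unit case reduces to the compatibility $\Psi(q_{1,1_{F_1}})=q_{1,1_{F_2}}$, i.e.\ $\Phi(1_{F_1})=1_{F_2}$. Once this is in hand, $\widetilde\Psi$ is a unital morphism between local rings with residue field $\mathbb F_2$, so it maps units to units; hence $\Phi=\widetilde\Psi|_{F_1}$ automatically preserves the inherited binary product, ternary sum, and unit, making it a morphism of unital 3-fields.

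For the automorphism claim, applying the same construction to $\Psi^{-1}$ yields $(\widetilde\Psi)^{-1}$ and therefore $\Phi^{-1}$, upgrading $\Phi$ to an automorphism of unital 3-fields. The main technical hurdle is establishing $\Phi(1_{F_1})=1_{F_2}$: this amounts to $\Psi$ preserving the distinguished element $q_{1,1}=2\cdot 1_F$, a property not automatic from a bare morphism of the non-unital rings $\mathcal Q(F_i)$ and which must either be included tacitly in the hypothesis or be extracted from further features of $\Psi$ (for surjective $\Psi$, for example, evaluating the two displayed identities at $a=b=1$ forces $\Phi(1)^2=1$, and the annihilation constraint coming from comparison of the additive and multiplicative sides then pins down $\Phi(1)=1$).
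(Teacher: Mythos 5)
Your structural plan (extend $\Psi$ to a map $\widetilde\Psi$ on $\mathcal U(F_1)$ and check the cases) is workable, and you have correctly located the crux: nothing in the bare hypothesis forces $\Phi(1)=1$. But your proposed escape route is where the argument genuinely breaks: surjectivity, even bijectivity, of $\Psi$ does \emph{not} pin down $\Phi(1)=1$. Concretely, take $F_1=F_2=\TF(3)$, so that $\mathcal Q(F_i)$ is the ideal $\{0,2,4,6\}\sub\mathbb Z/8$, and let $\Psi(q)=3q$. This map is additive, and it is multiplicative because $\Psi(q)\Psi(q')-\Psi(qq')=2qq'\equiv 0\pmod 8$ (both $q,q'$ being even); it is in fact a ring automorphism of $\mathcal Q(\TF(3))$. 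The induced map is $\Phi(f)=3f+2$, so $\Phi(1)=5$, and $\Phi$ respects neither the unit, nor the ternary sum ($\Phi(1+1+1)=3$ while $\Phi(1)+\Phi(1)+\Phi(1)=7$), nor the product. So the ``annihilation constraint'' you appeal to cannot exist: your evaluation at $a=b=1$ correctly yields $\Phi(1)^2=1$, but no further consequence of the stated hypotheses can single out the root $1$, and no amount of case-checking in your construction of $\widetilde\Psi$ will supply it.

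What is missing is exactly the extra datum introduced in the paper right after this proposition: preservation of the $2$-unit $\tau=q_{1,1}$, i.e.\ $\Psi(q_{1,1_{F_1}})=q_{1,1_{F_2}}$ (equivalently $\Phi(1)=1$), which is part of the definition of a morphism of $\mathcal Q$-rings. Once this is assumed, your scheme does close up: from $\Psi(q+\tau)=\Psi(q)+\tau$ one gets $\Phi(c-2)=\Phi(c)-2$, which upgrades your shifted identity $\Phi(1+a+b)=1+\Phi(a)+\Phi(b)$ to full ternary additivity, and then comparing $\Phi(a+b+ab)=\Phi(a)+\Phi(b)+\Phi(a)\Phi(b)$ with additivity gives $\Phi(ab)=\Phi(a)\Phi(b)$; the automorphism claim follows by applying the same reasoning to $\Psi^{-1}$, as you indicate. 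For comparison, the paper does not pass through $\mathcal U(F)$ at all: it verifies the two identities directly and derives unitality from $0=\Psi(q_{1,\overline 1})=q_{1,\overline{\Phi(1)}}$, a step which already presupposes $\Phi(\overline 1)=\overline{\Phi(1)}$, i.e.\ additivity. So your instinct that unitality is the delicate point is sound; the fix, however, is the $\tau$-preservation hypothesis, not surjectivity.
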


\begin{proof}
Using the involved definitions, one checks that $\Phi$ respects addition and multiplication of $F$. Furthermore,
$0=\Psi(q_{1,\overline{1}})=q_{1,\overline{\Phi(1)}}$,
and so $\Phi$ is unital. If $\Psi$ is an automorphism
and $\Psi^{-1}(q_{1,a})=q(1,\widehat{\Phi}(a))$ then, necessarily $\widehat{\Phi}=\Phi^{-1}$.
\end{proof}

\begin{definition}
A commutative (binary) ring $Q$ is called a $\mathcal{Q}$-ring iff
\begin{enumerate}
  \item There exists a \emph{2-unit} $\tau\in Q$ so that $\tau q=q+q$ for all $q\in Q$.
  \item For each $q\in Q$ exists a unique \emph{$\hash$-element} $q^\hash\in Q$ with $q^\hash q=q+q^\hash$.
\end{enumerate}
The morphisms between $\mathcal{Q}$-rings are those ring morphisms that map the
respective $\tau$-elements onto each other and respect the $\hash$-involution.
\end{definition}

We note some consequences of these axioms:
\begin{enumerate}
  \item A $\mathcal{Q}$-ring is never unital, since we would have $1^\hash=1+1^\hash$ and so $1=0$.
  \item $\tau^\hash=\tau$ and $\tau^n=2^{n-1}\tau$
  \item There might be more than one 2-element: two such elements $\tau_{1,2}$ have to satisfy
  $(\tau_1-\tau_2)x=0$ for all $x\in Q$.
  \item Similarly, $q^\hash_{1,2}\in Q$ are $\hash$-elements for $q\in Q$ iff
  $(q_1^\hash-q_2^\hash)(q-1)=0$, within the unitization of $Q$. As we will see below, this
  determines the element $q^\hash$ uniquely.
\end{enumerate}

\begin{theorem}\label{Q-ring char}
$Q$ is a $\mathcal{Q}$-ring, iff there is a unital 3-field $F$ such that $Q=\mathcal{Q}(F)$.
\end{theorem}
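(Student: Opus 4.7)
The plan is to prove the two implications separately. For the forward direction, assume $F$ is a unital 3-field. I will exhibit the 2-unit $\tau := 1 + 1 = q_{1,1}$ and, for $q = q_{1,f} = 1 + f$, the $\hash$-element $q^\hash := q_{1, f^{-1}} = 1 + f^{-1}$. The axiom $\tau q = q + q$ is just distributivity in $\mathcal{U}(F)$, and
\[
qq^\hash = (1+f)(1+f^{-1}) = 2 + f + f^{-1} = (1+f) + (1+f^{-1}) = q + q^\hash.
\]
Uniqueness of $q^\hash$ is forced by the fact that $qx = q + x = qy$ implies $(q-1)(x-y) = 0$, and $q - 1 = f$ is a unit in $\mathcal{U}(F)$, so multiplication by it is injective on $\mathcal{Q}(F)$.

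For the reverse direction I will construct the candidate ambient local ring $\mathcal{U}$ from $Q$ directly. Set $\mathcal{U} := \mathbb{F}_2 \times Q$ with operations
\[
(e_1, q_1) + (e_2, q_2) := (e_1 + e_2,\, q_1 + q_2 + e_1 e_2 \tau), \qquad (e_1, q_1)(e_2, q_2) := (e_1 e_2,\, q_1 q_2 + e_1 q_2 + e_2 q_1),
\]
where $e_i q_j$ means $q_j$ or $0$ depending on whether $e_i = 1$ or $0$. These formulas are forced by the interpretations $(1, 0) = 1_\mathcal{U}$, $(0, q) = q$, and $1 + 1 = (0, \tau)$. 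Verifying the ring axioms is a routine case analysis; the only subtle step is distributivity when all three operands have first coordinate $1$, in which case the $\tau$-carry produced by adding two ``units'' and the doubled cross-terms $q + q$ appearing on the other side balance exactly thanks to the axiom $q + q = \tau q$.

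The crucial step is then to show $\mathcal{U}$ is local with maximal ideal $\{(0, q) : q \in Q\} \cong Q$ and residue field $\mathbb{F}_2$. This amounts to inverting each $(1, q')$, and here a direct rewriting of the $\hash$-axiom does the job:
\[
(1, -q)(1, -q^\hash) = (1,\, q q^\hash - q - q^\hash) = (1, 0) = 1_\mathcal{U},
\]
so $(1, -q)^{-1} = (1, -q^\hash)$, and every $(1, q')$ has this form since $Q$ is an additive group. By the local-ring/unital-3-field correspondence recalled in the introduction, $F := \mathcal{U}^\ast$ is then a unital 3-field with $\mathcal{U}(F) = \mathcal{U}$ and $\mathcal{Q}(F) = Q$ as rings. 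The prescribed $\tau$ is recovered via $1 + 1 = (0, \tau)$, and the $\hash$-element of each $q$ in $\mathcal{Q}(F)$ agrees with the original one by the uniqueness clause in the $\mathcal{Q}$-ring axioms. I expect the main obstacle to be identifying the correct unitization formula: once $\mathcal{U}$ is in place, the decisive local-ring property is immediate from the identity above.
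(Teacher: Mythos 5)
Your proposal is correct, and while the forward direction is exactly the paper's argument ($\tau=q_{1,1}$, $q_{1,f}^\hash=q_{1,f^{-1}}$; you even add the uniqueness check of $q^\hash$ via invertibility of $q-1=f$, which the paper leaves implicit), your converse takes a genuinely different route. The paper builds no ambient ring at all: it defines the 3-field structure directly on the set $Q$ by the shifted operations $f_1\hat{+}f_2\hat{+}f_3=f_1+f_2+f_3-\tau$ and $f\hat{\times}g=\tau-f-g+fg$, checks the axioms by hand (querelement $\tau-f$, unit $\tau$, inverse $f^\hash$), and thereby obtains $F$ with $\mathcal{Q}(F)=Q$ essentially by construction. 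You instead build the twisted unitization $\mathcal{U}=\mathbb{F}_2\times Q$ with carry $(1,0)+(1,0)=(0,\tau)$, verify it is a commutative ring (the balancing of the $\tau$-carry against $q+q=\tau q$ is indeed the only delicate point, and it works), show it is local with maximal ideal $0\times Q$ and residue field $\mathbb{F}_2$ by inverting $(1,-q)$ through $(1,-q^\hash)$, and then appeal to the correspondence of \cite{dup/wer2021} to get $F=\mathcal{U}^\ast$ and $\mathcal{Q}(F)\cong Q$. The two constructions agree: your $F=\{1\}\times Q$ is carried onto the paper's $Q$ by $(1,q)\longmapsto q+\tau$, under which your product $(1,a)(1,b)=(1,ab+a+b)$ becomes $\hat{\times}$. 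What your version buys is an explicit model of the enveloping local ring $\mathcal{U}(F)$ together with $F$, at the cost of the ring-axiom verification for $\mathbb{F}_2\times Q$ and of one step you should make explicit: to conclude $\mathcal{Q}(F)=Q$ you use not only that the units of a local ring with residue field $\mathbb{F}_2$ form a unital 3-field, but also the uniqueness of the enveloping ring, i.e.\ $\mathcal{U}(\mathcal{U}^\ast)\cong\mathcal{U}$ and hence $\mathcal{Q}(F)\cong$ the maximal ideal $0\times Q$; this is available in \cite{dup/wer2021} but is exactly what the paper's direct verification of the 3-field axioms on $Q$ avoids, making its proof the more elementary and self-contained of the two.
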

\begin{proof}
Suppose $Q=\mathcal{Q}(F)$. Then, letting $\tau=q_{1,1}$ and $q_{1,f}^\hash=q_{1,f^{-1}}$,
we have turned $\mathcal{Q}(F)$ into a $\mathcal{Q}$-ring.
Conversely, whenever $F$ is a $\mathcal{Q}$-ring,
define ternary addition $\hat{+}$ as well as binary multiplication $\hat{\times}$ by
\begin{equation}
f_1\hat{+}f_2\hat{+}f_3=f_1+f_2+f_3-\tau,\qquad
f\hat{\times}g=\tau-f-g+fg
\end{equation}
Then the querelement of $f\in F$ for $\hat{+}$ is $\overline{f}=\tau-f$, the distributive law holds since
\begin{equation}
f\hat{\times}(f_1\hat{+}f_2\hat{+}f_3)=2\tau-3f-f_1-f_2-f_3+ff_1+ff_2+ff_3=
f\hat{\times}f_1\hat{+}f\hat{\times}f_2\hat{+}f\hat{\times}f_3,
\end{equation}
also $\tau\hat{\times}f=\overline{\tau+f-\tau f}=\overline{\overline{f}}=f$, and $f^\hash\hat{\times}f=
\overline{f+f^\hash-ff^\hash}=\overline{0}=\tau$.
\end{proof}

\begin{corollary}\label{nilpot-char}
For a nilpotent ring $Q$ there exists a unital 3-field $F$ with $Q=\mathcal{Q}(F)$ iff $Q$ contains a
2-unit $\tau$.
\end{corollary}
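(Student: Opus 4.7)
The strategy I would follow is to reduce to Theorem~\ref{Q-ring char}, which already characterizes the binary rings of the form $\mathcal{Q}(F)$ as exactly the $\mathcal{Q}$-rings. From this perspective, the corollary asserts that a nilpotent (commutative) ring $Q$ is a $\mathcal{Q}$-ring if and only if it contains a 2-unit. The forward implication is essentially free, since the existence of a $\tau$ is one of the two defining axioms of a $\mathcal{Q}$-ring; thus if $Q=\mathcal{Q}(F)$, the element $\tau=q_{1,1}$ exhibited in the proof of Theorem~\ref{Q-ring char} does the job.

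For the converse, assume $Q$ is nilpotent and equipped with a 2-unit $\tau$. I would produce $\hash$-elements by importing the formula from Example~\ref{hash-ex}: for each $q\in Q$, choose $N$ with $q^{N+1}=0$ and set
\begin{equation}
q^\hash=-\sum_{\nu=1}^N q^\nu.
\end{equation}
A short telescoping computation yields $qq^\hash=-\sum_{\nu=2}^{N}q^\nu=q+q^\hash$, as required. Note that the 2-unit $\tau$ plays no role in the verification of this identity; it is needed only to qualify $Q$ as a $\mathcal{Q}$-ring at all, and ultimately to reproduce ternary addition on the associated 3-field.

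The subtler step, and the one I anticipate as the only real piece of work, is uniqueness of $q^\hash$. By the fourth remark following the definition of $\mathcal{Q}$-rings, any two candidate $\hash$-elements for $q$ satisfy $(q_1^\hash-q_2^\hash)(q-1)=0$ inside the binary unitization of $Q$. Because $q$ is nilpotent, $1-q$ is invertible in that unitization via the finite geometric series $\sum_{\nu=0}^{N}q^\nu$, so $q-1$ is a unit there and cancellation forces $q_1^\hash=q_2^\hash$. With existence and uniqueness of $\hash$-elements in place, $Q$ carries the structure of a $\mathcal{Q}$-ring, and Theorem~\ref{Q-ring char} delivers the desired unital 3-field $F$ with $Q=\mathcal{Q}(F)$.
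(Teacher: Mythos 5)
Your proposal is correct and follows exactly the route the paper intends: the corollary is stated as an immediate consequence of Theorem~\ref{Q-ring char}, with the forward direction given by $\tau=q_{1,1}$ and the converse by the nilpotent geometric-series construction of $q^\hash=-\sum_{\nu=1}^N q^\nu$ from Example~\ref{hash-ex}. Your explicit uniqueness argument (invertibility of $q-1$ in the unitization forces $q_1^\hash=q_2^\hash$) is a point the paper only gestures at in its remarks on the $\mathcal{Q}$-ring axioms, so including it is a welcome completion rather than a deviation.
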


\begin{example}
In the simplest case, when the product on a binary ring $R$ of characteristic 2 vanishes, each element
$\tau\in R$ can serve as a 2-element, while $r^\hash=-r$. For the resulting unital 3-field $F(R)_\tau$
we have
\begin{equation}
r\hat{+}s\hat{+}t=r+s+t+\tau\qquad r\hat{\times}s=r+s+\tau,
\end{equation}
and whatever the choice of $\tau\in R$, the mapping $\Phi_\tau: r\longmapsto r+\tau$ provides an isomorphism
between $F(R)_0$ and $F(R)_\tau$.
\end{example}

\begin{example}
The rings of pairs $Q(n)=\set{2k\in\mathbb{Z}/2^n}{k=0,\ldots,2^{n-1}-1}$ for the
3-fields $TF(n)=\set{2k-1\in\mathbb{Z}/2^n}{k=1,\ldots,2^{n-1}}$ are $\mathcal{Q}$-rings,
with $\tau=2$ and, taking the inverse inside $\mathbb{Z}/2^n$,
\begin{equation}
q^\hash=\frac{q}{q-1}.
\end{equation}
Similarly, $Q(\infty)=\set{p/q}{p=2r,\ q=2s+1,\ r,s\in\mathbb{Z}}$ is a $\mathcal{Q}$-ring with
$\tau=2$ and
\begin{equation}
(p/q)^\hash=\frac{p}{p-q}.
\end{equation}
Note that in these examples, $\tau$ is unique.
\end{example}

\begin{example}
If $A$ is a $\mathcal{Q}$-algebra over the unital 3-field $F$, the ideal $\mathcal{Q}(F)\oplus A$ of the
binary unitization $A_F^{++}$ is (still a $\mathcal{Q}$-algebra over $F$ but also) a $\mathcal{Q}$-ring,
with $\tau=(q_{1,1},0)$. The map
\begin{equation}
\Psi:\mathcal{Q}(A_F^+)\longrightarrow \mathcal{Q}(F)\oplus A,\qquad
q_{(1,0),(f,a)}\longmapsto\left(q_{1,f},a\right),
\end{equation}
establishes an isomorphism.
\end{example}

\section{3-field extensions of $\{1\}$}\label{galois section}

It could be argued that an extension of a 3-field $F_0$ should be any unital 3-field $F$
arising in a short exact sequence
\begin{equation}
0\longrightarrow J\longrightarrow F\longrightarrow F_0\longrightarrow 0,
\end{equation}
where $J$ is (identified with) an ideal of $\mathcal{U}(F)$. In this case, $F=F_0\times F_1$
would qualify as an extension of $F_0$, and the projection onto $F_0$ leads to the
short exact sequence
\begin{equation}
0\longrightarrow \mathcal{Q}(F_1)\times 0\longrightarrow F_0\times F_1\longrightarrow F_0\longrightarrow 0.
\end{equation}
We will nonetheless follow the established notion
and call the unital 3-field $F$ an extension of the unital 3-field $F_0$ in case $F_0$
embeds into $F$.

In the present situation the structure of all subfields is more complex: Since $\{1\}$
being a subfield of a unital 3-field $F$ is equivalent to $\chi(F)=1$, these extensions coincide
with all unital 3-fields of characteristic 1. The extensions we will consider here belong to the
following class.

\begin{definition}
Fix a unital 3-field $F$ as well as natural numbers $n_1,\ldots,n_k\in\mathbb{N}$ and put
\begin{equation}
F(n_1,\ldots,n_k)=\set{1+\sum_{0\neq\alpha}\eps_\alpha(x-1)^\alpha}{\eps_\alpha\in\mathbb{F}_2,\ (x-1)^{n_\kappa}=0,\ \kappa=1,\ldots,k}.
\end{equation}
\end{definition}

Note that by using semi-direct sums we may reduce the number of variables in this example:
Consider the map $x_i\longmapsto 1$ and extend it to an epimorphism $\pi_i$ on $F(n_1,\ldots,n_k)$,
\begin{equation}
1+\sum_{0\neq\alpha}\eps_\alpha(x-1)^\alpha\longmapsto
1+\sum_{0\neq\alpha,\ \alpha_i=0}\eps_\alpha(x-1)^\alpha
\end{equation}
with image $F(n_1,\ldots,n_{i-1},n_{i+1},\ldots n_k)$ and kernel consisting of the ideal
\begin{equation}
\mathfrak{J}_i=(x_i-1)\sum_\alpha\eps_\alpha(x-1)^\alpha.
\end{equation}
The elements of $F(n_1,\ldots,n_{i-1},n_{i+1},\ldots n_k)$ embed naturally into $F(n_1,\ldots,n_k)$
and act on $\mathfrak{J}_i$ by multiplication so that
\begin{equation}
F(n_1,\ldots,n_k)=F(n_1,\ldots,n_{i-1},n_{i+1},\ldots n_k)\ltimes\mathfrak{J}_i
\end{equation}

A slightly more sophisticated way of writing down these 3-field extensions is obtained in the following way.
\begin{definition}
Fix a finite Abelian binary group $G$ as well
as a local (binary) ring $R$ with residual field $\mathbb{F}_2=\{0,1\}$. Consider the group algebra $RG$
over $R$. Then
\begin{equation}
F_G=\set{f\in RG}{f(0)\in R^\ast}
\end{equation}
is called the \emph{ternary group algebra of $G$ over the 3-field $R^\ast$}.
\end{definition}
\begin{theorem}
$F_G$ is a 3-field, extending $R^\ast$, and, in case $R=\{0,\ldots,2^n-1\}=\mathcal{U}(1,3,\ldots,2^n-1)$,
each finite unital field extensions of the unital 3-field $\{1,3,\ldots,2^n-1\}$ which is contained in
an extension generated by a single elements is isomorphic to one of these fields.
\end{theorem}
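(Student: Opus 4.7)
The plan is to separate the statement into two tasks: showing $F_G$ is always a unital 3-field extending $R^\ast$, and classifying the finite 3-field extensions of $\TF(n)$ sitting inside a singly generated one.

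For the first, I would argue that $RG$ is a local binary ring with residue field $\mathbb F_2$, which requires---and I take this to be implicit in the setup---that $G$ be a finite abelian $2$-group (for a non-$2$-group such as $\mathbb Z/3$ one checks directly that $(RG)^\ast$ is not closed under ternary addition). Reduction modulo the maximal ideal of $R$ yields $\mathbb F_2 G$, which in characteristic two for a $2$-group has a nilpotent augmentation ideal and is thus local with residue field $\mathbb F_2$; lifting nilpotents then gives the same conclusion for $RG$. The condition $f(0)\in R^\ast$ reads as saying that the image of $f$ in the residue field is nonzero, identifying $F_G$ with $(RG)^\ast$, and the general correspondence between local binary rings of residue $\mathbb F_2$ and unital 3-fields recalled at the start of the paper makes this a unital 3-field. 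The diagonal inclusion $R\hookrightarrow RG$ then restricts to an embedding $R^\ast\hookrightarrow F_G$.

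For the second claim, let $F\subseteq F'=\TF(n)(\alpha)$ be as described and set $R=\mathcal U(\TF(n))=\mathbb Z/2^n$, so that $\mathcal U(F')=R[\alpha]$ is a finite local $R$-algebra with residue field $\mathbb F_2$. Since $\alpha$ reduces to $1\in\mathbb F_2$, it is a one-unit; appealing to the standard fact that one-units in a finite commutative local ring of residue characteristic two have order a power of two, we get $\alpha^{2^s}=1$ for some minimal $s\ge 0$. Sending a generator of $G=\mathbb Z/2^s$ to $\alpha$ yields a surjective $R$-algebra morphism $RG\twoheadrightarrow\mathcal U(F')$. When this surjection is an isomorphism, $F'\cong F_G$ and we are done; for a general sub-3-field $F\subseteq F'$, the corresponding local subalgebra of $R[\alpha]$ is generated over a subring $R'=\mathbb Z/2^{n'}\le R$ by a suitable power of $\alpha$ (again a one-unit of $2$-power order), so by the same argument $\mathcal U(F)\cong R'[H]$ for a cyclic $2$-subgroup $H\le G$, and hence $F\cong F_H$.

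The step I expect to be the main obstacle is justifying that the surjection $RG\twoheadrightarrow\mathcal U(F')$ is actually an isomorphism, i.e.\ ruling out any relations in $R[\alpha]$ beyond the cyclic one forced by $\alpha^{2^s}=1$. This comes down to a comparison of $\mathbb F_2$-dimensions using the minimality of $s$ and the nilpotency index of $\alpha-1$; it is plausibly where an additional structural hypothesis hidden in the phrase \emph{generated by a single element} must enter, for otherwise one could take $\mathcal U(F')=\mathbb F_2[y]/y^3$, whose unit group has cardinality $4$ and is not of the form $F_G$ for any abelian $2$-group $G$. A closely related difficulty, needed for the descent to subfields, is the classification of local $R'$-subalgebras of $R[\mathbb Z/2^s]$ with residue field $\mathbb F_2$; here one exploits that any one-unit generator of such a subalgebra already has $2$-power order, so the same cyclic-$2$-group picture reproduces itself inside $G$.
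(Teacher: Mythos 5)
The paper states this theorem without proof, so there is no argument of the authors to measure yours against; your proposal has to stand on its own, and as it stands it does not close. The first half is essentially fine: reading $f(0)\in R^\ast$ as the condition that the residue (augmentation) of $f$ is a unit, and adding the implicit but necessary hypothesis that $G$ is a $2$-group, $RG$ is local with residue field $\mathbb{F}_2$, so $F_G=(RG)^\ast$ is a unital 3-field into which $R^\ast$ embeds diagonally. Your observation that the $2$-group hypothesis cannot be dropped is correct (for $G=\mathbb{Z}/3$ and $R=\mathbb{F}_2$ one has $\mathbb{F}_2G\cong\mathbb{F}_2\times\mathbb{F}_4$ and the units are not closed under ternary addition), and your reading of $f(0)$ is the only tenable one, since with the literal coefficient-at-identity reading already $1+t\in\mathbb{F}_2[\mathbb{Z}/2]$ would lie in $F_G$ without being invertible.

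The second half is where the theorem actually lives, and there you do not have a proof: you build the surjection $R[\mathbb{Z}/2^s]\twoheadrightarrow R[\alpha]=\mathcal{U}(F')$ and then explicitly leave open whether it is injective. This is not a removable technicality. The field $F_0(3)$, with $\mathcal{U}(F_0(3))=\mathbb{F}_2[y]/(y^3)$ of $\mathbb{F}_2$-dimension $3$, is itself generated by the single element $1+y$, yet no $F_G$ over $R=\mathbb{F}_2$ can be isomorphic to it because $\dim_{\mathbb{F}_2}\mathbb{F}_2G=|G|$ is a power of $2$; similarly the unit 3-field of $(\mathbb{Z}/8)[y]/(y^2,2y)$ is an extension of $\TF(3)$ generated by $1+y$, of cardinality $8$, while every proper $F_G$ over $\mathbb{Z}/8$ has at least $32$ elements. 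So under the literal reading the classification claim fails, and any proof must either reinterpret \emph{generated by a single element} (for instance as generated by a single group-like element of $2$-power multiplicative order, i.e.\ subfields of $F_{\mathbb{Z}/2^s}$) or add hypotheses; you flag exactly this but do not resolve it. Your descent to subfields is also unsupported: it is not true that a sub-3-field of a singly generated extension corresponds to a subalgebra generated over some $\mathbb{Z}/2^{n'}$ by a power of $\alpha$ --- the paper's own later classification shows that the subfields of $F_0(n)$ are the fields $\langle 1+(x-1)^{g_1},\ldots,1+(x-1)^{g_k}\rangle$, which in general require several generators (e.g.\ the subfield of $F_0(7)$ generated by $1+(x-1)^2$ and $1+(x-1)^3$), so the conclusion $\mathcal{U}(F)\cong R'[H]$ with $H$ cyclic does not follow from your argument.
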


\subsection{Extensions of characteristic 0, generated by a single element}
We are going to consider extensions of $F_0=\TF(0)=\{1\}$, with prime field identical to $\TF(0)$,
and generated by a single element.

\begin{theorem}\label{singly-gen-char}
Any finite unital 3-field $F$ of characteristic 1 which is generated by a single element $x$ is isomorphic
to $F_0(n_P)$ where $n_P$ is the smallest natural number with $(1-x)^n=0$. If $\jmath(P)=\min\set{i}{\eta_i\neq 0}$ then
$n_P=\lceil n/k\rceil$.
\end{theorem}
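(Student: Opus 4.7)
The plan is to identify the associated binary local ring $\mathcal{U}(F)$ with a truncated polynomial algebra over $\mathbb{F}_2$ and read off $F$ as its group of units. Since $F$ has characteristic $1$, the prime field is $\TF(0)=\{1\}$, so the canonical embedding $\mathbb{F}_2=\mathcal{U}(\{1\})\hookrightarrow\mathcal{U}(F)$ turns $\mathcal{U}(F)$ into a finite local $\mathbb{F}_2$-algebra with residue field $\mathbb{F}_2$. Its maximal ideal $\mathfrak{m}$ is therefore nilpotent and $F=1+\mathfrak{m}$, so that $t=x-1\in\mathfrak{m}$ is nilpotent; let $n_P$ be the least exponent with $t^{n_P}=0$.

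Next I would consider the $\mathbb{F}_2$-algebra morphism $\phi:\mathbb{F}_2[T]\to\mathcal{U}(F)$ sending $T\mapsto t$, whose image is the subring $R\sub\mathcal{U}(F)$ generated by $x$. The kernel clearly contains $(T^{n_P})$; for the reverse inclusion, factor $p(T)\in\Ker\phi$ as $T^k q(T)$ with $q(0)=1$, so $q(t)\in 1+\mathfrak{m}$ is a unit of $\mathcal{U}(F)$, forcing $t^k=0$ and hence $k\ge n_P$. Therefore $R\cong\mathbb{F}_2[T]/(T^{n_P})$, whose unit group is exactly
\begin{equation*}
R^\ast=\set{1+\sum_{i=1}^{n_P-1}\eps_i(x-1)^i}{\eps_i\in\mathbb{F}_2}=F_0(n_P).
\end{equation*}

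To conclude the isomorphism, I would verify that $R^\ast$ is a 3-subfield of $F$: multiplicative closure and the existence of (multiplicative) querelements are automatic for a unit group, while the ternary sum of $1+m_1,1+m_2,1+m_3\in R^\ast$ equals $1+(m_1+m_2+m_3)\in R^\ast$ by characteristic $2$. Since $x\in R^\ast$ and $F$ is generated by $x$ as a 3-field, $F\sub R^\ast$; the reverse inclusion $R^\ast\sub\mathcal{U}(F)^\ast=F$ is immediate. Hence $F=R^\ast\cong F_0(n_P)$.

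For the formula $n_P=\lceil n/k\rceil$, when $x-1$ is represented by a polynomial $P=\sum_i\eta_iT^i$ inside an ambient truncated algebra $\mathbb{F}_2[T]/(T^n)$ with $\jmath(P)=k$, one writes $P=T^kQ$ with $Q(0)=1$, so $Q$ is a unit there and $P^m=T^{km}Q^m$ vanishes modulo $T^n$ precisely when $km\ge n$. The main obstacle in the whole argument is the determination of $\Ker\phi$ above; everything else rests on the routine remark that a polynomial in $t$ with nonvanishing constant term is a unit in the local ring $\mathcal{U}(F)$.
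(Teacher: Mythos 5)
Your argument is correct, but it runs in the opposite direction from the paper's. The paper presents $F$ ``from above'': it realizes a singly generated $F$ as a quotient $F_0[x]/\langle P\rangle$ of the ternary polynomial ring, uses that $\mathcal{U}F_0[x]=\mathbb{F}_2[x]$ is principal, and then excludes generators of the form $P=(x-1)^n\bigl(1+(x-1)^k+\ldots\bigr)$ by the ideal criterion for a quotient to be a 3-field (the larger ideal would meet $F_0[x]$, contradicting Theorem~1 of \cite{dup/wer2021}), forcing $P=(x-1)^{n_P}$. You instead work ``from below'' inside $\mathcal{U}(F)$: finiteness and characteristic $1$ make $\mathcal{U}(F)$ a finite local $\mathbb{F}_2$-algebra with nilpotent maximal ideal and $F=1+\mathfrak{m}$, the subring generated by $x$ is identified with $\mathbb{F}_2[T]/(T^{n_P})$ via the kernel computation $p=T^kq$, $q(t)$ a unit, and its unit group is a unital 3-subfield containing $x$, hence equals $F$ by minimality of the generated subfield. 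This bypasses both the free-object presentation and the PID/ideal-criterion step, at the price of leaning directly on the structure theory $F=\mathcal{U}(F)^\ast$ (local, residue field $\mathbb{F}_2$); the paper's route has the side benefit of producing the presentation $F\cong F_0[x]/\langle(x-1)^{n_P}\rangle$, which is reused later for the ideal structure and the endomorphism correspondence. Two small points you should make explicit: the ring isomorphism $R\cong\mathbb{F}_2[T]/(T^{n_P})$ restricts to a 3-field isomorphism of unit groups because the ternary sum and product are defined through the binary ring operations, and closure of $R^\ast$ under inversion taken in $\mathcal{U}(F)$ follows since the inverse already exists in the finite local ring $R$ and inverses are unique. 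Your closing computation $P^m=T^{km}Q^m=0$ iff $km\geq n$, giving $n_P=\lceil n/k\rceil$, actually supplies a justification for the second assertion of the theorem that the paper's own proof leaves implicit.
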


\begin{proof}
Consider the polynomial 3-ring $F_0[x]=\set{1+\sum_{\nu=1}^n\eps_\nu(x-1)^\nu}{\eps_\nu\in\mathbb{F}_2}$
for which $\mathcal{U}F_0[x]$ is the principal domain $\mathbb{F}_2[x]$. Consequently, whenever $F$ is
generated by a single element, there is a polynomial $P\in\mathcal{Q}F_0[x]=
\set{\sum_{\nu=0}^n\eps_\nu(x-1)^\nu}{\eps_\nu\in\mathbb{F}_2}$ such that $F=F_0[x]/\langle P\rangle$.
The polynomial $P$ cannot be of the form
\begin{equation}
P=(x-1)^n\left(1+(x-1)^k+\sum_{\nu=k+1}^N\pi_\nu(x-1)^\nu\right),\qquad \pi_\nu\in\mathbb{F}_2,
\end{equation}
because then $\langle 1+(x-1)^k+\sum_{\nu=k+1}^N\pi_\nu(x-1)^\nu\rangle$ would be strictly larger than $\langle P\rangle$,
intersect $F_0[x]$ and thus contradict \cite{dup/wer2021}[Theorem 1]. So, $P=(x-1)^n$ for the smallest $n$ with $(x-1)^n=0$ within
the 3-field $F$.
\end{proof}

Any attempt at creating a theory related to classical Galois theory has to deal with a number
of obstacles, one of them, for example, the less useful factorization of polynomials: In
$F_0(4)$, for example, the polynomial $1+(x-1)+(x-1)^3=\left[1+(x-1)\right]\left[1+(x-1)^3\right]$
vanishes at 1 when using the right hand representation, while it takes the value 1 when 1 is plugged into
$1+(x-1)+(x-1)^3$.

\begin{proposition}\label{ideals in F_0(n)}
The ideals of $F_0(n)$ are
\begin{equation}
\mathfrak{I}_k=\set{\sum_{\nu\geq k}\eps_\nu(x-1)^\nu}{\eps_\nu\in\mathbb{F}_2},
\qquad k=1,\ldots,n-1
\end{equation}
Consequently, $F_0(n)$ never is a Cartesian product of 3-fields.
\end{proposition}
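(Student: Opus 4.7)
The plan is to identify $\mathcal{U}(F_0(n))$ explicitly and then read the ideal lattice off that identification. Setting $y=x-1$, the defining relation $(x-1)^n=0$ together with Theorem~\ref{singly-gen-char} shows that $\mathcal{U}(F_0(n))$ is the truncated polynomial ring $\mathbb{F}_2[y]/\langle y^n\rangle$. Inside it, $F_0(n)$ sits as the elements of constant term $1$ and $\mathcal{Q}(F_0(n))$ is realised as the maximal ideal $\langle y\rangle$.

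Next I would classify all binary ideals of $\mathbb{F}_2[y]/\langle y^n\rangle$. Because this ring is local, every non-zero element factorises as $y^k(\eps_k+\eps_{k+1}y+\cdots)$ with $\eps_k\neq 0$, the right-hand factor being a unit since it has non-zero constant term. Consequently every non-zero ideal is principal and of the form $\langle y^k\rangle$ for a unique $k\in\{0,\ldots,n-1\}$. Intersecting with $\mathcal{Q}(F_0(n))$ (which discards $k=0$, the only power that contains a unit) and excluding the zero ideal, the ternary ideals of $F_0(n)$ in the paper's sense are exactly the $\mathfrak{I}_k=\langle y^k\rangle$ for $k=1,\ldots,n-1$.

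For the Cartesian product part I would invoke Theorem~\ref{cart-product-char}: a non-trivial factorisation $F_0(n)\cong F_1\times F_2$, with both factors strictly larger than $\{1\}$, would translate into a binary ring decomposition $\mathcal{Q}(F_0(n))=Q_1\oplus Q_2$ with $Q_1,Q_2$ non-zero ideals satisfying $Q_1\cap Q_2=0$. But the $\mathfrak{I}_k$ above form a descending chain $\mathfrak{I}_1\supset\mathfrak{I}_2\supset\cdots\supset\mathfrak{I}_{n-1}$, so any two non-zero ideals meet non-trivially, contradicting $Q_1\cap Q_2=0$.

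The main obstacle I foresee is purely a matter of bookkeeping: one has to stay faithful to the paper's convention that ``ideals of $F_0(n)$'' are the binary ideals of $\mathcal{U}(F_0(n))$ lying in $\mathcal{Q}(F_0(n))$, and one has to explicitly exclude the trivial product $F_0(n)\cong F_0(n)\times\{1\}$, which corresponds formally to $Q_1=0$. Once these conventions are pinned down, the entire argument reduces to the standard ideal theory of the truncated polynomial ring $\mathbb{F}_2[y]/\langle y^n\rangle$, and no genuinely difficult step remains.
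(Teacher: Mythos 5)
Your proposal is correct and follows essentially the same route as the paper: identify $\mathcal{U}F_0(n)$ with the truncated polynomial ring $\mathbb{F}_2[y]/\langle y^n\rangle$, write every nonzero element as a power of $y=x-1$ times a unit to get the chain of ideals $\mathfrak{I}_k$, and then rule out a proper decomposition $\mathcal{Q}(F_0(n))=Q_1\oplus Q_2$ via Theorem~\ref{cart-product-char} because any two nonzero ideals intersect nontrivially. The only cosmetic difference is that the paper first notes explicitly that an ideal of the non-unital ring $\mathcal{Q}F_0(n)$ is automatically an ideal of $\mathcal{U}F_0(n)$ (since $q(1+r)=q+qr$), a one-line point your convention handles implicitly.
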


\begin{proof}
Since the ring $\mathcal{U}F_0(n)=\set{\sum_{\nu=0}^{n-1}\eps_\nu(x-1)^\nu}{\eps_i\in\mathbb{F}_2}$ is principal,
for each proper ideal $\mathfrak{I}$ in $\mathcal{Q}F_0(n)$ the ideal
\begin{equation}
<\mathfrak{I},F_0(n)>=\set{\sum q_i(1+r_i)}{q_i\in\mathfrak{I},\ r_i\in\mathcal{Q}F_0(n)}
\sub\mathfrak{I}+\mathfrak{I}
\end{equation}
is the same as $\mathfrak{I}$ and so $\mathfrak{I}=<P_0>$, with $P_0\in\mathcal{Q}F_0(n)$.
Since each $P\in\mathcal{Q}F_0(n)$ can be written $P=(x-1)^s(1+R)$, $(1+R)$ invertible, $1\leq s\leq n-1$,
it follows that the ideals of $\mathcal{Q}F_0(n)$ are precisely those of the form $\mathfrak{I}_k=<(x-1)^k>$.
But $\mathfrak{I}_s\cap\mathfrak{I}_t=\mathfrak{I}_{\max\{s,t\}}$,
$\mathfrak{I}_s+\mathfrak{I}_t=\mathfrak{I}_{\min\{s,t\}}$ and, by applying Theorem~\ref{cart-product-char},
we conclude that $F_0(n)$ never is a proper Cartesian product of 3-fields.
\end{proof}

Whether or not $F_0(n)$ can be written as a semi-direct product is a slightly more delicate question, and
we will return to it elsewhere.

\begin{lemma}\label{morphisms}
Denote by $\varphi_n:F_0(n)\to F_0(n)$ the Frobenius morphism $P\longmapsto P^2$, by $F_0(n)^2$
its image, and, for $k\geq 2$ let $\mu_{n,k}:F_0(n)\to F_0(k)$ be defined by
\begin{equation}
\mu_{n,k}\left(1+\sum_{i=1}^{n-1}\eps_i(x-1)^i\right)=1+\sum_{i=1}^{k-1}\eps_i(x-1)^i.
\end{equation}
\begin{enumerate}
  \item
  $\varphi_n$ and $\mu_{n,k}$ are morphisms,
\begin{equation}
  \Ker\varphi_n=\mathfrak{I}_{\lceil n/2\rceil},\qquad\text{and}\qquad\Ker\mu_{n,k}=\mathfrak{I}_k
\end{equation}
  \item
  The product on $\Ker\varphi_n$ vanishes identically so that $(1+P_1)(1+P_2)=1+P_1+P_2$.
\end{enumerate}
\end{lemma}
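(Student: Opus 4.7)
The plan is to exploit the concrete model $\mathcal{U}(F_0(n))\cong\mathbb{F}_2[y]/(y^n)$, where $y=x-1$. Under this identification $F_0(n)=1+y\cdot\mathbb{F}_2[y]/(y^n)$ is the coset of polynomials with constant term $1$, $\mathcal{Q}(F_0(n))=y\cdot\mathbb{F}_2[y]/(y^n)$ consists of those with constant term $0$, and the ideals of Proposition~\ref{ideals in F_0(n)} become $\mathfrak{I}_k=(y^k)$. I would use throughout the fact, emphasized in the introductory discussion, that the ternary sum $f_1+f_2+f_3$ on $F_0(n)$ coincides with the binary sum inherited from $\mathcal{U}(F_0(n))$, so that any binary ring endomorphism of $\mathcal{U}(F_0(n))$ that restricts to $F_0(n)$ automatically yields a morphism of unital 3-fields.

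To handle the morphism claims in (1), I would extend each of $\varphi_n$ and $\mu_{n,k}$ to a binary ring morphism on $\mathcal{U}(F_0(n))$. The map $\varphi_n$ is the Frobenius $a\mapsto a^2$; in characteristic $2$ the cross terms in $(a+b)^2$ disappear, so this is a ring morphism, it fixes $1$, and on elements $1+yq$ it produces $1+y^2q^2$, hence its restriction lands back in $F_0(n)$. The map $\mu_{n,k}$ is simply the canonical surjection $\mathbb{F}_2[y]/(y^n)\to\mathbb{F}_2[y]/(y^k)$, which visibly sends elements of constant term $1$ to elements of constant term $1$.

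The kernels then follow by a direct computation in the polynomial model. Writing $a=\sum_{i\geq 0}\alpha_i y^i\in\mathbb{F}_2[y]/(y^n)$, one has $a^2=\sum_{i\geq 0}\alpha_i y^{2i}$, which vanishes in $\mathbb{F}_2[y]/(y^n)$ iff $\alpha_i=0$ for every $i$ with $2i<n$, that is iff $a\in(y^{\lceil n/2\rceil})=\mathfrak{I}_{\lceil n/2\rceil}$. The kernel of $\mu_{n,k}$ is obviously $(y^k)=\mathfrak{I}_k$. For part (2), any $P_1,P_2\in\mathfrak{I}_{\lceil n/2\rceil}$ lie in $(y^{\lceil n/2\rceil})$, so $P_1P_2\in(y^{2\lceil n/2\rceil})\sub(y^n)=\{0\}$; hence $(1+P_1)(1+P_2)=1+P_1+P_2+P_1P_2=1+P_1+P_2$.

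No serious obstacle arises; the only mild point is the trivial inequality $2\lceil n/2\rceil\geq n$, which has to be observed for odd $n$ in order to conclude $P_1P_2=0$. The entire lemma is a translation exercise once the polynomial model of $\mathcal{U}(F_0(n))$ is installed.
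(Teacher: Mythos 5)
Your proof is correct and takes essentially the same (and really the only natural) route as the paper, whose proof of this lemma is just the one-line remark that both maps are ``well-known (and easily seen to be) morphisms.'' Your explicit work in the model $\mathcal{U}(F_0(n))\cong\mathbb{F}_2[y]/(y^n)$ --- identifying $\Ker\varphi_n$ with $(y^{\lceil n/2\rceil})$, $\Ker\mu_{n,k}$ with $(y^k)$, and using $2\lceil n/2\rceil\geq n$ for part (2) --- merely supplies the kernel computations and the vanishing-product argument that the paper leaves implicit.
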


\begin{proof}
\textbf{(1)}
Both maps are well-known (and easily seen to be) morphisms. Furthermore, the image of $\varrho_n$ can
naturally be embedded into $F_0(n)$.
\end{proof}

The additive structure of $F_0(n)$ is most transparent on $\mathcal{U}F_0(n)$, which is a (binary) vector
space with basis $\set{(x-1)^k}{k=0,\ldots,n-1}$. The multiplicative structure of these fields is the following.

\begin{theorem}\label{generate_mult}
Each element $f\in F_0(n)$ has a uniquely defined factorization $f=\gamma_0^{\alpha_0}\ldots\gamma_K^{\alpha_K}$
where
\begin{equation}
\gamma_k=1+(x-1)^{2k+1}\qquad 0\leq 2k+1\leq n-1,
\end{equation}
and it follows that the multiplicative group underlying $F_0(n)$ is isomorphic to the direct product
$C_{n,0}\times\ldots\times C_{n,K_n}$ of the cycles $C_{n,k}$ of the $\gamma_k$, with $K_n=\max\set{k}{2k+1\leq n-1}$,
and $C_{n,k}\cong\mathbb{Z}/2^{s(n,k)}$, $s(n,k)=\min\set{2^t}{2^tk\geq n}$. Consequently, with respect to its
multiplicative structure,
\begin{equation}
F_0(n)\cong\prod_{0\leq 2k+1\leq n}\left(\mathbb{Z}/2^{s(n,k)}\right)^{r(n,k)},\qquad
\end{equation}
\end{theorem}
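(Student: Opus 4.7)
The plan is to work in the ambient ring $\mathcal{U}F_0(n)\cong\mathbb{F}_2[y]/(y^n)$ with $y=x-1$, using that $F_0(n)=1+(y)$ is the principal unit group and that characteristic two supplies the Frobenius identity $(1+a)^2=1+a^2$. As a first step, iterating Frobenius yields $\gamma_k^{2^t}=1+y^{2^t(2k+1)}$, which is the unit only when $2^t(2k+1)\geq n$. Thus the order of $\gamma_k$ equals $2^{s(n,k)}$, where $s(n,k)$ is the least $s$ with $2^s(2k+1)\geq n$; this pins down the candidate cyclic factors $C_{n,k}$.

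The core of the argument is multiplicative independence of the family $\gamma_0,\ldots,\gamma_{K_n}$. Consider a hypothetical relation $\prod_k\gamma_k^{\alpha_k}=1$ with $0\leq\alpha_k<2^{s(n,k)}$. Writing each $\alpha_k=\sum_j 2^j\alpha_{k,j}$ in binary and applying Frobenius termwise, the product rewrites as $\prod_{(k,j)\in S}(1+y^{2^j(2k+1)})$, where $S=\{(k,j):\alpha_{k,j}=1\}$ and $j<s(n,k)$ on $S$. Because every positive integer admits a unique factorization $2^j\cdot(2k+1)$, the exponents $m(k,j)=2^j(2k+1)$ appearing in this product are pairwise distinct integers strictly less than $n$. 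Expanding, the lowest-degree nontrivial term is $y^{m^*}$ with $m^*=\min_{(k,j)\in S}m(k,j)$, contributed by a single singleton subset of $S$; its coefficient in $\mathbb{F}_2$ is $1$, forcing $S=\emptyset$ and hence $\alpha_k=0$ for all $k$.

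With independence established, the subgroup $\langle\gamma_0,\ldots,\gamma_{K_n}\rangle\subseteq F_0(n)$ is an internal direct product of cyclic groups of orders $2^{s(n,k)}$. A short count then closes the argument: each integer $m$ with $1\leq m\leq n-1$ corresponds to a unique pair $(k,j)$ via $m=2^j(2k+1)$ satisfying $j<s(n,k)$, and conversely, so $\sum_{k=0}^{K_n}s(n,k)=n-1$. Hence the generated subgroup has cardinality $2^{n-1}=|F_0(n)|$ and so equals $F_0(n)$. Existence and uniqueness of the factorization $f=\gamma_0^{\alpha_0}\cdots\gamma_{K_n}^{\alpha_{K_n}}$ in the normalized range $0\leq\alpha_k<2^{s(n,k)}$ follow, and this reassembles into the product-of-cyclic-groups description claimed.

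The main obstacle is the independence step: one must exclude any lower-order cancellation inside $\prod(1+y^{m(k,j)})$. The decisive combinatorial fact is the uniqueness of $m=2^j(2k+1)$, which guarantees that the minimal exponent $m^*$ is reached by exactly one term of the expansion and therefore survives reduction modulo $2$. Once this is granted, the cardinality bookkeeping $\sum s(n,k)=n-1$ is elementary and turns existence into a corollary of independence, sparing a separate constructive peeling-off argument in the valuation filtration.
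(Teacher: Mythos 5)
Your argument is correct, but it proceeds differently from the paper. The paper proves the theorem by induction on $n$: assuming the factorization for $F_0(n)$, an element of $F_0(n+1)$ is written as $g\cdot\gamma_0^{\alpha_0}\cdots\gamma_K^{\alpha_K}$ with $g$ in the two-element multiplicative kernel $\{1,\,1+(x-1)^n\}$ of the canonical map $F_0(n+1)\to F_0(n)$; when $n$ is odd the element $1+(x-1)^n$ is a new generator $\gamma_{(n-1)/2}$, and when $n$ is even it equals $\gamma_k^{2^m}$ for $n=2^m(2k+1)$ and is absorbed into an existing cycle, so the cycle lengths grow as claimed. You instead argue globally in $\mathbb{F}_2[y]/(y^n)$: the Frobenius identity gives $\gamma_k^{2^t}=1+y^{2^t(2k+1)}$ and hence the exact orders; independence follows because in any relation $\prod_{(k,j)\in S}(1+y^{2^j(2k+1)})=1$ the exponents $2^j(2k+1)$ are pairwise distinct (unique splitting of an integer into a $2$-power times an odd part) and all below $n$, so the coefficient of $y^{m^*}$ at the minimal exponent $m^*$ survives and forces $S=\emptyset$; and surjectivity follows from the counting identity $\sum_k s(n,k)=n-1$, since every $m\in\{1,\dots,n-1\}$ corresponds to exactly one admissible pair $(k,j)$. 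Your route avoids the inductive case analysis (which in the paper is stated somewhat loosely, with an evident odd/even slip), makes the orders and the uniqueness of exponents in the normalized range explicit, and in passing supplies the consistent reading of $s(n,k)$ as the least $t$ with $2^t(2k+1)\geq n$, correcting what appears to be a misprint in the statement; the paper's induction, by contrast, is shorter and exhibits directly how the cycle structure of $F_0(n+1)$ is built from that of $F_0(n)$ through the quotient maps $\mu_{n+1,n}$, which fits the filtration viewpoint used elsewhere in the paper.
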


\begin{proof}
The cases $n=2$ is trivial, and the elements of $F_0(3)$ are $\gamma_0^k$, $k=0,\ldots,3$.
If the statement is true for $F_0(n)$, $f\in F_0(n+1)$ has a uniquely defined factorization
$f=g\gamma_0^{\alpha_0}\ldots\gamma_K^{\alpha_K}$ where $g\in\set{1+\eps(x-1)^n}{\eps=0,1}$, the multiplicative
kernel of the canonical morphism $\pi:F_0(n+1)\to F_0(n)$. If $n$ is odd, this already proves the claim;
in case $n$ is odd and $g\neq 1$ we must have $g=(1+(x-1)^\ell)^{2^m}$, $\ell$ odd, and the statement
of the result follows for $n+1$.
\end{proof}

\subsection{Intermediate Fields}

We start with slightly generalizing Corollary~\ref{nilpot-char}.
\begin{theorem} Suppose $Q$ is a nilpotent ring with 2-unit $\tau$ and let $F$ be the
unital 3-field with $\mathcal{Q}(F)=Q$.
\begin{enumerate}
  \item
  $Q'\rightsquigarrow 1+Q'\sub F$ establishes a one-to-one correspondence between the subrings $Q'$ of $Q$
  with $\tau\in Q'$ and the unital 3-subfields $F'$ of $F$.
  \item
  The unital 3-subfield $F_0$ is generated by elements $1+q_1,\ldots,1+q_n$ iff $\mathcal{Q}(F_0)$
  is generated by $\tau=q_{1,1}$ and $q_1,\ldots,q_n$.
  \item
  The automorphisms $\Phi$ of $F$ leaving the subfield $F'$ pointwise fixed correspond to the automorphisms
  $\mathcal{Q}\Phi$, leaving $Q'=\mathcal{Q}(F')$ pointwise fixed.
\end{enumerate}
\end{theorem}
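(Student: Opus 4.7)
My plan is to work entirely inside $\mathcal{U}(F)=F\cup Q$ and to exploit the fact that the 2-unit coincides with the binary sum $\tau=1+1$. Under the bijection $F\ni f\longleftrightarrow f-1\in Q$, the 3-field operations on $F$ translate into
\begin{align*}
(1+q_1)\hat{+}(1+q_2)\hat{+}(1+q_3)&=1+(\tau+q_1+q_2+q_3),\\
(1+q_1)(1+q_2)&=1+(q_1+q_2+q_1q_2),\\
\overline{1+q}&=1+(-\tau-q),
\end{align*}
and these three formulas are the engine of the whole argument.

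For Part~(1), forward direction, I would check the 3-subfield axioms for $F'=1+Q'$ directly from the above: ternary addition closes precisely because $\tau\in Q'$, while multiplication and querelement only use the subring axioms. Multiplicative invertibility is the single non-formal step, and here nilpotency of $Q$ supplies the terminating expansion $(1+q)^{-1}=1+\sum_{\nu\geq 1}(-q)^\nu\in 1+Q'$. For the reverse direction, I would put $Q'=F'-1$; the inclusion $\tau\in Q'$ is free from $\tau=(1\hat{+}1\hat{+}1)-1\in F'-1$, and the remaining subring axioms for $Q'$ rearrange into the closure axioms for the 3-subfield $F'$. The composite maps $Q'\mapsto 1+Q'\mapsto (1+Q')-1$ and $F'\mapsto F'-1\mapsto 1+(F'-1)$ are manifestly the identity.

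Part~(2) then drops out of Part~(1): the smallest 3-subfield of $F$ containing $1+q_1,\ldots,1+q_n$ corresponds, under the bijection of Part~(1), to the smallest subring of $Q$ containing $q_1,\ldots,q_n$ together with the forced element $\tau$. For Part~(3) I would invoke the Proposition immediately preceding the theorem, which already supplies a bijection $\Phi\longleftrightarrow\mathcal{Q}\Phi$ between automorphisms of $F$ and $\mathcal{Q}$-ring automorphisms of $Q$ via $\mathcal{Q}\Phi(q_{1,f})=q_{1,\Phi(f)}$. Since $\Phi(f)=f$ is equivalent to $\mathcal{Q}\Phi(q_{1,f})=q_{1,f}$ and $Q'=\mathcal{Q}(F')=\set{q_{1,f}}{f\in F'}$, fixing $F'$ pointwise under $\Phi$ is exactly fixing $Q'$ pointwise under $\mathcal{Q}\Phi$.

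The hard part is isolated in Part~(1): without nilpotency of $Q$ the geometric-series inverse of $1+q$ need not lie in the subring generated by $q$, so $1+Q'$ could fail to be closed under multiplicative inverses even when $Q'$ is an otherwise well-behaved subring containing $\tau$. Once the dictionary in the first paragraph is in place, all remaining steps are bookkeeping.
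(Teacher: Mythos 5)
Your proposal is correct and takes essentially the same route as the paper: both proofs rest on the shift-by-one dictionary between subrings of $Q$ containing $\tau$ and unital 3-subfields of $F$ (with nilpotency supplying the terminating geometric-series inverse of $1+q$), deduce (2) from this correspondence, and get (3) directly from the $\Phi\leftrightarrow\mathcal{Q}\Phi$ identification. The only cosmetic difference is in (2), where you argue abstractly that the inclusion-preserving bijection matches minimal objects, while the paper writes out the elements $k\tau+\sum_\alpha\eps_\alpha q^\alpha$ of the generated subring and resolves brackets explicitly.
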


\begin{proof}
\textbf{(1)} If $F_0$ is a unital 3-subfield of $F$, the unit of $F_0$ must equal the one in $F$, and
$\mathcal{Q}(F_0)$ is a subring containing $\zeta=q_{1,1}$ which has the property that
$\zeta q=q+q$ for all $q\in\mathcal{Q}(F_0)$.
Note that $1+\mathcal{Q}(F_0)=F_0$.

Conversely, assume that $Q\sub\mathcal{Q}(F)$ with $\tau\in Q$ and
define $F_1=\set{f\in F}{q_{1,f}\in Q}$. We have $1\in F_1$ since $\tau\in Q$
and $-1\in F_1$ because $0\in Q$. It follows similarly that for all $f,g\in F_1$,
\begin{equation}
1+f+g\in F_1,\qquad f+g+gf\in F_1,
\end{equation}
and that there is $\breve{f}$ with $1+f+\breve{f}=-1$. From this, and the nilpotency of $Q$ it follows
that $F_1$ is unital 3-field such that $F_1=1+Q$.

\textbf{(2)}
The subring $Q_0$ generated by $\tau$ and $q_1,\ldots,q_n$ consists of the elements $k\tau+\sum_\alpha\eps_\alpha q^\alpha$,
$k\in\mathbb{N}_0$. Note that due to nilpotency, this ring is of finite characteristic. Then $1+Q_0$ is a unital 3-field,
consisting of elements $k+\sum_\alpha\eps_\alpha q^\alpha$, $k$ odd, and resolving the brackets in
$k+\sum_\alpha\eps_\alpha (1+q)^\alpha$ shows that $1+Q_0$ is the unital 3-field generated by 1 and $1+q_1,\ldots,1+q_n$.
Conversely, if $F_0$ is generated by $1,1+q_1,\ldots,1+q_n$, $\mathcal{Q}(F_0)$ contains $\tau,q_1,\ldots,q_n$ and thus
the binary ring $Q_0$, generated by these elements. By the first part of this proof, $F_0=1+Q_0$, and so $Q_0=\mathcal{Q}(F_0)$.

\textbf{(3)} This follows from the involved definitions.
\end{proof}

\begin{remark}
The last theorem can be easily generalized to $\mathcal{Q}$-rings and their $\mathcal{Q}$-subrings, where
a $\mathcal{Q}$-subring $Q_1$ of the $\mathcal{Q}$-ring $Q$ is defined by the requirement that $\tau\in Q_1$
and that $Q_1$ is invariant under the $\hash$-operation.
\end{remark}

\begin{corollary}
For each ideal $J$ of the finite unital 3-field $F$ the subalgebra $1+J$ a unital subfield.
\end{corollary}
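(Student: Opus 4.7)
The plan is to realize this as a direct specialization of part (1) of the preceding theorem, applied with $Q=\mathcal{Q}(F)$ and $Q'=J$. What has to be checked is that (a) $Q$ meets the nilpotency hypothesis of the theorem and (b) $J$ is a subring of $Q$ that contains the 2-unit $\tau$.

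First I would show that $Q=\mathcal{Q}(F)$ is nilpotent. Since $F$ is finite, so is $\mathcal{U}(F)=\mathcal{Q}(F)\cup F$. The ring $\mathcal{U}(F)$ is commutative local with maximal ideal $\mathcal{Q}(F)$ and residue field $\mathbb{F}_2$; in a finite commutative local ring the maximal ideal is automatically nilpotent, because the descending chain $\mathcal{Q}(F)\supseteq\mathcal{Q}(F)^2\supseteq\cdots$ stabilizes and by Nakayama the stable value must be $0$. Hence $\mathcal{Q}(F)^N=0$ for some $N$, placing us in the situation of the preceding theorem.

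Next I would verify that $J$ is a subring of $\mathcal{Q}(F)$ containing $\tau=q_{1,1}$. Since $\mathcal{U}(F)$ is local, the ideal $J$ lies in the unique maximal ideal $\mathcal{Q}(F)$; being an ideal it is closed under addition and under the (binary) multiplication, so it is a (non-unital) subring of $\mathcal{Q}(F)$. Because the ambient context of Section~\ref{galois section} is characteristic $1$, we have $1+1=0$ inside $\mathcal{U}(F)$, and therefore $\tau=q_{1,1}$ is identified with the zero element of $\mathcal{U}(F)$; thus $\tau=0\in J$ trivially.

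With both hypotheses in place, part (1) of the preceding theorem gives the bijection $Q'\rightsquigarrow 1+Q'$ from subrings $Q'\ni\tau$ to unital 3-subfields of $F$, and applying it to $Q'=J$ yields that $1+J$ is a unital 3-subfield. I do not expect a genuine obstacle here: the corollary is essentially bookkeeping, and the only point worth flagging is that in characteristic $1$ the extra condition $\tau\in Q'$ of the theorem becomes vacuous, so that every ideal of $\mathcal{U}(F)$ (not merely those containing the image of $2$) gives rise to a subfield via $J\mapsto 1+J$.
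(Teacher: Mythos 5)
Your proposal is correct in the setting it announces and takes essentially the same route as the paper: both arguments reduce the corollary to part (1) of the preceding theorem, so that everything hinges on $J$ being a subring of $\mathcal{Q}(F)$ containing the 2-unit $\tau=q_{1,1}$. The difference is how that membership is obtained. The paper asserts $\zeta=q_{1,1}=q(q-1)^{-1}\in J$, i.e.\ it claims $\tau\in J$ for an arbitrary finite unital 3-field; but $q(q-1)^{-1}$ is just the $\hash$-element $q^\hash$ of $q$, so for $q\in J$ this shows $q^\hash\in J$, not $\tau\in J$. You instead invoke the characteristic-1 context of Section~\ref{galois section}, where $1+1=0$ in $\mathcal{U}(F)$ and hence $\tau=0\in J$ trivially, and you also verify the nilpotency of $\mathcal{Q}(F)$ (finite local ring plus Nakayama), a hypothesis of the theorem that the paper leaves implicit; both verifications are sound. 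Your restriction to characteristic 1 is in fact prudent rather than a loss of generality: for $F=\TF(3)\sub\mathbb{Z}/8$ and the ideal $J=\{0,4\}$ (the kernel of $\mathcal{Q}$ applied to the reduction $\TF(3)\to\TF(2)$) one has $1+J=\{1,5\}$, which is not closed under ternary addition since $1+1+1=3$; so the conclusion genuinely requires $\tau\in J$, which holds automatically exactly in the characteristic-1 setting of this section (or, more generally, whenever $2\in J$). In short, your argument is correct where the statement is true and makes the corollary's scope explicit, whereas the paper's one-line justification of $\tau\in J$ does not work at the stated level of generality.
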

\begin{proof}
For $\zeta=q_{1,1}\in\mathcal{Q}(F)$ we have
$\zeta=q(q-1)^{-1}\in J$.
\end{proof}

\begin{corollary}
For a unital 3-subfield $F$ of $F_0(n)$ which is generated by polynomials $P_1,\ldots,P_g$ there exist natural
numbers $n_1,\ldots,n_g$ such that $F\cong F_0(n_1,\ldots,n_g)$.
\end{corollary}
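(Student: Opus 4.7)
The plan is to produce an explicit isomorphism $\Phi : F_0(n_1, \ldots, n_g) \to F$ that sends each generator $z_i$ to $P_i$, with the $n_i$ read off as nilpotency orders. Set $q_i = P_i - 1 \in \mathcal{Q}(F_0(n))$; because $(x - 1)^n = 0$ in $\mathcal{U}(F_0(n))$, every element of $\mathcal{Q}(F_0(n))$ is nilpotent, so I may let $n_i$ be the least positive integer with $q_i^{n_i} = 0$. By part~(2) of the preceding theorem, $\mathcal{Q}(F)$ coincides with the binary subring of $\mathcal{Q}(F_0(n))$ generated by $\tau$ together with $q_1, \ldots, q_g$, and $F = 1 + \mathcal{Q}(F)$.

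From the definition one has $\mathcal{U}(F_0(n_1, \ldots, n_g)) = \mathbb{F}_2[z_1, \ldots, z_g]/\bigl((z_1-1)^{n_1}, \ldots, (z_g-1)^{n_g}\bigr)$, a nilpotent local ring with residue field $\mathbb{F}_2$. The universal property of the polynomial algebra gives a unique ring homomorphism $\Psi$ from this quotient into $\mathcal{U}(F_0(n))$ sending $z_i \mapsto P_i$; the defining relations $(z_i - 1)^{n_i} = 0$ are respected by the choice of $n_i$. The image is the subring generated by the $P_i$, which equals $\mathcal{U}(F)$ by the identification above. Since $\Psi$ preserves $\tau$ and sends the $\hash$-elements of Example~\ref{hash-ex} to their counterparts, the Proposition preceding Definition~\ref{hash-def} guarantees that $\Psi$ descends to a surjective morphism of unital 3-fields $\Phi : F_0(n_1, \ldots, n_g) \to F$.

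The main obstacle is injectivity of $\Phi$. My plan is induction on $g$ using the semi-direct product decomposition $F_0(n_1, \ldots, n_g) = F_0(n_1, \ldots, n_{g-1}) \ltimes \mathfrak{J}_g$ from the remark following the definition of these 3-fields. On the $F$-side I would take $F'$ to be the subfield generated by $P_1, \ldots, P_{g-1}$ and $\mathfrak{J}'_g$ the ideal of $\mathcal{Q}(F)$ generated by $q_g$, yielding the analogous split short exact sequence. The base case $g = 1$ is Theorem~\ref{singly-gen-char}, which identifies the subfield generated by a single element with $F_0(n_P)$. For the inductive step, the hypothesis says $\Phi$ restricts to an isomorphism $F_0(n_1, \ldots, n_{g-1}) \to F'$, while minimality of $n_g$ forces $\Phi|_{\mathfrak{J}_g}$ to be injective; the five-lemma applied to the two split short exact sequences then delivers injectivity of $\Phi$. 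The delicate point I expect to confront is the verification that the splitting on the $F$-side is genuine, i.e.\ that $F'$ and $\mathfrak{J}'_g$ intersect trivially and together span $\mathcal{U}(F)$—this is precisely where any additional algebraic relation among the $P_i$ beyond the individual nilpotency conditions would manifest, and addressing it may require reorganising the generating set so that the chosen $P_g$ is complementary to $F'$.
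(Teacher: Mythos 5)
Your strategy---mapping the ``free'' object $F_0(n_1,\ldots,n_g)$ onto $F$ by $z_i\mapsto P_i$ and then fighting for injectivity---cannot succeed, and the difficulty you flag in your last sentence is not a repairable technicality but the fatal point. The generators of a subfield of $F_0(n)$ are all polynomials in the \emph{single} nilpotent element $x-1$, so they satisfy many relations beyond the individual nilpotency conditions $(P_i-1)^{n_i}=0$. Concretely, take $n=7$, $P_1=1+(x-1)^2$, $P_2=1+(x-1)^3$ and $F=\langle P_1,P_2\rangle\sub F_0(7)$. Then $q_1=(x-1)^2$ and $q_2=(x-1)^3$ satisfy $q_1^3=q_2^2=(x-1)^6\neq 0$, a relation that fails in $\mathcal{U}F_0(4,3)$; indeed $F$ consists of the $32$ polynomials $1+\sum_{i=2}^{6}\eps_i(x-1)^i$, while $|F_0(4,3)|=2^{11}$, so your $\Phi$ is a surjection with a huge kernel and no five-lemma argument can restore injectivity. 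Reorganising the generating set does not help either: since $\dim_{\mathbb{F}_2}\mathcal{Q}(F)/\mathcal{Q}(F)^2=2$, any presentation of this $F$ needs two generators, and cardinality forces $n_1n_2=6$; but $F\not\cong F_0(6)$ (there $\dim\mathcal{Q}/\mathcal{Q}^2=1$) and $F\not\cong F_0(2,3)$, because the set of squares in $\mathcal{Q}(F)$ is the four-element set $\set{\eps_2(x-1)^4+\eps_3(x-1)^6}{\eps_i\in\mathbb{F}_2}$ whereas every square in $\mathcal{Q}(F_0(2,3))$ lies in $\{0,(x_2-1)^2\}$, and any isomorphism of unital 3-fields induces a ring isomorphism of the associated $\mathcal{Q}$-rings.

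So no argument that sends the independent variables of $F_0(n_1,\ldots,n_g)$ to the given generators can prove the statement in its literal multivariable reading; the example above shows the comparison map always collapses. The paper in fact gives no proof of this corollary, and its actual treatment of subfields goes a different way: by Lemma~\ref{1-generated subfield} each single generator $1+(x-1)^k(1+P_k)$ generates the same field as the pure power $1+(x-1)^k$, and the subsequent theorem identifies every subfield of $F_0(n)$ with some $F_0(n;g_1,\ldots,g_k)=\langle 1+(x-1)^{g_1},\ldots,1+(x-1)^{g_k}\rangle$, i.e.\ with the field determined by a sub-semigroup of exponents---an object which, as the example shows, is in general \emph{not} isomorphic to any $F_0(n_1,\ldots,n_g)$ built from independent variables. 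If the corollary is to be read in that weaker sense (the generators may be replaced by pure powers of $x-1$), the correct route is the paper's: reduce each generator via Lemma~\ref{1-generated subfield} and control the resulting span through the exponent semigroup, using Theorem~\ref{singly-gen-char} only for the singly generated pieces; your base case $g=1$ agrees with this, but the inductive semi-direct-product step should be abandoned.
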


\begin{example}
The smallest subfields, those of order 2, are generated by polynomials
\begin{equation}
1+(x-1)^k(1+P_1)=\left(1+(x-1)^k\right)(1+P_1)-P_1
\end{equation}
with $2k\geq n$.
We call $k$ the \emph{lower degree} of the subfield.
As we will see, the automorphisms which are constant on such a field are of the form $\Psi_P$ with
$P_0=(x-1)+(x-1)^\ell(1+P_1)$ with $\ell\geq n-k+1$ so that $\Aut F_0(n)$ cannot distinguish between
subfields of the same degree in terms of fixed point subgroups. On the other hand, all elements of
$\Aut F_0(n)$ preserve $k$, and, since
\begin{equation}
\Psi(1+P_1)=\left(\Psi(P)-1\right)\Psi(x-1)^{-k}-1=(x-1)^k(1+Q_1),
\end{equation}
\end{example}

\begin{example}
The subfield of squares, $F_0(n)^2=\set{f\in F_0(n)}{f=1+\sum_{\nu\geq 1}\eta_\nu(x-1)^{2\nu}}$
is isomorphic to all subfields generated by a polynomial of the form $f_0=1+(x-1)^2 f_1$,
\end{example}

We will need

\begin{lemma}\label{binomial formula}
Fix a natural number $\alpha$, write $\alpha=\sum_{\nu=0}^{n_\alpha}\alpha_\nu 2^\nu$,
$\alpha_\nu=0,1$, and let
\begin{equation}
N_\alpha=\set{\sum_{\nu=0}^{n_\alpha}\eps_\nu\alpha_\nu 2^\nu}{\eps_\nu=0,1}.
\end{equation}
Then
\begin{equation}
\left[1+(x-1)^k\right]^\alpha=1+\sum_{n\in N_\alpha}(x-1)^{kn}.
\end{equation}
Conversely, given $N\sub\{1,\ldots,n-1\}$, then $P_N=1+\sum_{\nu\in N}(x-1)^{\nu}=(1+(x-1)^k)^\alpha$
iff $N\sub k\mathbb{N}$ and
\begin{equation}
N=\set{\sum_\nu\eps_\nu2^\nu}{k2^\nu\in N,\ \eps_\nu=0,1}\cap\{1,\ldots,n-1\}.
\end{equation}
\end{lemma}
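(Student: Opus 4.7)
The plan hinges on one structural observation: the ambient coefficient ring has characteristic~$2$, so the Frobenius identity $(a+b)^2=a^2+b^2$ holds, and iterating it gives $(1+y)^{2^\nu}=1+y^{2^\nu}$ for every $\nu\geq 0$. Specialising $y=(x-1)^k$, both halves of the lemma reduce to elementary book-keeping on binary expansions.

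\textbf{Forward direction.} Write $\alpha=\sum_{\nu=0}^{n_\alpha}\alpha_\nu 2^\nu$ in binary and split the exponent to obtain
\begin{equation*}
[1+(x-1)^k]^\alpha=\prod_{\nu:\alpha_\nu=1}\left(1+(x-1)^{k\cdot 2^\nu}\right)
\end{equation*}
via the iterated Freshman's dream. Expanding by distributivity gives a sum, indexed by subsets $S\sub\{\nu:\alpha_\nu=1\}$, of monomials $(x-1)^{k\sum_{\nu\in S}2^\nu}$. Re-encoding $S$ as a $0/1$-sequence $(\eps_\nu)$ forced to be $0$ where $\alpha_\nu=0$, the exponents run over $k\cdot N_\alpha$ and the summand $S=\emptyset$ produces the leading~$1$, which is exactly the asserted formula.

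\textbf{Converse and main obstacle.} Suppose $P_N=[1+(x-1)^k]^\alpha$ inside $F_0(n)$. The first part forces every nonzero exponent to be a positive multiple of $k$ strictly below $n$, so $N\sub k\mathbb{N}$. The pure-power terms of the expansion are $(x-1)^{k\cdot 2^\nu}$, present precisely when $\alpha_\nu=1$ and $k\cdot 2^\nu<n$, so the non-truncated binary digits of $\alpha$ can be read off $N$ by the recipe $\alpha_\nu=1\iff k\cdot 2^\nu\in N$. Plugging this $\alpha$ back into the forward formula recovers the remaining elements of $N$ as $k$ times the subset sums of those digits, yielding the displayed arithmetic characterisation. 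The principal difficulty I expect is the nilpotency relation $(x-1)^n=0$ in $F_0(n)$: high bits $\alpha_\nu=1$ with $k\cdot 2^\nu\geq n$ produce vanishing terms, so $\alpha$ is not uniquely determined by $P_N$, and the converse must therefore be read as reconstructing the \emph{minimal} $\alpha$ consistent with $N$. Unambiguity of this reconstruction follows from the uniqueness of representations in the $\mathbb{F}_2$-basis $\{(x-1)^i\}_{0\leq i<n}$ of $\mathcal{U}F_0(n)$, which allows us to equate coefficients term by term.
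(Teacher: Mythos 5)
Your proof is correct and follows essentially the same route as the paper: the forward direction is exactly the paper's argument, factoring $[1+(x-1)^k]^\alpha$ via the characteristic-2 Frobenius identity into $\prod_{\nu:\alpha_\nu=1}\bigl(1+(x-1)^{k2^\nu}\bigr)$ and matching subsets of $\{\nu:\alpha_\nu=1\}$ with the summands $(x-1)^{kn}$. The paper's proof stops there and leaves the converse implicit; your additional discussion of reading off the digits from the terms $(x-1)^{k2^\nu}$, and of the truncation caused by $(x-1)^n=0$ (so that only a minimal $\alpha$ is determined), is a sound and compatible elaboration of that omitted step.
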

\begin{proof}
We have
\begin{equation}
(1+(x-1)^k)^\alpha=\prod_{\nu=0}^{n_\alpha}\left(1+(x-1)^{k2^\nu}\right)^{\alpha_\nu},
\end{equation}
and each subset $N'\sub\set{\nu}{\alpha_\nu=1}$ corresponds to exactly on of the summands $(x-1)^{kn}$ of
$1+\sum_{n\in N_\alpha}(x-1)^{kn}$, where $n=\sum_{\nu\in N'}2^\nu$ (and with $n=0$ for the empty set).
\end{proof}

We start with a `local' version Theorem~\ref{singly-gen-char}.

\begin{lemma}\label{1-generated subfield}
The unital 3-subfield generated by $P=1+(x-1)^k(1+P_k)\in F_0(n)$ is given by
\begin{equation}
\langle P\rangle=
\set{1+\sum_{0<ki\leq n-1}\eta_i(x-1)^{ki}(1+P_k)^i}{\eta_i\in\mathbb{F}_2}=: F_1
\end{equation}
and is isomorphic to
\begin{equation}
\langle 1+(x-1)^k\rangle=\set{1+\sum_{0<i\leq(n-1)/k}\eta_i(x-1)^{ki}}{\eta_i\in\mathbb{F}_2}.
\end{equation}
\end{lemma}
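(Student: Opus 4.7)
The plan is to apply part (2) of the preceding correspondence theorem ($F_0 \leftrightarrow \mathcal{Q}(F_0)$ with $\tau$ adjoined) to reduce the problem to understanding the subring of $\mathcal{Q}F_0(n)$ generated by a single element. Writing $P = 1 + q$ with $q = (x-1)^k(1+P_k)$, the 3-subfield $\langle P\rangle$ corresponds to the subring of $\mathcal{Q}F_0(n)$ generated by $\tau = q_{1,1}$ and $q$. In the local ring $\mathcal{U}F_0(n) \cong \mathbb{F}_2[y]/\langle y^n\rangle$ (with $y = x-1$) we have $\tau = 1+1 = 0$, so this subring is simply the $\mathbb{F}_2$-linear span of the powers $\{q^i : i\geq 1\}$, and $\langle P\rangle = 1 + \mathbb{F}_2\langle q,q^2,\ldots\rangle$.

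First I would compute $q^i = (x-1)^{ki}(1+P_k)^i$ and observe that it vanishes precisely when $ki \geq n$, isolating the index range $0 < ki \leq n-1$ that appears in the statement. Next I would verify $\mathbb{F}_2$-linear independence of these surviving powers: since $P_k \in \mathcal{Q}F_0(n)$ lies in the maximal ideal, $(1+P_k)$ has constant term $1$ and is a unit in $\mathcal{U}F_0(n)$. Hence $(1+P_k)^i$ is a unit for every $i$, and $q^i$ has leading term exactly $(x-1)^{ki}$ in the $(x-1)$-adic filtration. The $q^i$ therefore sit in pairwise distinct graded pieces, so are linearly independent. Adding $1$ gives the stated description of $F_1$.

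For the isomorphism with $\langle 1+(x-1)^k\rangle$, I would apply the same argument in the special case $P_k = 0$ to obtain its analogous description, and then note that both generators $q$ and $q_0 := (x-1)^k$ have the same nilpotency index $N = \lceil n/k\rceil$, again thanks to $(1+P_k)$ being a unit. Consequently both rings $\mathcal{Q}(\langle P\rangle)$ and $\mathcal{Q}(\langle 1+(x-1)^k\rangle)$ are isomorphic, as non-unital $\mathbb{F}_2$-algebras, to $T\mathbb{F}_2[T]/\langle T^N\rangle$ by sending $T \mapsto q$ and $T \mapsto q_0$ respectively. Composing these and transferring via the $1+(\cdot)$ correspondence yields the required isomorphism of unital 3-fields, which on generators simply sends $1+(x-1)^k \mapsto P$.

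The main delicate point is the linear independence step: it rests entirely on $(1+P_k)$ being a unit so that multiplication by it preserves (and does not collapse) the $(x-1)$-adic filtration on $\mathcal{U}F_0(n)$. Once this is secured, the matching nilpotency index forces the two singly-generated nilpotent rings to coincide abstractly, and everything else is bookkeeping dictated by the $1+Q_0$ correspondence.
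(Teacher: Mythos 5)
Your proposal is correct, but it follows a genuinely different route from the paper's. You work entirely on the $\mathcal{Q}$-side: via the intermediate-fields correspondence and the fact that $\tau=q_{1,1}=0$ in characteristic 1, the subfield $\langle P\rangle$ is $1+\operatorname{span}_{\mathbb{F}_2}\{q^i\colon i\geq 1\}$ with $q=(x-1)^k(1+P_k)$, and your valuation argument (each nonzero $q^i$ has lowest-order term $(x-1)^{ki}$ because $1+P_k$ is a unit) yields the equality $\langle P\rangle=F_1$ directly, together with explicit isomorphisms of both $\mathcal{Q}$-rings with $T\,\mathbb{F}_2[T]/\langle T^N\rangle$, $N=\lceil n/k\rceil$. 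The paper argues differently: it only proves the inclusion $\langle P\rangle\subseteq F_1$ (via a variant of Lemma~\ref{binomial formula}), then invokes Theorem~\ref{singly-gen-char} — the classification of singly generated characteristic-1 fields by the nilpotency index of $P-1$ — to conclude $\langle P\rangle\cong\langle 1+(x-1)^k\rangle\cong F_0(n_P)$ of cardinality $2^{n_P-1}$, and upgrades the inclusion to equality by a cardinality count for $F_1$. Both proofs hinge on the same key fact, namely that $1+P_k$ is a unit so that $P-1$ and $(x-1)^k$ have the same nilpotency index; your version buys a self-contained equality without the counting step (indeed, the linear independence you verify is what silently underlies the paper's claim that $|F_1|=2^{n_P-1}$), while the paper's version is shorter because it recycles Theorem~\ref{singly-gen-char}. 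One point worth stating explicitly in your final step: transferring the ring isomorphism of the $\mathcal{Q}$-rings back to an isomorphism of the unital 3-fields requires that 2-units map to 2-units, which is trivial here since $\tau=0$ on both sides; this is exactly the content of the paper's proposition that the ring structure of $\mathcal{Q}(F)$ determines $F$.
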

\begin{proof}
By (a slight modification of) Lemma~\ref{binomial formula} the elements of $\langle P\rangle$ belong to $F_1$,
and since $F_1$ is a unital 3-field, $\langle P\rangle\sub F_1$.
By Theorem~\ref{singly-gen-char}, the isomorphism class of $\langle P\rangle$ is determined by the minimal number
$n_P$ with $(P-1)^{n_P}=0$. This number is the same for $P$ and $1+(x-1)^k$ hence
$\langle P\rangle\cong\langle 1+(x-1)^k\rangle$ are both of cardinality $2^{n_P-1}$.
Since also $F_1$ is of this cardinality, $F_1=\langle P\rangle$.
\end{proof}

\begin{theorem}
$F\sub F_0(n)$ is a subfield iff there are natural numbers $g_1,\ldots,g_k\leq n-1$, $k\leq n-1$, with
\begin{multline}
F\cong F_0(n;g_1,\ldots g_k):=\langle 1+(x-1)^{g_1},\ldots,1+(x-1)^{g_k}\rangle=\\
=\set{1+\sum_{1\leq \nu_1g_1+\ldots+\nu_kg_k\leq n-1}\eps_{\nu_1\nu_2\ldots \nu_k}(x-1)^{\nu_1g_1+\ldots+\nu_kg_k}}
             {\eps_{\nu_1\ldots \nu_k}\in\mathbb{F}_2}=:G(n;g_1,\ldots,g_k).
\end{multline}
\end{theorem}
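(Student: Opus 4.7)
The plan is to split the theorem into three pieces: first prove the displayed equality $F_0(n;g_1,\ldots,g_k) = G(n;g_1,\ldots,g_k)$, which together with the trivial ``if'' direction handles one half; then establish the ``only if'' direction up to a cardinality match; and finally address the main obstacle, which is constructing an explicit ring isomorphism between the relevant $\mathcal{Q}$-rings. Throughout I write $y = x-1$ and $\Gamma = \langle g_1,\ldots,g_k\rangle$ for the numerical semigroup generated by the $g_j$.

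For the equality I would first verify $F_0(n;g_1,\ldots,g_k)\subseteq G(n;g_1,\ldots,g_k)$ by showing that $G$ is itself a 3-subfield of $F_0(n)$ containing each generator $1+y^{g_j}$: closure under ternary addition is immediate from the additivity of $\Gamma\cap\{1,\ldots,n-1\}$, and closure under the binary product is a direct expansion invoking Lemma~\ref{binomial formula}. For the reverse inclusion I would prove by strong induction on $e\in\Gamma\cap\{1,\ldots,n-1\}$ that $1+y^e$ lies in the subfield generated by the $1+y^{g_j}$'s; with $e=g_j+e'$ and $e'\in\Gamma$, the identity
\begin{equation*}
[1+y^{g_j}][1+y^{e'}]\,\hat{+}\,[1+y^{g_j}]\,\hat{+}\,[1+y^{e'}] \;=\; 1+y^{g_j+e'}
\end{equation*}
realises $1+y^e$ from elements already in the subfield, and iterated ternary sums produce every element of $G$. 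For the ``only if'' direction I would associate to a subfield $F\subseteq F_0(n)$ the set $V(F) = \{v_y(q) : q\in\mathcal{Q}(F)\setminus\{0\}\}$ of $y$-adic valuations; multiplicative closure of $\mathcal{Q}(F)$ in $\mathbb{F}_2[y]/y^n$ makes $V(F)\cup\{0\}$ the restriction of a numerical semigroup $\Gamma_F$ to $\{0,\ldots,n-1\}$, and I would pick $g_1,\ldots,g_k$ to be any generating set of $\Gamma_F$. The valuation filtration of $\mathcal{Q}(F)$ has one-dimensional graded pieces precisely at the elements of $V(F)$, which yields $|F| = 2^{|V(F)|} = |F_0(n;g_1,\ldots,g_k)|$; by the proposition relating morphisms of $\mathcal{Q}$-rings to 3-field morphisms, it suffices to construct a ring isomorphism $\mathcal{Q}(F_0(n;g_1,\ldots,g_k))\to\mathcal{Q}(F)$.

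The main obstacle is this ring isomorphism, because $\mathcal{Q}(F)$ is in general not equal to its associated graded $\bigoplus_{e\in V(F)}\mathbb{F}_2 y^e$: generic elements carry non-trivial higher-order tails. My plan is a filtered-to-graded lifting argument. Begin with arbitrary lifts $v_{g_j}\in\mathcal{Q}(F)$ of $y^{g_j}$ for each generator; for each remaining $e\in V(F)$ fix a decomposition $e=\sum_j\alpha_j g_j$ and tentatively set $v_e = \prod_j v_{g_j}^{\alpha_j}$, which has leading term $y^e$. Multiplicative consistency $v_{e_1}v_{e_2}=v_{e_1+e_2}$ is then enforced by induction on $e_1+e_2$: any discrepancy lies strictly above $e_1+e_2$ in the $y$-filtration, so it can be absorbed into a correction of $v_{e_1+e_2}$, and the procedure terminates because the filtration on $\mathbb{F}_2[y]/y^n$ is bounded by $n$. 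The resulting assignment $y^e\mapsto v_e$ extends $\mathbb{F}_2$-linearly to the desired ring isomorphism $\mathcal{Q}(F_0(n;g_1,\ldots,g_k))\to\mathcal{Q}(F)$, which lifts to the 3-field isomorphism $F_0(n;g_1,\ldots,g_k)\cong F$ and completes the proof.
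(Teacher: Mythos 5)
Your treatment of the displayed equality and of the ``if'' direction is correct and is essentially the paper's own argument: the paper likewise observes that $G(n;g_1,\ldots,g_k)$ is a unital 3-subring, hence a subfield containing the generators, recovers $1+(x-1)^{\nu_1g_1+\cdots+\nu_kg_k}$ from the identity $(1+(x-1)^{a})(1+(x-1)^{b})-(1+(x-1)^{a})-(1+(x-1)^{b})=1+(x-1)^{a+b}$, and then adjoins summands one at a time by induction on the number of nonzero coefficients $\eps_{\nu_1\ldots\nu_k}$. Your reduction of the remaining isomorphism claim to a ring isomorphism of the corresponding $\mathcal{Q}$-rings, and the cardinality count via the $y$-adic filtration (with $y=x-1$), are also fine.

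The gap is the filtered-to-graded lifting in your last step, and it is precisely the mathematical heart of the ``only if'' direction. Setting $v_e=\prod_j v_{g_j}^{\alpha_j}$ for one fixed decomposition $e=\sum_j\alpha_j g_j$ does not make the linear extension $y^e\mapsto v_e$ multiplicative: multiplicativity forces $\prod_j v_{g_j}^{\gamma_j}$ to depend only on $\sum_j\gamma_j g_j$, i.e.\ the chosen lifts must satisfy every relation of the value semigroup exactly, not merely modulo higher filtration. Your correction device cannot achieve this when an exponent has two essentially different factorizations: if, say, $g_1+g_3=2g_2<n$ (e.g.\ $g_1=3$, $g_2=4$, $g_3=5$), the two elements $v_{g_1}v_{g_3}$ and $v_{g_2}^2$ of $\mathcal{Q}(F)$ may differ by a term of strictly higher valuation, and redefining the single element $v_{g_1+g_3}$ enforces at most one of the two required identities --- the discrepancy sits between two already-formed products and cannot be ``absorbed''. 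What you are really asserting is that $\mathcal{Q}(F)$ is isomorphic to its associated graded algebra (the truncated monomial algebra of its exponent semigroup); this is a genuinely nontrivial statement, since filtered algebras need not be isomorphic to their associated graded, and making it work would require re-choosing the lifts $v_{g_j}$ themselves (equivalently, moving $F$ by a substitution automorphism of $F_0(n)$) and proving that all such obstructions vanish, which your induction never addresses. So as written the proposal does not establish $F\cong F_0(n;g_1,\ldots,g_k)$ for an arbitrary subfield; note that the paper's own proof confines itself to showing that $F_0(n;g_1,\ldots,g_k)=G(n;g_1,\ldots,g_k)$ is a subfield and does not carry out this step either, so the burden you took on is real, but the key step is missing.
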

\begin{proof}
In order to see that $F_0(n;g_1,\ldots,g_k)$ is unital 3-subfield it suffices to show that it is
a unital 3-subring. But this is obvious, and it follows that $F_0(n;g_1,\ldots,g_k)$ is contained
in $G(n;g_1,\ldots,g_k)$. By Lemma~\ref{1-generated subfield},
\begin{multline}
1+(x-1)^{\mu g_i+\nu g_j}=\\
=(1+(x-1)^{\mu g_i})(1+(x-1)^{\nu g_j})-(1+(x-1)^{\mu g_i})-(1+(x-1)^{\nu g_j})
\end{multline}
is contained in $F_0(n;g_1,\ldots g_k)$, and similarly, $1+(x-1)^{\nu_1g_1+\ldots+\nu_kg_k}\in F_0(n;g_1,\ldots,g_k)$.
Using induction over the number of elements in $\{\eps_{\nu_1,\ldots,\nu_k}=1\}$ with the induction step
of adjoining $1+(x-1)^{\langle\mu,g\rangle}$ to
$1+\sum_{1\leq\langle g,\nu\rangle\leq n-1}\eps_\nu(x-1)^{\langle\nu,g\rangle}$ being established by
\begin{multline}
1+(x-1)^{\langle\mu,g\rangle}+\sum_{1\leq\langle g,\nu\rangle\leq n-1}\eps_\nu(x-1)^{\langle\nu,g\rangle}=\\
=1+\left(1+(x-1)^{\langle\mu,g\rangle}\right)+
\left(1+\sum_{1\leq\langle g,\nu\rangle\leq n-1}\eps_\nu(x-1)^{\langle\nu,g\rangle}\right),
\end{multline}
one shows that, conversely, $G(n;g_1,\ldots,g_k)\sub F_0(n;g_1,\ldots,g_k)$.
\end{proof}

Let $F\sub F_0(n)$ be a subfield. We call the semigroup
\begin{equation}
\operatorname{Ex}F=\set{\alpha\in\mathbb{Z}/n}{1+(x-a)^\alpha\in F}
\end{equation}
the \emph{exponents} of $F$. The proof of the following results rest on

\begin{lemma}
Suppose $S$ is a sub-semigroup of the additive semigroup $\mathbb{Z}/n$ and
$N_0$ one of its subsets. Then
\begin{enumerate}
  \item there are uniquely determined elements $s_1<\ldots<s_k\in S$ with
  \begin{equation}
  s_{\varkappa+1}=\min S\setminus\set{z_1s_1+\ldots+z_\varkappa s_\varkappa}{z_i\in\mathbb{Z}/n}
  \end{equation}
  \begin{equation}
  S=\set{z_1s_1+\ldots+z_ks_k}{z_i\in\mathbb{Z}/n}
  \end{equation}
  Equivalently, none of the $s_i$ divides $s_j$.
  \item The sub-semigroup $S(N_0)$ generated by $N_0$ can be constructed inductively by letting $s_1=\min N_0$
  and $s_{n+1}=\min N_0\setminus\set{z_1s_1+\ldots z_ns_n}{z_1,\ldots z_n\in\mathbb{N}}$ as long as the latter
  set is not empty. If $M$ is the index before this happens,
\begin{equation}
  S(N_0)=\setminus\set{z_1s_1+\ldots z_Ms_M}{z_1,\ldots z_M\in\mathbb{N}}
\end{equation}
\end{enumerate}
\end{lemma}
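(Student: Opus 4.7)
The plan is to exploit the finiteness of $\mathbb{Z}/n$ so that a greedy construction terminates, producing a canonical generating set, and then to transport the same construction to $N_0$ for part (2).

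For part (1), I would proceed by induction. Take $s_1 = \min S$, where the minimum is taken under the natural ordering of representatives in $\{1,\ldots,n-1\}$. Suppose $s_1<\ldots<s_\varkappa$ have been chosen and set
\[
T_\varkappa = S\setminus\set{z_1s_1+\ldots+z_\varkappa s_\varkappa}{z_i\in\mathbb{Z}/n}.
\]
If $T_\varkappa=\emptyset$, stop with $k=\varkappa$; otherwise put $s_{\varkappa+1}=\min T_\varkappa$ and note that $s_{\varkappa+1}>s_\varkappa$ since $s_1,\ldots,s_\varkappa$ and all their $\mathbb{N}$-multiples (in particular, themselves) lie in the excluded set. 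Because $T_\varkappa\subsetneq T_{\varkappa-1}$ at each step and $S$ is finite, the process must terminate, yielding the presentation $S=\set{z_1s_1+\ldots+z_ks_k}{z_i\in\mathbb{Z}/n}$ by construction. Uniqueness is then immediate: any sequence $(t_1<\ldots<t_\ell)$ satisfying the stated recursive minimum property must have $t_1=\min S=s_1$, and inductively $t_\varkappa=s_\varkappa$, so it agrees with ours and terminates at the same index.

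For the divisibility clause, I would observe that if $i<j$ and $s_i$ divides $s_j$ in $\mathbb{Z}/n$, meaning $s_j\in\langle s_i\rangle\sub\langle s_1,\ldots,s_{j-1}\rangle$, then $s_j\notin T_{j-1}$, contradicting its choice as $\min T_{j-1}$. Hence the constructed $s_i$ satisfy the non-divisibility condition. Conversely, minimality of the generating set extracted this way rules out any redundancy by an $s_i$ being a multiple of another $s_j$, since removing such an $s_i$ would still leave a generating set for $\langle s_1,\ldots,s_k\rangle$, contradicting uniqueness.

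For part (2), I would run the same greedy procedure but using $N_0$ in place of $S$: set $s_1=\min N_0$ and $s_{n+1}=\min\bigl(N_0\setminus\langle s_1,\ldots,s_n\rangle\bigr)$ while that set is nonempty. Termination again follows from $|N_0|\leq n$ being finite. It remains to verify that $S(N_0)=\langle s_1,\ldots,s_M\rangle$: the inclusion $\supseteq$ is clear because each $s_i\in N_0\sub S(N_0)$; for $\subseteq$, any element of $N_0$ either equals some $s_i$ or lies in $\langle s_1,\ldots,s_j\rangle$ for some $j$ (otherwise the procedure would not have stopped at $M$), so every generator of $S(N_0)$ lies in $\langle s_1,\ldots,s_M\rangle$, and the conclusion follows by closure under addition. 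The main obstacle I anticipate is making the ``equivalently'' clause fully symmetric, since non-divisibility of a generating set is in general only a necessary condition for minimality in additive semigroups; here it becomes sufficient precisely because the $s_i$ have been chosen inductively by minimal values, and this is where one must be careful in writing out the equivalence.
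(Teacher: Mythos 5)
The paper offers no proof of this lemma at all --- it is stated bare and immediately followed by the corollary on subfields of $F_0(n)$ --- so there is nothing of the authors' to compare your argument against; judged on its own, your greedy construction is the natural one and is essentially correct for the two enumerated claims. Two points to tighten in part (1): the inequality $s_{\varkappa+1}>s_\varkappa$ is best obtained from $T_\varkappa\sub T_{\varkappa-1}$ together with $s_\varkappa=\min T_{\varkappa-1}\notin T_\varkappa$; as written you only explain why $s_\varkappa$ itself is excluded, not why no smaller element of $S$ can survive into $T_\varkappa$. Also the identity $S=\set{z_1s_1+\ldots+z_ks_k}{z_i}$ deserves the one-line remark that every combination with not all $z_i=0$ lies in $S$ by closure under addition (the all-zero combination is a defect of the statement itself, since a sub-semigroup of $\mathbb{Z}/n$ need not contain $0$). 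Part (2) as you wrote it is fine, including your silent correction of the stray $\setminus$ in the displayed formula.

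The genuinely delicate point is the one you flagged: the clause ``equivalently, none of the $s_i$ divides $s_j$''. Under the multiplicative reading of ``divides'' ($s_j$ an integer multiple of $s_i$) the equivalence is false: the generators $4,6,10$ (inside $\{1,\ldots,n-1\}$ for $n$ large) are pairwise non-dividing and generate the same semigroup as $4,6$, yet $10=4+6$ would never be produced by the recursion. The clause does become correct if ``divides'' is read in the semigroup sense, $s_j\in s_i+S$, and if one works --- as the application to $\operatorname{Ex}F$ really requires --- with exponents in $\{1,\ldots,n-1\}$ where sums $\geq n$ are discarded, so that no wrap-around occurs; then the $s_\varkappa$ are exactly the irreducible elements $S\setminus(S+S)$, and both directions follow from your construction. (With genuine wrap-around in $\mathbb{Z}/n$ even this reading can fail: $S=\mathbb{Z}/5$ has no irreducible elements at all.) Your ``conversely'' sentence does not actually establish a converse, so if you keep the equivalence clause, prove it under the semigroup reading; otherwise your treatment of (1) and (2) stands.
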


\begin{corollary}
The set of isomorphism classes of subfields of $F_0(n)$ is order isomorphic to the set of sub-semigroups of the
additive semigroup $\mathbb{Z}/n$. Furthermore, two subfields $F_{1,2}$ of $F_0(n)$ are
isomorphic iff $\operatorname{Ex}F_1=\operatorname{Ex}F_2$.
\end{corollary}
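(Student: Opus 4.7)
The strategy is to show that $F \longmapsto \operatorname{Ex} F$ realises the claimed bijection, and that inclusion of subfields matches inclusion of sub-semigroups on the nose. The main tools are the explicit description of subfields as $G(n; g_1, \ldots, g_k)$ from the preceding Theorem, and the existence of canonical minimal generating sets for sub-semigroups of $\mathbb{Z}/n$ from the preceding Lemma.

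First I would verify that $\operatorname{Ex} F$ is genuinely a sub-semigroup. For $\alpha, \beta \in \operatorname{Ex} F$ with $\alpha + \beta \leq n-1$, working in the characteristic-$2$ ring $\mathcal{U}F_0(n)$, the identity
\begin{equation*}
1 + (x-1)^{\alpha+\beta} = (1+(x-1)^\alpha)(1+(x-1)^\beta) + (1+(x-1)^\alpha) + (1+(x-1)^\beta),
\end{equation*}
already exploited inside the proof of the preceding Theorem, realises $1+(x-1)^{\alpha+\beta}$ as a binary product followed by a ternary sum of elements of $F$, and hence places it in $F$; for $\alpha + \beta \geq n$ the nilpotency of $x-1$ makes the statement vacuous.

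Surjectivity onto sub-semigroups is then immediate: given $S \sub \mathbb{Z}/n$, pick a minimal generating set $s_1 < \ldots < s_k$ via the preceding Lemma, form $F_0(n; s_1, \ldots, s_k)$, and read off from the explicit description of $G(n; s_1, \ldots, s_k)$ that its exponent set is precisely $S$. Injectivity is just as direct: if $\operatorname{Ex} F_1 = \operatorname{Ex} F_2 = S$, both subfields contain every $1+(x-1)^s$ with $s \in S$, and the preceding Theorem forces both to coincide with $G(n; s_1, \ldots, s_k)$, so $F_1 = F_2$ as subsets. The order statement $F_1 \sub F_2 \iff \operatorname{Ex} F_1 \sub \operatorname{Ex} F_2$ is obvious in one direction and follows in the other because the canonical generators $1+(x-1)^s$ of $F_1$ already lie in $F_2$ as soon as $\operatorname{Ex} F_1 \sub \operatorname{Ex} F_2$.

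The subtle point I expect to be the principal obstacle is the converse of the "iff" for isomorphism, since \emph{a priori} abstractly isomorphic subfields may have distinct exponent sets (for instance the various two-element subfields $\langle 1+(x-1)^k\rangle$ with $2k \geq n$ are pairwise isomorphic as $3$-fields but have exponent sets $\{k\}$). The statement should therefore be read in the natural sense that an isomorphism between subfields of $F_0(n)$ is one respecting their embedding, equivalently one induced by an element of $\Aut F_0(n)$; with this reading the remarks preceding the Corollary on automorphisms preserving the lower degree supply the missing direction, and $F_1 \cong F_2 \iff \operatorname{Ex} F_1 = \operatorname{Ex} F_2$ becomes a consequence of the bijection already established.
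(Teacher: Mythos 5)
The paper states this corollary without its own proof (it is offered as a consequence of the preceding lemma on sub-semigroups and the theorem identifying $F_0(n;g_1,\ldots,g_k)$ with $G(n;g_1,\ldots,g_k)$), so your plan has to stand on its own. The closure of $\operatorname{Ex}F$ under addition, the surjectivity $S\mapsto G(n;s_1,\ldots,s_k)$, and the order compatibility for these monomially generated subfields are all fine and follow the intended route. The genuine gap is your injectivity step. From $\operatorname{Ex}F=S$ you may only conclude $G(n;s_1,\ldots,s_k)\sub F$; the preceding theorem does \emph{not} force equality, since it asserts that an arbitrary subfield is \emph{isomorphic} to some $G(n;\vec g)$ (its proof establishes $F_0(n;\vec g)=G(n;\vec g)$ for the monomially generated ones), not that every subfield equals the monomial subfield built from its own exponent set. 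Concretely, in $F_0(5)$ the subfield $F_1=\langle 1+(x-1)^2+(x-1)^3\rangle=\{1,\,1+(x-1)^2+(x-1)^3,\,1+(x-1)^4,\,1+(x-1)^2+(x-1)^3+(x-1)^4\}$ (Lemma~\ref{1-generated subfield} with $k=2$) contains no pure monomial other than $1+(x-1)^4$, so $\operatorname{Ex}F_1=\operatorname{Ex}\langle 1+(x-1)^4\rangle$, while $F_1$ has four elements and $\langle 1+(x-1)^4\rangle$ two. Thus $F\mapsto\operatorname{Ex}F$ is not injective on subfields, and equal exponent sets do not force isomorphism, so both the claimed bijection and the direction ``$\operatorname{Ex}F_1=\operatorname{Ex}F_2\Rightarrow F_1\cong F_2$'' collapse at this point. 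Any correct argument must pass to isomorphism classes through the monomial models of the preceding theorem (attaching to a subfield the semigroup of a monomial model), not through the literal exponent set of an arbitrary subfield.

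Your final paragraph correctly identifies the isomorphism clause as the delicate point -- and your two-element example does contradict the literal ``only if'' under abstract isomorphism -- but the proposed repair does not close it. Reading ``isomorphic'' as ``induced by an element of $\Aut F_0(n)$'' does not make $\operatorname{Ex}$ an invariant: an automorphism sends $1+(x-1)^j$ to $1+P_0^j$, which is in general not a monomial, so the image of a monomial subfield can have a strictly smaller exponent set (for instance, the image of $\langle 1+(x-1)^3\rangle\sub F_0(7)$ under $P_0=(x-1)+(x-1)^2$ contains no pure monomial except $1+(x-1)^6$, whereas $\operatorname{Ex}\langle 1+(x-1)^3\rangle=\{3,6\}$); preservation of the lower degree, which is all the cited remarks give, is not enough. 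So this half of the statement is not proved by your plan: it requires either reformulating $\operatorname{Ex}$ (via the monomial model) or restricting the class of subfields and isomorphisms considered, and an explicit argument for whichever reading is adopted.
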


\subsection{The group of automorphisms and the global involution}

For the following it is important to observe that the elements of
\begin{equation}
\mathcal{Q}F_0(n)=\set{\sum_{i=1}^{n-1}\eps_{i}\left(x-1\right)^{i}}{\eps_{i}\in\mathbb{F}_2}
\end{equation}
are in 1-1 correspondence with unital endomorphisms $\Phi$
of $F_0(n)$: Each such $\Phi$ is uniquely determined
by the polynomial $P_\Phi=\mathcal{Q}\Phi(x-1)\in\mathcal{Q}F_0(n)$ and, conversely,
any polynomial $P=1+P_0\in F_0(n)=1+\mathcal{Q}F_0(n)$ provides a unital endomorphisms $\Phi_P$,
defined for $Q=1+Q_0(x-1)\in F_0(n)$ by
\begin{equation}
\Phi_P(1+Q_0(x-1))=1+Q_0\circ P_0\mod (x-1)^n.
\end{equation}
This map is well-defined
because the ideal generated by $(x-1)^n$ for the ternary polynomial ring
$F_0[x]$ within
\begin{equation}
\mathcal{Q}F_0[x]=\set{\sum_{i=1}^{N}\eps_{i}\left(x-1\right)^{i}}{N\in\mathbb{N}, \eps_{i}\in\mathbb{F}_2}
\end{equation}
is invariant under composition with polynomials in $(x-1)$ from the right.
It is additive and respects multiplication because
\begin{multline}
\Phi_P\left(\left(1+Q_0(x-1)\right)\left(1+R_0(x-1)\right)\right)=\\
=1+\left(Q_0(x-1)+R_0(x-1)+Q_0(x-1)R_0(x-1)\right)\circ P_0=\\
=\Phi_P\left(1+Q_0(x-1)\right)\Phi_P\left(1+R_0(x-1)\right)
\end{multline}
Since $\Phi_Q\circ\Phi_P=\Phi_{Q\circ P}$ we have shown the first part of
\begin{theorem}
The mapping $\Phi\longmapsto\mathcal{Q}\Phi(x-1)$ establishes an isomorphism
between the ring of unital endomorphisms of $F_0(n)$ with respect to composition of maps
and $\mathcal{Q}F_0(n)$, equipped with composition of polynomials in the variable
$(x-1)$.

Under this identification, a polynomial $P$ corresponds to an automorphism of $F_0(n)$
iff there is a polynomial $Q\in F_0(n)$ such that
$P_0\circ Q_0=P_0\circ Q_0=(x-1)$ which is equivalent to
\begin{equation}
\mathcal{Q}\Phi(x-1)=(x-1)+\sum_{i=2}^{n-1}\eps_k(x-1)^k.
\end{equation}
\end{theorem}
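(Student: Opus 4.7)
The plan is to take for granted the isomorphism of the first sentence, whose proof has essentially been completed in the paragraph preceding the theorem statement. What remains is to characterize which $P_0 \in \mathcal{Q}F_0(n)$ give rise to automorphisms $\Phi_P$. Since $\Phi_Q \circ \Phi_P = \Phi_{Q \circ P}$ and $\Phi_{(x-1)}$ is the identity, I reformulate the question as: for which $P_0$ does there exist $Q_0 \in \mathcal{Q}F_0(n)$ with $Q_0 \circ P_0 \equiv (x-1) \pmod{(x-1)^n}$? Because $F_0(n)$ is finite, the monoid of its unital endomorphisms is finite, so a one-sided compositional inverse in $\mathcal{Q}F_0(n)$ already forces the existence of a two-sided one.

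For necessity, write $P_0 = \sum_{i\geq 1}\eps_i(x-1)^i$. If $\eps_1 = 0$, then $P_0$ lies in the ideal $\mathfrak{I}_2 = \langle(x-1)^2\rangle$ of Proposition~\ref{ideals in F_0(n)}. Since every $Q_0 \in \mathcal{Q}F_0(n)$ has vanishing constant term, substituting $P_0$ for the indeterminate yields $Q_0 \circ P_0 \in \mathfrak{I}_2$; as $(x-1) \notin \mathfrak{I}_2$, a compositional inverse cannot exist.

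For sufficiency, assume $\eps_1 = 1$. I will construct $Q_0 = \sum_{k=1}^{n-1}\delta_k(x-1)^k$ with $Q_0 \circ P_0 \equiv (x-1) \pmod{(x-1)^n}$ by determining the coefficients $\delta_k$ recursively. A direct expansion shows that the coefficient of $(x-1)^k$ in $Q_0 \circ P_0$ has the form $\delta_k\eps_1^k + B_k(\delta_1,\ldots,\delta_{k-1};\eps_1,\ldots,\eps_k)$ for some $\mathbb{F}_2$-polynomial $B_k$ depending only on strictly lower-indexed $\delta$'s. Because $\eps_1^k = 1$ in $\mathbb{F}_2$, setting this coefficient equal to the $k$-th coefficient of $(x-1)$ produces a uniquely solvable linear equation for $\delta_k$ at each step, beginning with $\delta_1 = 1$. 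Reduction modulo $(x-1)^n$ causes no difficulty because, as pointed out in the paragraph preceding the theorem, the ideal $\langle(x-1)^n\rangle$ is invariant under composition with polynomials in $(x-1)$ from the right.

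The main technical point is verifying that the coefficient of $(x-1)^k$ in $Q_0 \circ P_0$ is indeed linear in $\delta_k$ with leading coefficient $\eps_1^k = 1$, so that the recursion proceeds without obstruction. This is essentially formal power series inversion truncated at degree $n$, and the crucial input from the hypothesis $\eps_1 = 1$ is precisely the unit linear term required by the classical inversion lemma; over $\mathbb{F}_2$ the verification is purely combinatorial and poses no deeper difficulty.
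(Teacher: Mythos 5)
Your argument is correct, and it takes a route that differs from the paper's in both halves of the equivalence. For necessity you argue through the ideal structure: if the coefficient of $(x-1)$ in $P_0$ vanishes, then $P_0\in\mathfrak{I}_2$, every $Q_0\in\mathcal{Q}F_0(n)$ has zero constant term, and hence $Q_0\circ P_0\in\mathfrak{I}_2$ can never equal $(x-1)$. The paper instead expands the powers $P_0^k$ over $\mathbb{F}_2$ via the binary digits of $k$ to see that the lowest-order term of $Q_0\circ P_0$ cannot be $(x-1)$ unless it already is for $P_0$; your version is precisely the alternative the paper only mentions in passing (\emph{this could also be shown by using the ideal structure}), and it is the cleaner of the two. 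For sufficiency the proofs genuinely diverge: the paper writes $\Phi_P$, with $P_0=(x-1)(1+P_1)$, as a matrix with respect to the basis $(x-1)^\nu$, observes that this matrix is triangular with $1$'s on the diagonal, and concludes that $\Phi_P$ is bijective, so the inverse polynomial $Q_0$ appears only implicitly through the identification of endomorphisms with polynomials. You instead perform truncated power series inversion: since $\eps_1=1$, the coefficient of $(x-1)^k$ in $Q_0\circ P_0$ is $\delta_k$ plus an expression in $\delta_1,\ldots,\delta_{k-1}$ alone, so the $\delta_k$ are determined recursively, and the resulting one-sided compositional inverse is upgraded to a two-sided one by finiteness (a self-map of the finite set $F_0(n)$ with a one-sided inverse is bijective, and the set-theoretic inverse of a bijective morphism is again a morphism). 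What your approach buys is an explicit construction of the polynomial $Q$ asserted in the statement, carried out entirely inside $\mathcal{Q}F_0(n)$, at the cost of the bookkeeping check that the recursion is unobstructed and compatible with reduction mod $(x-1)^n$, which you correctly reduce to the invariance of $\langle(x-1)^n\rangle$ under composition from the right; the paper's triangular-matrix argument is shorter but establishes invertibility of $\Phi_P$ without exhibiting $Q$.
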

\begin{proof}
We still have to show the final statement.
For arbitrary $n=\alpha_0+\alpha_1 2+\ldots+\alpha_N2^N\in\mathbb{N}$, $\alpha_\nu\in\mathbb{F}_2$
and any polynomial $P_0=\sum_{i=1}^{n-1}\eps_k(x-1)^k$
\begin{equation}
P_0^n=\left(\sum_{i=1}^{n-1}\eps_k(x-1)^k\right)^{\alpha_0}\left(\sum_{i=1}^{n-1}\eps_k(x-1)^{2k}\right)^{\alpha_1}
\ldots\left(\sum_{i=1}^{n-1}\eps_k(x-1)^{2^N}\right)^{\alpha_N},
\end{equation}
with some of the factors of highest order potentially equal to zero.
Accordingly, for no polynomial $Q_0$, $Q_0\circ P_0$ has $(x-1)$ as lowest term if $P_0$ doesn't,
and any polynomial $P_0$ giving rise to an automorphism must have $(x-1)$ as lowest term.
\footnote{Note that this is different for polynomials $P$ of the form $1+\sum_{k\geq 1}\eps_k x^k$,
since large powers of the polynomial $x$ might contain a constant term. For example, in $F_0(5)$,
$x^5=x^4+x-1$, and also $(x+x^2+x^4)\circ (1+x+x^2)=x$.} (This could also be shown by using the ideal
structure of $F_0(n)$.)

For the converse, we represent $\Phi_P$ with $P_0=(x-1)^k(1+P_1)$ as a matrix with respect to
the basis $(x-1)^\nu$, $\nu=1,\ldots,n-1$, of $\mathcal{U}(F_0(n))$. Writing
\begin{equation}
\Phi_P\left[(x-1)^\nu\right]=(x-1)^\nu(1+P_1)^\nu
\end{equation}
as row vectors, it turns out that the resulting matrix is upper triangular with 1's on the main diagonal so
that $\Phi_P$ must be invertible.
\end{proof}

Using the multinomial identity
\begin{equation}
(a_1+\ldots a_m)^n=\sum_{i_1+\ldots +i_m=n}\frac{n!}{i_1!\ldots i_m!}a_1^{i_1}\ldots a_m^{i_m}
\end{equation}
we find for the polynomial $P_0=(x-1)+\sum_{\nu=2}^{n-1}\eps_\nu(x-1)^\nu$
\begin{gather}
P_0^k=\sum_{i_1+\ldots i_{n-1}=k}\frac{k!}{i_1!\ldots i_{n-1}!}\eps_2\ldots\eps_{n-1}
                                                           (x-1)^{i_1+2i_2+\ldots+(n-1)i_{n-1}}=\\
=\sum_{\ell=k}^{n-1}\left(\sum_{i_1+2i_2+\ldots +(n-1)i_{n-1}=\ell}
                         \frac{k!}{i_1!\ldots i_{n-1}!}\eps_2\ldots\eps_{n-1}\right)(x-1)^\ell
\end{gather}
On the other hand, over $\mathbb{F}_2$, and with $n=\sum_{\nu=0}^N n_\nu 2^\nu$,
\begin{equation}
(a_1+\ldots a_m)^n=\prod_{\nu=0}^N\left(a_1^{2^\nu}+\ldots+a_m^{2^\nu}\right)^{n_\nu}
\end{equation}
and
\begin{equation}
P_0^k=\prod_{\nu=0}^N\left((x-1)^{2^\nu}+\ldots+\eps_\mu(x-1)^{2^\nu\mu}\ldots+\eps_m(x-1)^{2^\nu m}\right)^{n_\nu}
\end{equation}

\begin{corollary}
The coefficients of $A_P=(\alpha_{k\ell}^P)$ are
\begin{equation}
  \alpha_{k\ell}^P=
  \sum_{i_1+2i_2+\ldots +(n-1)i_{n-1}=\ell}\frac{k!}{i_1!\ldots i_{n-1}!}\eps_2\ldots\eps_{n-1},
\end{equation}
for $k=1,\ldots,n-1$, $\ell=k,\ldots,n-1$.
\end{corollary}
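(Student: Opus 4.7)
The corollary is essentially a bookkeeping consequence of the multinomial expansion that the text has just displayed, so the plan is a direct read-off rather than a new argument. The key identity, already set up in the paragraph preceding the statement, is
\begin{equation*}
\Phi_P\!\left[(x-1)^k\right] = P_0^k \;=\; \sum_{i_1+\ldots+i_{n-1}=k}\frac{k!}{i_1!\ldots i_{n-1}!}\,\eps_2^{i_2}\cdots\eps_{n-1}^{i_{n-1}}\,(x-1)^{i_1+2i_2+\ldots+(n-1)i_{n-1}},
\end{equation*}
obtained by writing $P_0=\sum_{\mu=1}^{n-1}\eps_\mu(x-1)^\mu$ with $\eps_1=1$ and applying the multinomial identity reproduced in the excerpt.

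The plan is then as follows. First, I recall that the matrix $A_P=(\alpha_{k\ell}^P)$ is defined by expressing $\Phi_P[(x-1)^k]$ as a row vector in the basis $(x-1)^1,\ldots,(x-1)^{n-1}$ of $\mathcal{U}(F_0(n))$, so that $\alpha_{k\ell}^P$ is by definition the coefficient of $(x-1)^\ell$ in $P_0^k$ when the latter is reduced modulo $(x-1)^n$. Next, I group the terms in the multinomial expansion above according to the value of the weighted sum $i_1+2i_2+\ldots+(n-1)i_{n-1}$: the coefficient of $(x-1)^\ell$ is exactly the sum displayed in the corollary, with the (implicit) side-constraint $i_1+\ldots+i_{n-1}=k$ absorbed into the fact that $k!/(i_1!\cdots i_{n-1}!)$ is a genuine multinomial coefficient only when that sum equals $k$.

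For the range of indices I use two simple observations. Since $P_0$ has lowest-degree term $(x-1)$, the lowest-degree term of $P_0^k$ is $(x-1)^k$, so $\alpha_{k\ell}^P=0$ for $\ell<k$, giving the range $\ell=k,\ldots,n-1$; on the other side, all monomials of degree $\geq n$ vanish in $F_0(n)$, so terms with $i_1+2i_2+\ldots+(n-1)i_{n-1}\geq n$ simply drop out of the sum and need not be listed. Finally, the coefficients $k!/(i_1!\cdots i_{n-1}!)$ and the $\eps_\nu$'s all lie in $\mathbb{F}_2$, so the factorials should be understood modulo $2$ (Lucas' theorem controls when they survive), which matches the ambient characteristic but requires no further argument here.

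There is essentially no obstacle: the only potential pitfall is notational, namely ensuring that the implicit constraint $\sum i_\mu=k$ is respected when the multinomial coefficient is written out, and that one correctly truncates modulo $(x-1)^n$. Both points are immediate, so the proof reduces to quoting the displayed multinomial expansion and reading off the coefficient of $(x-1)^\ell$.
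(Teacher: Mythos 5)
Your proposal is correct and follows essentially the same route as the paper: the corollary is there a direct read-off of the coefficient of $(x-1)^\ell$ from the displayed multinomial expansion of $P_0^k$, grouped by the weighted degree $i_1+2i_2+\ldots+(n-1)i_{n-1}$, with upper-triangularity ($\ell\geq k$) and truncation modulo $(x-1)^n$ handled exactly as you do. Your added remarks on the implicit constraint $\sum_\mu i_\mu=k$, the exponents $\eps_\mu^{i_\mu}$, and the mod-2 reading of the multinomial coefficients only make explicit what the paper leaves tacit.
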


If $o(\Phi_P)=2^{\ell(\Phi_P)}$ denotes the order of $\Phi_P\in\Aut F_0(n)$ then
$\Phi_P^{-1}=\Phi_P^{o(\Phi_P)-1}$. An algorithm of lower complexity is obtained using matrix products
\begin{corollary}
Let $P=1+(x-1)+P_1=\sum_{\nu=0}^n\eps_\nu (x-1)^\nu$ be the polynomial that belongs to $\Phi_P\in\Aut(F)$. Then
the coefficients of the polynomial $\widehat{P}=\sum_{\nu=0}^n\widehat{\eps}_\nu (x-1)^\nu$ for which
$\Phi_P^{-1}=\Phi_{\widehat{P}}$ can be recursively calculated using
\begin{equation}
\widehat{\eps}_0=\widehat{\eps}_1=1\qquad
\widehat{\eps}_{\ell}=\widehat{\eps}_1\eps^{(1)}_{\ell}+\widehat{\eps}_{2}\eps^{(2)}_{\ell}
+\ldots+\widehat{\eps}_{\ell-1}\eps^{(\ell-1)}_{\ell}
\end{equation}
\end{corollary}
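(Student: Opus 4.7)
The plan is to reduce the inversion identity $\Phi_P\circ\Phi_{\widehat P}=\Id$ to a single polynomial equation in $(x-1)$ and then extract the claimed recursion by comparing coefficients, exploiting a triangularity structure.

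By the theorem immediately preceding, composition of unital endomorphisms corresponds to substitution of polynomials in $(x-1)$: $\Phi_Q\circ\Phi_P$ is represented by the polynomial obtained from $1+P_0$ by replacing $(x-1)$ with $Q_0=Q-1$. Since $\Id=\Phi_x$ corresponds to the polynomial $x=1+(x-1)$, the requirement $\Phi_P\circ\Phi_{\widehat P}=\Id$ becomes, modulo $(x-1)^n$,
\[
\widehat P_0(P_0)\;=\;\sum_{k=1}^{n-1}\widehat\eps_k\,P_0^k\;=\;(x-1).
\]

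The next step is to expand the left hand side using the notation $P_0^k=\sum_\ell\eps^{(k)}_\ell(x-1)^\ell$ from the preceding corollary, so that the coefficient of $(x-1)^\ell$ on the left is $\sum_k\widehat\eps_k\,\eps^{(k)}_\ell$. The key structural input is that $\eps_1=1$, which is precisely the automorphism condition from the theorem above; this forces $P_0^k$ to have no terms below degree $k$ and leading term $(x-1)^k$, i.e.
\[
\eps^{(k)}_\ell=0 \text{ for } \ell<k, \qquad \eps^{(k)}_k=1.
\]
The $\ell=1$ coefficient of the identity then collapses to $\widehat\eps_1\cdot 1=1$, giving $\widehat\eps_1=1$; the initial value $\widehat\eps_0=1$ is forced simply by $\widehat P\in F_0(n)=1+\mathcal{Q}F_0(n)$. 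For $\ell\geq 2$ the right hand side contributes $0$, and isolating the diagonal term $\widehat\eps_\ell\eps^{(\ell)}_\ell=\widehat\eps_\ell$ in the sum $\sum_{k=1}^{\ell}\widehat\eps_k\,\eps^{(k)}_\ell=0$ while working in $\mathbb{F}_2$ (so that $-1=1$) yields exactly the stated recursion.

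The upper-triangular structure of this linear system also guarantees that $\widehat P$ exists and is uniquely determined, thereby recovering (as a byproduct) the invertibility of $\Phi_P$ already established in the theorem. There is no genuinely hard step: the work is all bookkeeping in the substitution and in the coefficient comparison. The main points requiring care are (i) the triangularity $\eps^{(k)}_\ell=0$ for $\ell<k$, which makes the recursion well-defined, (ii) the consistent handling of the $(x-1)^n$ truncation when raising $P_0$ to higher powers, and (iii) the characteristic-$2$ arithmetic that lets the sign disappear when isolating $\widehat\eps_\ell$.
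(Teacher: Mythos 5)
Your proposal is correct: the paper states this corollary without an explicit proof, and your argument --- comparing coefficients in $\widehat P_0\circ P_0=(x-1)$ and using the triangularity $\eps^{(k)}_\ell=0$ for $\ell<k$, $\eps^{(\ell)}_\ell=1$ of the coefficients $\eps^{(k)}_\ell$ of $P_0^k$ (the matrix $A_P$ of the preceding corollary), together with characteristic~2 --- is exactly the bookkeeping the statement's placement implies. One tiny remark: under the paper's convention $\Phi_Q\circ\Phi_P=\Phi_{Q\circ P}$, the equation $\widehat P_0\circ P_0=(x-1)$ arises from $\Phi_{\widehat P}\circ\Phi_P=\Id$ rather than the composition order you quote, but since $\Phi_{\widehat P}$ is a two-sided inverse this does not affect the derivation.
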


\begin{lemma}\label{universal reflection}
Let $s_n=-\sum_{\nu=1}^{n-1}(x-1)^\nu\in \mathcal{Q}F_0(n)$. Then
$R_n:=\Phi_{s_n}$ has order 2 and
\begin{equation}
R_n\left[1+\sum_{i=1}^{n-1}\eps_i(x-1)^i\right]=1+\sum_{i=1}^{n-1}\eps_i\left((x-1)^\hash\right)^i.
\end{equation}
\end{lemma}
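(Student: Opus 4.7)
The plan is to first identify $s_n$ with the $\hash$-element of $(x-1)$ in the $\mathcal{Q}$-ring $\mathcal{Q}F_0(n)$, then exploit the fact that $\hash$ is an involution to conclude $R_n^2=\Id$; the action formula itself is essentially built into the definition of $\Phi_{s_n}$.

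First I would verify $s_n=(x-1)^\hash$ by directly checking the defining equation $s_n\cdot(x-1)=s_n+(x-1)$ in $\mathcal{Q}F_0(n)$. Since $F_0(n)$ has characteristic $1$, coefficients lie in $\mathbb{F}_2$, and the telescoping
\begin{equation*}
s_n\cdot(x-1)=\sum_{\nu=1}^{n-1}(x-1)^{\nu+1}=\sum_{\nu=2}^{n-1}(x-1)^{\nu}\pmod{(x-1)^n}
\end{equation*}
matches $s_n+(x-1)=\sum_{\nu=2}^{n-1}(x-1)^{\nu}$, the $(x-1)$-terms cancelling in characteristic $2$. Once $s_n=(x-1)^\hash$ is established, the action formula stated in the Lemma is nothing but the definition of $\Phi_P$ applied with $P_0=s_n=(x-1)^\hash$ to $Q_0(x-1)=\sum\eps_i(x-1)^i$, which returns $1+\sum\eps_i\bigl((x-1)^\hash\bigr)^i$.

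For the order-$2$ claim I would invoke the observation that the defining equation $y\cdot q=q+y$ of the $\hash$-element is symmetric in $q$ and $y$, so by the uniqueness axiom of a $\mathcal{Q}$-ring one has $(q^\hash)^\hash=q$; i.e.\ $\hash$ is an involution. Since $\hash$ is determined purely by the ring structure, every ring endomorphism preserves it, so
\begin{equation*}
R_n^2(x-1)=R_n(s_n)=R_n\bigl((x-1)^\hash\bigr)=R_n(x-1)^\hash=s_n^\hash=(x-1).
\end{equation*}
As $R_n^2$ is a ring homomorphism fixing both $1$ and the generator $(x-1)$ of $\mathcal{U}F_0(n)$, it must equal $\Id$. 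Nontriviality of $R_n$ (and hence order exactly $2$) holds for $n\geq 3$, since then $s_n$ has a nonzero $(x-1)^2$-term and so $s_n\neq(x-1)$.

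The only real obstacle is conceptual rather than computational: once one recognises $s_n$ as $(x-1)^\hash$, the Lemma collapses to the tautology that $\hash$ is an involution, combined with the definition of $\Phi_P$. A direct route that avoids this recognition would verify $s_n\circ s_n=(x-1)$ by substitution, which amounts to the geometric-series identity $s/(1-s)=t$ for $s=t/(1+t)$ in $\mathbb{F}_2[[t]]/(t^n)$ with $t=(x-1)$; this is equivalent but less transparent.
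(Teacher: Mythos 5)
Your proposal is correct and takes essentially the same route as the paper: both arguments hinge on recognizing $s_n=(x-1)^\hash$ and on $\hash$ being an involution, so that $s_n\circ s_n=(x-1)$ gives $R_n^2=\Id$, while the displayed action formula is just the definition of $\Phi_P$ with $P_0=s_n$ (the paper phrases the key identity as $s_n=1+\left[(x-1)-1\right]^{-1}$, which is exactly the geometric-series variant you mention at the end). Your explicit telescoping check of $s_n\cdot(x-1)=s_n+(x-1)$ and the uniqueness/symmetry argument for $(q^\hash)^\hash=q$ merely spell out steps the paper leaves implicit, so there is no substantive difference.
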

\begin{proof}
Since, with respect to multiplication,
\begin{equation}
s_n=1+\left[(x-1)-1\right]^{-1}
\end{equation}
it follows that $1+(s_n-1)^{-1}=(x-1)$ or,
$s_n\circ s_n=(x-1)$ (which is equivalent to observing that $q\mapsto q^\hash$ is an involution,
Definition~\ref{hash-def} and Example~\ref{hash-ex}).
The somewhat more explicit form for $R_n$ is a consequence of the fact that $s_n=(x-1)^\hash$.
\end{proof}

\begin{lemma}\label{semi-direct-char}
A group $G$ of order $2n$ has a representation $\mathbb{Z}/2\rtimes H$ iff $G$ contains
a normal subgroup $H$ of order $n$ as well as an involution $r\in G\setminus H$.
\end{lemma}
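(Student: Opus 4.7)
The statement is the standard characterization of split extensions of $\mathbb{Z}/2$ by a group of order $n$, and my plan is to dispatch both directions by elementary group-theoretic bookkeeping, with the only real content being the verification that an involution outside $H$ always provides an honest complement.

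For the ``only if'' direction, I would unpack the semidirect product notation: a representation $G = \mathbb{Z}/2 \rtimes H$ (understood, consistently with the paper's convention for $J \rtimes F$, as a split short exact sequence with $H$ the normal factor and $\mathbb{Z}/2$ the complementary quotient, or vice versa) exhibits $H$ inside $G$ as the kernel of the projection onto $\mathbb{Z}/2$, hence as a normal subgroup of order $n$, while the image in $G$ of the generator of $\mathbb{Z}/2$ under the splitting is an element $r$ of order $2$ whose projection is nontrivial; in particular $r \notin H$.

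For the ``if'' direction, given $H \trianglelefteq G$ of order $n$ and an involution $r \in G \setminus H$, I would argue that $\langle r \rangle = \{1,r\} \cong \mathbb{Z}/2$ is a complement to $H$: since $r \notin H$ and $|\langle r\rangle| = 2$, we have $\langle r \rangle \cap H = \{1\}$, and the product formula
\begin{equation}
|H\,\langle r\rangle| \;=\; \frac{|H|\,|\langle r\rangle|}{|H\cap\langle r\rangle|} \;=\; 2n \;=\; |G|
\end{equation}
forces $G = H\,\langle r\rangle$. Combined with normality of $H$, this is the standard internal criterion for $G$ to be the (internal) semidirect product of $H$ and $\langle r\rangle$, which one then translates into the external semidirect product $\mathbb{Z}/2 \rtimes H$ by taking the conjugation action of $r$ on $H$ as the structure homomorphism $\mathbb{Z}/2 \to \Aut(H)$.

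There is no genuine obstacle here; the proof is essentially a restatement of the fact that an index-$2$ normal subgroup together with any chosen involutive coset representative yields a semidirect decomposition. The only point that deserves a sentence in the write-up is to note that the hypothesis $r^2 = 1$ (rather than merely $r \notin H$) is exactly what is needed to make $\langle r\rangle$ a subgroup of order $2$ meeting $H$ trivially, so that the product formula applies.
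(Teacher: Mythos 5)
Your proposal is correct and matches the paper's argument in substance: the paper's one-line proof splits the exact sequence $H\rightarrow G\rightarrow\mathbb{Z}/2$ by sending the nontrivial element of $\mathbb{Z}/2$ to $r$, which is exactly what your complement $\langle r\rangle$ (with the trivial-intersection and product-formula bookkeeping) expresses in internal semidirect-product language. You additionally spell out the easy converse direction, which the paper leaves implicit, but there is no difference in the underlying idea.
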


\begin{proof}
The short exact sequence $H\rightarrow G\rightarrow\mathbb{Z}/2$ can be split by mapping
the non-unit of $\mathbb{Z}/2$ to $r$.
\end{proof}

\begin{lemma}\label{Gamma}
Define
\begin{equation}
M_{n,k}:\Aut F_0(n)\to\Aut F_0(k),\qquad
M_{n,k}\Phi(x+\mathfrak{I}_k)=\Phi(x)+\mathfrak{I}_k,
\end{equation}
where $1+\sum_{\nu=1}^{n-1}\eps_\nu(x-1)^\nu +\mathfrak{I}_k\in F_0(n)/\mathfrak{I}_k$ is identified with
$1+\sum_{\nu=1}^{k-1}\eps_\nu(x-1)^\nu \in F_0(k)$.
\begin{enumerate}
\item Identifying $F_0(k)$ with $F_0(n)/\mathfrak{I}_k$, if $\Phi=\Phi_P$ with
  $P=(x-1)+\sum_{\nu=2}^{n-1}\eps_\nu(x-1)^\nu$ then
  \begin{equation}
  M_{n,k}\Phi_P=(x-1)+\sum_{\nu=2}^{k-1}\eps_\nu(x-1)^\nu,
  \end{equation}
  and $\Ker M_{n,k}$ is equal to
  \begin{equation}
  \Gamma_{n,k}=\set{\Phi_P\in\Aut F_0(n)}{P_0=(x-1)+\sum_{\nu=k}^{n-1}\eps_\nu(x-1)^\nu,\ \eps_\nu\in\mathbb{F}_2}
  \end{equation}
\item Restricting $M_{n,k+1}$ to $\Gamma_{n,k}$ yields an exact sequence
  \begin{equation}
  0\longrightarrow\Gamma_{n,k+1}\longrightarrow\Gamma_{n,k}\longrightarrow\Gamma_{k+1,k}\longrightarrow 0
  \end{equation}
  This sequence splits whenever $k>2$ so that in this case,
  \begin{equation}
  \Gamma_{n,k}\cong\Gamma_{n,k+1}\rtimes\mathbb{F}_2.
  \end{equation}
\item Similarly, $\varphi_n$ generates the group morphism
  \begin{equation}
  \varrho_n:\Aut F_0(n)\to\Aut\left(F_0(n)^2\right),\qquad \varrho_n(\Phi)=\rest{\Phi}{F_0(n)^2}
  \end{equation}
  Identifying the subfield of squares in $F_0(n)$ with $F_0\left(\left\lfloor\frac{n}{2}\right\rfloor\right)$,
  the effect of $\varrho_n$ on the polynomial $P$ representing $\Phi=\Phi_P$ can be seen as
  \begin{equation}
  \varrho_n:\Aut F_0(n)\to\Aut F_0\left(\left\lfloor\frac{n}{2}\right\rfloor\right),\qquad
  P\mapsto\rest{P}{F_0(\lfloor\frac{n}{2}\rfloor)}
  \end{equation}
  In this picture, $\Phi_P\in\Ker\varrho_n$ iff $P=(x-1)+\sum_{\nu\geq\lceil n/2\rceil}\eps_\nu(x-1)^\nu$.
  \item
  Accordingly, for each $k$ with $2^k\leq n$ there is a short exact sequence
  \begin{equation}
  G_{n,k}\longrightarrow\Aut F_0(n)\longrightarrow\Aut\left((F_0(n)^{2^k}\right)
  \cong
  \Aut F_0\left(\left\lfloor\frac{n}{2^k}\right\rfloor\right)
  \end{equation}
  where $G_{n,k}$ is the normal subgroup of automorphisms for which
  $1+(x-1)^{2^k}$ is a fixed point.
  The polynomials $P_0$, representing $\Phi\in G_{n,k}$ via $\Phi(1+(x-1))=1+P_0(x-1)$, are characterized by
\begin{multline}
  P_0=(x-1)+\sum_{i>(n-1)/2^k}\eps_i(x-1)^i=\\
  (x-1)+(x-1)^{\lceil(n-1)/2^k\rceil}
  \sum_{i=1}^{n-1-\lceil(n-1)/2^k\rceil}\eps_i(x-1)^i,
  \qquad \eps_i\in\mathbb{F}_2,
\end{multline}
  and it follows that $G_{n,1}$ is commutative.
\end{enumerate}
\end{lemma}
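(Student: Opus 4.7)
The plan is to use the preceding theorem, which identifies $\Aut F_0(n)$ with a subset of $\mathcal{Q}F_0(n)$ via the correspondence $\Phi\leftrightarrow P_0=\mathcal{Q}\Phi(x-1)$, so that each of the four assertions reduces to an explicit statement about polynomials in the variable $(x-1)$. For part~(1), I would first observe that every $\Phi_P\in\Aut F_0(n)$ preserves each ideal $\mathfrak{I}_k$: since $P_0$ has $(x-1)$ as leading term, $P_0^\nu\in\mathfrak{I}_\nu$ for every $\nu$, so $\Phi_P$ maps $\mathfrak{I}_k$ into itself. Consequently $\Phi_P$ descends to an automorphism of $F_0(n)/\mathfrak{I}_k\cong F_0(k)$, whose associated polynomial is simply the truncation of $P_0$ modulo $(x-1)^k$. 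This yields both the formula for $M_{n,k}\Phi_P$ and the identification $\Ker M_{n,k}=\Gamma_{n,k}$.

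For part~(2), the restriction of $M_{n,k+1}$ to $\Gamma_{n,k}$ plainly has kernel $\Gamma_{n,k+1}$, and surjectivity onto $\Gamma_{k+1,k}=\{\Id,\Phi_{(x-1)+(x-1)^k}\}$ follows because $(x-1)+(x-1)^k$ determines an automorphism of $F_0(n)$ whose reduction modulo $(x-1)^{k+1}$ is the non-trivial element. To produce the splitting when $k>2$, Lemma~\ref{semi-direct-char} reduces the task to exhibiting an involution $\tau\in\Gamma_{n,k}\setminus\Gamma_{n,k+1}$. I would build $\tau$ by successive approximation: set $P^{(k)}_0=(x-1)+(x-1)^k$ and, for $m=k+1,\ldots,n-1$, adjoin a correction $\delta_m(x-1)^m$ so that $P^{(m)}_0\circ P^{(m)}_0\equiv(x-1)\pmod{(x-1)^{m+1}}$. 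The crux is showing that the obstruction at each stage lies in the graded piece into which the available perturbations surject, and this is where the hypothesis $k>2$ must enter.

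For part~(3), the Frobenius $\varphi_n:P\mapsto P^2$ is a 3-field morphism on $F_0(n)$ because in characteristic~$2$ squaring is additive, and it is always multiplicative. Its image is the unital 3-subfield $F_0(n)^2$ generated by $(x-1)^2$, which is isomorphic to $F_0(\lceil n/2\rceil)$. Since every automorphism of $F_0(n)$ commutes with $\varphi_n$, it restricts to an automorphism of $F_0(n)^2$; this is exactly $\varrho_n$, and on polynomials $\varrho_n$ is the natural restriction. Now $\Phi_P\in\Ker\varrho_n$ iff $\Phi_P(1+(x-1)^2)=1+(x-1)^2$, iff $P_0^2=(x-1)^2$ in $F_0(n)$. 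Writing $P_0=(x-1)+q$ with $q\in\mathfrak{I}_2$ and invoking the characteristic~$2$ identity $P_0^2=(x-1)^2+q^2$, the condition reduces to $q^2=0$, which holds iff $q\in\mathfrak{I}_{\lceil n/2\rceil}$.

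Part~(4) is obtained by iterating part~(3): the $k$-fold Frobenius $P\mapsto P^{2^k}$ has image the 3-subfield generated by $(x-1)^{2^k}$, producing the stated short exact sequence with kernel $G_{n,k}$. Its polynomial description follows from $P_0^{2^k}=(x-1)^{2^k}+q^{2^k}$ applied to $P_0=(x-1)+q$, $q\in\mathfrak{I}_2$. For the commutativity of $G_{n,1}$, I would compute directly: for $P^{(i)}_0=(x-1)+q_i$ with $q_i\in\mathfrak{I}_{\lceil n/2\rceil}$, expanding $P^{(1)}_0\circ P^{(2)}_0=P^{(2)}_0+q_1(P^{(2)}_0)$, any monomial in $q_1,q_2$ of combined $q$-degree at least~$2$ lies in $\mathfrak{I}_m$ for some $m\geq n$ and hence vanishes in $F_0(n)$; the surviving cross terms involve $q_1\cdot q_2$ symmetrically, so $P^{(1)}_0\circ P^{(2)}_0=P^{(2)}_0\circ P^{(1)}_0$.

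The main obstacle is unquestionably the splitting involution in part~(2). Back-of-envelope calculation shows that the naive candidate $\Phi_{(x-1)+(x-1)^k}$ need not have order~$2$ in $\Aut F_0(n)$ once $n$ is large relative to $k$, so the iterative correction scheme above must be tracked carefully, relying on the ideal structure provided by Proposition~\ref{ideals in F_0(n)} to guarantee that a valid correction term is available in each graded component.
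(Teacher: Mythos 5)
Your parts (1), (3) and (4) run along the same lines as the paper's own proof: invariance of $\mathfrak{I}_k$ under substitution by polynomials with lowest term $(x-1)$, so that $\Phi_P$ descends to the quotient and $M_{n,k}\Phi_P$ is the truncation of $P_0$; restriction of automorphisms to the subfield of squares, with $\Phi_P\in\Ker\varrho_n$ characterized by $P_0^{2}=(x-1)^{2}$ (and $P_0^{2^k}=(x-1)^{2^k}$ in part (4)); and a direct degree count for the commutativity of $G_{n,1}$. These are sound, and your identification of the subfield of squares with $F_0(\lceil n/2\rceil)$ is in fact the count consistent with $\Ker\varphi_n=\mathfrak{I}_{\lceil n/2\rceil}$ from Lemma~\ref{morphisms}. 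The exactness and surjectivity statements in part (2) are also fine.

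The genuine gap is the splitting in part (2), which you correctly identify as the crux but do not close -- and your proposed successive-approximation scheme cannot close it. Write $P_0=(x-1)+q$ with $q=(x-1)^k+\sum_{\nu>k}\delta_\nu(x-1)^\nu$; then $P_0\circ P_0=(x-1)+q+q\bigl((x-1)+q\bigr)$, and the only contribution to the coefficient of $(x-1)^{2k-1}$ beyond the cancelling $2q$ comes from the term $\binom{k}{1}(x-1)^{k-1}\cdot(x-1)^k$, so that coefficient equals $k\bmod 2$ \emph{independently of all corrections} $\delta_\nu$. Hence, whenever $k$ is odd and $2k-1\leq n-1$, there is no involution in $\Gamma_{n,k}\setminus\Gamma_{n,k+1}$ at all: e.g.\ for $n=6$, $k=3$ every $\Phi_P\in\Gamma_{6,3}\setminus\Gamma_{6,4}$ satisfies $\Phi_P^2=\Phi_{(x-1)+(x-1)^5}\neq\Id$, so $\Gamma_{6,3}\cong C_4\times C_2$ while $\Gamma_{6,4}\cong(\mathbb{Z}/2)^2$, and the sequence does not split. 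So the hypothesis $k>2$ does not enter in the way you anticipate; what matters is the parity of $k$ and the size of $n$ (for $2k-1\geq n$ every element of $\Gamma_{n,k}$ is an involution, and for even $k$ one can truncate an order-two element of the Nottingham group of depth $k-1$). For comparison, the paper's own proof of (2) is different in mechanism but no more complete: it appeals to the universal reflection $R_n$ of Lemma~\ref{universal reflection} together with Lemma~\ref{semi-direct-char}, and since $R_n$ lies in $\Gamma_{n,2}\setminus\Gamma_{n,3}$ this genuinely produces a splitting only for $k=2$; your back-of-envelope worry is thus well founded, but the resolution is to restrict the splitting claim rather than to hunt for higher-order corrections.
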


\begin{proof}
\textbf{(1)}
Because $\mathfrak{I}_k=\Ker\mu_{n,k}=\set{\sum_{i=k+1}^{n-1}\eps_i(x-1)^i}{\eps_i\in\mathbb{F}_2}$ is invariant under substitution by any
polynomial $Q_0=(x-1)+\sum_{i=2}^n\eta_i(x-1)^i$ (which also follows from Lemma~\ref{ideals in F_0(n)}, each $\Phi\in\Aut F_0(n)$ acts on
$F_0(n)/\Ker\mu_{n,k}=F_0(k)$ by substitution and an application of $\mu_{n,k}$. The resulting element
$M_{n,k}(\Phi)\in\Aut F_0(k)$ then maps $x-1$ to $\mu_{n,k}P_0$ for $\Phi=\Phi_P\in\Aut F_0(n)$.
Consequently, $M_{n,k}:\Aut F_0(n)\to\Aut F_0(k)$ is a morphism with
kernel consisting of automorphisms $\Phi_P\in\Aut F_0(n)$ such that for all $Q=1+\sum_{i=1}^{k-1} q_i(x-1)^i$
\begin{equation}
\mu_{n,k}\Phi_P(Q)=\mu_{n,k}\left(1+Q_0\circ P_0\right)=Q.
\end{equation}
which shows that $P=1+(x-1)+\sum_{i=k}^n\eta_i(x-1)^i$.

\textbf{(2)}
The short exact sequence is established using (1).
Polynomials for elements of $\Gamma_{n,k}\setminus\Gamma_{n,k+1}$ are of the form
$P=(x-1)+(x-1)^k+\sum_{i=k+1}^{n-1}\eps_i(x-1)^i$ so that $R_n\not\in\Gamma_{n,k}\setminus\Gamma_{n,k+l}$
if $k>2$. The claim follows from Lemma~\ref{universal reflection} and Lemma~\ref{semi-direct-char}.

\textbf{(3)}
$\varrho_n(\Phi)$ is a well-defined morphism. Identifying $F_0(\lfloor n/2\rfloor)$ with $F_0(n)^2$
via $\sigma_n(P)=P^2$, one has for $P=(x-1)+\sum_{\nu=2}^{n-1}\eps_\nu(x-1)^\nu$
\begin{equation}
\sigma_n^{-1}\varrho_n(\Phi_P)\sigma_n(x-1)=\sigma_n^{-1}P^2=
(x-1)+\sum_{\nu=2}^{\lfloor n/2\rfloor}\eps_\nu(x-1)^\nu
\end{equation}
The elements $\Phi_P$ in the kernel $G_{n,k}$
of $\varrho^k_n$ restrict to all
polynomials $\sum_{i2^k\leq n-1}\eps_i(x-1)^{i2^k}$ as the identity or,
$\sum_{i2^k\leq n-1}\eps_iP_0(x-1)^{i2^k}=\sum_{i2^k\leq n-1}\eps_i(x-1)^{i2^k}$.
Equivalently, $P_0^{2^k}=(x-1)^{2^k}$, showing that $P_0=(x-1)+\sum_{n\leq i2^k}\eps_i(x-1)^i$.
The commutativity of $G_{n,1}$ is a consequence of the fact that for $P_0,Q_0\in G_{n,1}$
we have
\begin{equation}
P_0\circ Q_0(x-1)=Q_0(x-1)+\sum_{i=\lceil n/2\rceil}^{n-1}\eps_i Q_0^i(x-1)=Q_0(x-1)+P_0(x-1)
\end{equation}
\end{proof}

\begin{corollary}
For each $n\geq 3$,
\begin{equation}
\Aut F_0(n)\cong\mathbb{F}_2\rtimes\mathbb{F}_2\ldots\rtimes\mathbb{F}_2,
\end{equation}
an $(n-2)$-fold semi-direct product of $\mathbb{F}_2$.
\end{corollary}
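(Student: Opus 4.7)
The plan is to exploit the descending chain of normal subgroups
\[
\Aut F_0(n) \;=\; \Gamma_{n,2} \;\supset\; \Gamma_{n,3} \;\supset\; \ldots \;\supset\; \Gamma_{n,n} \;=\; \{\Id\}
\]
furnished by Lemma~\ref{Gamma} and to telescope the split short exact sequences supplied by part (2) of that lemma.

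First I would pin down the two endpoints. By the theorem characterizing unital endomorphisms of $F_0(n)$, every $\Phi \in \Aut F_0(n)$ is $\Phi_P$ for a polynomial whose $(x-1)$-expansion satisfies $P_0 = (x-1) + \sum_{\nu=2}^{n-1}\eps_\nu(x-1)^\nu$, which is exactly the defining condition for $\Gamma_{n,2}$; hence $\Aut F_0(n) = \Gamma_{n,2}$. At the bottom, $\Gamma_{n,n}$ forces $P_0 = (x-1)$ and so reduces to the identity automorphism. A parameter count gives $|\Gamma_{n,k}| = 2^{n-k}$, so each successive inclusion $\Gamma_{n,k+1} \subseteq \Gamma_{n,k}$ has index $2$, and the chain has exactly $n-2$ steps.

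Next I would invoke Lemma~\ref{Gamma}(2) for $k = 3, 4, \ldots, n-1$. Each application yields a split short exact sequence $0 \to \Gamma_{n,k+1} \to \Gamma_{n,k} \to \mathbb{F}_2 \to 0$, hence $\Gamma_{n,k} \cong \Gamma_{n,k+1} \rtimes \mathbb{F}_2$. Telescoping from $k = n-1$ downward, $\Gamma_{n,3}$ is realized as an $(n-3)$-fold iterated semi-direct product of copies of $\mathbb{F}_2$.

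The one step not directly covered by the hypothesis $k>2$ in Lemma~\ref{Gamma}(2) is the final join $\Gamma_{n,2} \cong \Gamma_{n,3} \rtimes \mathbb{F}_2$, and this will be the principal obstacle: one must exhibit an involution inside $\Gamma_{n,2}\setminus\Gamma_{n,3}$ in order to split off the top copy of $\mathbb{F}_2$. The universal reflection $R_n$ from Lemma~\ref{universal reflection} does precisely this, since its defining polynomial $s_n = -\sum_{\nu=1}^{n-1}(x-1)^\nu$ has lowest term $(x-1)$ (placing $R_n$ in $\Gamma_{n,2}$) and a nonzero $(x-1)^2$ coefficient (keeping $R_n$ out of $\Gamma_{n,3}$), while $R_n^2 = \Id$. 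Lemma~\ref{semi-direct-char} then delivers the final split, and concatenating with the chain of the previous paragraph exhibits $\Aut F_0(n)$ as the asserted $(n-2)$-fold semi-direct product of $\mathbb{F}_2$.
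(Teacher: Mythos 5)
Your proposal is correct and follows essentially the same route as the paper: the paper's one-line proof by induction rests on the split sequences $\Gamma_{n,k}\cong\Gamma_{n,k+1}\rtimes\mathbb{F}_2$ of Lemma~\ref{Gamma}(2) (the printed citation of Lemma~\ref{morphisms}(2) is evidently a slip), which is exactly your telescoping argument. Your explicit handling of the boundary step $\Gamma_{n,2}\cong\Gamma_{n,3}\rtimes\mathbb{F}_2$ via the involution $R_n$ together with Lemma~\ref{semi-direct-char} fills in a detail the paper leaves implicit, using precisely the tools the paper itself employs to obtain the splitting for $k>2$.
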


\begin{proof}
By induction, this is a consequence of Lemma~\ref{morphisms}~(2).
\end{proof}

This last result by itself does not reveal much of the structure of $\Aut F_0(n)$:
Analogous results hold for each nilpotent Lie groups and, in all likelihood, it is possible
to obtain the former by showing that $\Aut F_0(n)$ is a (nilpotent) Lie group in characteristic 2.
More information can be gained by further investigating the involution that is underlying the proof
of the above result.

\begin{definition}\label{Conjugation}
Let $f\in F_0(n)$ as well as $\Phi\in\Aut F_0(n)$.
\begin{enumerate}
  \item Define conjugations $f^\ast$ and $\Phi^\ast$ by
  \begin{equation}
    f^\ast=R_n(f)\qquad \Phi^\ast(f)=\Phi(f^\ast)^\ast,
  \end{equation}
  \item and we denote the 1-eigenspaces of these involutions by
\begin{equation}
  F_0(n)_1=\set{f\in F_0(n)}{f^\ast=f},\qquad \Aut F_0(n)_1=\set{\Phi\in\Aut F_0(n)}{\Phi^*=\Phi}
\end{equation}
\end{enumerate}
\end{definition}

\begin{example}
All elements of $\mathfrak{J}_k$ with $k\geq n/2$ are contained in $F_0(n)_1$, because if $f=(x-1)^k$
then, for $k=\sum_{n_0}\alpha_\nu 2^\nu$
\begin{equation}
f^\ast=s_n^k=\prod_{n_0}\left(\sum_{\mu=1}^{n-1}(x-1)^{\nu\alpha_\nu 2^\nu}\right)
\end{equation}
\end{example}

\appendix

\section{A representation of the truncated Nottingham groups $\Aut F_0(3)$ -- $\Aut F_0(7)$}
The case $n=2$ is trivial, for $n=3$ the polynomials $1+(x-1)$ and $P(x)=1+(x-1)+(x-1)^2$ give rise to
identity as well as to the automorphism with representing matrix
\begin{equation}
A_P=\begin{pmatrix}1 & 1 \\ 0 & 1\end{pmatrix}
\end{equation}
If $n=4$, besides the identity, $P_1=1+(x-1)+(x-1)^2$, $P_2=1+(x-1)+(x-1)^3$ and $P_3=1+(x-1)+(x-1)^2+(x-1)^3$
correspond to
\begin{equation}
A_1=\begin{pmatrix}1 & 1 & 0\\ 0 & 1 & 0\\ 0 & 0 & 1\end{pmatrix}\quad
A_2=\begin{pmatrix}1 & 0 & 1\\ 0 & 1 & 0\\ 0 & 0 & 1\end{pmatrix}\quad
A_3=\begin{pmatrix}1 & 1 & 1\\ 0 & 1 & 0\\ 0 & 0 & 1\end{pmatrix},
\end{equation}
which is the cyclic group $C_4$.
For $n=5$, $P_\alpha=1+(x-1)+\sum_{i=2}^4\alpha_{i-2}(x-1)^i$, with $(\alpha_0,\alpha_1,\alpha_2)$
representing the binary representation of $\alpha$, produces the matrices
\begin{gather}
A_0=\begin{pmatrix}1 & 0 & 0 & 0\\ 0 & 1 & 0 & 0\\ 0 & 0 & 1 & 0 \\ 0 & 0 & 0 & 1\end{pmatrix}\quad
A_1=\begin{pmatrix}1 & 1 & 0 & 0\\ 0 & 1 & 0 & 1\\ 0 & 0 & 1 & 1 \\ 0 & 0 & 0 & 1\end{pmatrix}\quad
A_2=\begin{pmatrix}1 & 0 & 1 & 0\\ 0 & 1 & 0 & 0\\ 0 & 0 & 1 & 0 \\ 0 & 0 & 0 & 1\end{pmatrix}\quad
A_3=\begin{pmatrix}1 & 1 & 1 & 0\\ 0 & 1 & 0 & 1\\ 0 & 0 & 1 & 1 \\ 0 & 0 & 0 & 1\end{pmatrix}\\
A_4=\begin{pmatrix}1 & 0 & 0 & 1\\ 0 & 1 & 0 & 0\\ 0 & 0 & 1 & 0 \\ 0 & 0 & 0 & 1\end{pmatrix}\quad
A_5=\begin{pmatrix}1 & 1 & 0 & 1\\ 0 & 1 & 0 & 1\\ 0 & 0 & 1 & 1 \\ 0 & 0 & 0 & 1\end{pmatrix}\quad
A_6=\begin{pmatrix}1 & 0 & 1 & 1\\ 0 & 1 & 0 & 0\\ 0 & 0 & 1 & 0 \\ 0 & 0 & 0 & 1\end{pmatrix}\quad
A_7=\begin{pmatrix}1 & 1 & 1 & 1\\ 0 & 1 & 0 & 1\\ 0 & 0 & 1 & 1 \\ 0 & 0 & 0 & 1\end{pmatrix}.
\end{gather}
As observed earlier, this group is isomorphic to the dihedral group of order 8, i.e. $C_4\rtimes C_2$.
Similarly for $n=6$, $P_\alpha=1+(x-1)+\sum_{i=2}^5\alpha_{i-2}(x-1)^i$, with $(\alpha_0,\alpha_1,\alpha_2,
\alpha_3)$
again the binary representation of $\alpha$ we have
{\tiny
\begin{align}
A_0  &  =\begin{pmatrix}1 & 0 & 0 & 0 & 0\\ 0 & 1 & 0 & 0 & 0\\ 0 & 0 & 1 & 0 & 0\\ 0 & 0 & 0 & 1 & 0\\ 0 & 0 & 0 & 0 & 1\end{pmatrix}\quad
A_1     =\begin{pmatrix}1 & 1 & 0 & 0 & 0\\ 0 & 1 & 0 & 1 & 0\\ 0 & 0 & 1 & 1 & 1\\ 0 & 0 & 0 & 1 & 0\\ 0 & 0 & 0 & 0 & 1\end{pmatrix}\quad
A_2       =\begin{pmatrix}1 & 0 & 1 & 0 & 0\\ 0 & 1 & 0 & 0 & 0\\ 0 & 0 & 1 & 0 & 1\\ 0 & 0 & 0 & 1 & 0\\ 0 & 0 & 0 & 0 & 1\end{pmatrix}\quad
A_3       =\begin{pmatrix}1 & 1 & 1 & 0 & 0\\ 0 & 1 & 0 & 1 & 0\\ 0 & 0 & 1 & 1 & 0\\ 0 & 0 & 0 & 1 & 0\\ 0 & 0 & 0 & 0 & 1\end{pmatrix}\\
A_4  &   =\begin{pmatrix}1 & 0 & 0 & 1 & 0\\ 0 & 1 & 0 & 0 & 0\\ 0 & 0 & 1 & 0 & 0\\  0 & 0 & 0 & 1 & 0\\ 0 & 0 & 0 & 0 & 1\end{pmatrix}\quad
A_5    =\begin{pmatrix}1 & 1 & 0 & 1 & 0\\ 0 & 1 & 0 & 1 & 0\\ 0 & 0 & 1 & 1 & 1\\  0 & 0 & 0 & 1 & 0\\ 0 & 0 & 0 & 0 & 1\end{pmatrix}\quad
A_6      =\begin{pmatrix}1 & 0 & 1 & 1 & 0\\ 0 & 1 & 0 & 0 & 0\\ 0 & 0 & 1 & 0 & 1\\ 0 & 0 & 0 & 1 & 0\\ 0 & 0 & 0 & 0 & 1\end{pmatrix}\quad
A_7      =\begin{pmatrix}1 & 1 & 1 & 1 & 0\\ 0 & 1 & 0 & 1 & 0\\ 0 & 0 & 1 & 1 & 0\\ 0 & 0 & 0 & 1 & 0\\ 0 & 0 & 0 & 0 & 1\end{pmatrix}\\
A_8  &   =\begin{pmatrix}1 & 0 & 0 & 0 & 1\\ 0 & 1 & 0 & 0 & 0\\ 0 & 0 & 1 & 0 & 0\\ 0 & 0 & 0 & 1 & 0\\ 0 & 0 & 0 & 0 & 1\end{pmatrix}\quad
A_9     =\begin{pmatrix}1 & 1 & 0 & 0 & 1\\ 0 & 1 & 0 & 1 & 0\\ 0 & 0 & 1 & 1 & 1\\ 0 & 0 & 0 & 1 & 0\\ 0 & 0 & 0 & 0 & 1\end{pmatrix}\quad
A_{10}  =\begin{pmatrix}1 & 0 & 1 & 0 & 1\\ 0 & 1 & 0 & 0 & 0\\ 0 & 0 & 1 & 0 & 1\\ 0 & 0 & 0 & 1 & 0\\ 0 & 0 & 0 & 0 & 1\end{pmatrix}\quad
A_{11}  =\begin{pmatrix}1 & 1 & 1 & 0 & 1\\ 0 & 1 & 0 & 1 & 0\\ 0 & 0 & 1 & 1 & 0\\ 0 & 0 & 0 & 1 & 0\\ 0 & 0 & 0 & 0 & 1\end{pmatrix}\\
A_{12}& =\begin{pmatrix}1 & 0 & 0 & 1 & 1\\ 0 & 1 & 0 & 0 & 0\\ 0 & 0 & 1 & 0 & 0\\ 0 & 0 & 0 & 1 & 0\\ 0 & 0 & 0 & 0 & 1\end{pmatrix}\quad
A_{13} =\begin{pmatrix}1 & 1 & 0 & 1 & 1\\ 0 & 1 & 0 & 1 & 0\\ 0 & 0 & 1 & 1 & 1\\ 0 & 0 & 0 & 1 & 0\\ 0 & 0 & 0 & 0 & 1\end{pmatrix}\quad
A_{14}  =\begin{pmatrix}1 & 0 & 1 & 1 & 1\\ 0 & 1 & 0 & 0 & 0\\ 0 & 0 & 1 & 0 & 1\\ 0 & 0 & 0 & 1 & 0\\ 0 & 0 & 0 & 0 & 1\end{pmatrix}\quad
A_{15}  =\begin{pmatrix}1 & 1 & 1 & 1 & 1\\ 0 & 1 & 0 & 1 & 0\\ 0 & 0 & 1 & 1 & 0\\ 0 & 0 & 0 & 1 & 0\\ 0 & 0 & 0 & 0 & 1\end{pmatrix}
\end{align}
}
Calculating the cycle graph of this group, it turns out to be $G_{16}\!^3=K_4\rtimes C_4=(C_4\times C_2)\rtimes C_2$,
where $K_4$ denotes the Klein 4-group,
\href{https://en.wikipedia.org/wiki/List_of_small_groups#List_of_small_non-abelian_groups}{(Wikipedia, List of small groups)}.

\newpage

\vspace*{\fill}

We follow the same procedure in case $n=7$ and obtain

\vspace*{\fill}

{\tiny
\begin{align}
A_{0}&=
\begin{pmatrix}
 1 & 0 & 0 & 0 & 0 & 0 \\
 0 & 1 & 0 & 0 & 0 & 0 \\
 0 & 0 & 1 & 0 & 0 & 0 \\
 0 & 0 & 0 & 1 & 0 & 0 \\
 0 & 0 & 0 & 0 & 1 & 0 \\
 0 & 0 & 0 & 0 & 0 & 1 \\
\end{pmatrix}
\quad
A_{1}=
\begin{pmatrix}
 1 & 1 & 0 & 0 & 0 & 0 \\
 0 & 1 & 0 & 1 & 0 & 0 \\
 0 & 0 & 1 & 1 & 1 & 1 \\
 0 & 0 & 0 & 1 & 0 & 0 \\
 0 & 0 & 0 & 0 & 1 & 1 \\
 0 & 0 & 0 & 0 & 0 & 1 \\
\end{pmatrix}
\quad
A_{2}=
\begin{pmatrix}
 1 & 0 & 1 & 0 & 0 & 0 \\
 0 & 1 & 0 & 0 & 0 & 1 \\
 0 & 0 & 1 & 0 & 1 & 0 \\
 0 & 0 & 0 & 1 & 0 & 0 \\
 0 & 0 & 0 & 0 & 1 & 0 \\
 0 & 0 & 0 & 0 & 0 & 1 \\
\end{pmatrix}
\quad
A_{3}=
\begin{pmatrix}
 1 & 1 & 1 & 0 & 0 & 0 \\
 0 & 1 & 0 & 1 & 0 & 1 \\
 0 & 0 & 1 & 1 & 0 & 1 \\
 0 & 0 & 0 & 1 & 0 & 0 \\
 0 & 0 & 0 & 0 & 1 & 1 \\
 0 & 0 & 0 & 0 & 0 & 1 \\
\end{pmatrix}
\\
A_{4}&=
\begin{pmatrix}
 1 & 0 & 0 & 1 & 0 & 0 \\
 0 & 1 & 0 & 0 & 0 & 0 \\
 0 & 0 & 1 & 0 & 0 & 1 \\
 0 & 0 & 0 & 1 & 0 & 0 \\
 0 & 0 & 0 & 0 & 1 & 0 \\
 0 & 0 & 0 & 0 & 0 & 1 \\
\end{pmatrix}
\quad
A_{5}=
\begin{pmatrix}
 1 & 1 & 0 & 1 & 0 & 0 \\
 0 & 1 & 0 & 1 & 0 & 0 \\
 0 & 0 & 1 & 1 & 1 & 0 \\
 0 & 0 & 0 & 1 & 0 & 0 \\
 0 & 0 & 0 & 0 & 1 & 1 \\
 0 & 0 & 0 & 0 & 0 & 1 \\
\end{pmatrix}
\quad
A_{6}=
\begin{pmatrix}
 1 & 0 & 1 & 1 & 0 & 0 \\
 0 & 1 & 0 & 0 & 0 & 1 \\
 0 & 0 & 1 & 0 & 1 & 1 \\
 0 & 0 & 0 & 1 & 0 & 0 \\
 0 & 0 & 0 & 0 & 1 & 0 \\
 0 & 0 & 0 & 0 & 0 & 1 \\
\end{pmatrix}
\quad
A_{7}=
\begin{pmatrix}
 1 & 1 & 1 & 1 & 0 & 0 \\
 0 & 1 & 0 & 1 & 0 & 1 \\
 0 & 0 & 1 & 1 & 0 & 0 \\
 0 & 0 & 0 & 1 & 0 & 0 \\
 0 & 0 & 0 & 0 & 1 & 1 \\
 0 & 0 & 0 & 0 & 0 & 1 \\
\end{pmatrix}
\\
A_{8}&=
\begin{pmatrix}
 1 & 0 & 0 & 0 & 1 & 0 \\
 0 & 1 & 0 & 0 & 0 & 0 \\
 0 & 0 & 1 & 0 & 0 & 0 \\
 0 & 0 & 0 & 1 & 0 & 0 \\
 0 & 0 & 0 & 0 & 1 & 0 \\
 0 & 0 & 0 & 0 & 0 & 1 \\
\end{pmatrix}
\quad
A_{9}=
\begin{pmatrix}
 1 & 1 & 0 & 0 & 1 & 0 \\
 0 & 1 & 0 & 1 & 0 & 0 \\
 0 & 0 & 1 & 1 & 1 & 1 \\
 0 & 0 & 0 & 1 & 0 & 0 \\
 0 & 0 & 0 & 0 & 1 & 1 \\
 0 & 0 & 0 & 0 & 0 & 1 \\
\end{pmatrix}
\quad
A_{10}=
\begin{pmatrix}
 1 & 0 & 1 & 0 & 1 & 0 \\
 0 & 1 & 0 & 0 & 0 & 1 \\
 0 & 0 & 1 & 0 & 1 & 0 \\
 0 & 0 & 0 & 1 & 0 & 0 \\
 0 & 0 & 0 & 0 & 1 & 0 \\
 0 & 0 & 0 & 0 & 0 & 1 \\
\end{pmatrix}
\quad
A_{11}=
\begin{pmatrix}
 1 & 1 & 1 & 0 & 1 & 0 \\
 0 & 1 & 0 & 1 & 0 & 1 \\
 0 & 0 & 1 & 1 & 0 & 1 \\
 0 & 0 & 0 & 1 & 0 & 0 \\
 0 & 0 & 0 & 0 & 1 & 1 \\
 0 & 0 & 0 & 0 & 0 & 1 \\
\end{pmatrix}
\\
A_{12}&=
\begin{pmatrix}
 1 & 0 & 0 & 1 & 1 & 0 \\
 0 & 1 & 0 & 0 & 0 & 0 \\
 0 & 0 & 1 & 0 & 0 & 1 \\
 0 & 0 & 0 & 1 & 0 & 0 \\
 0 & 0 & 0 & 0 & 1 & 0 \\
 0 & 0 & 0 & 0 & 0 & 1 \\
\end{pmatrix}
\quad
A_{13}=
\begin{pmatrix}
 1 & 1 & 0 & 1 & 1 & 0 \\
 0 & 1 & 0 & 1 & 0 & 0 \\
 0 & 0 & 1 & 1 & 1 & 0 \\
 0 & 0 & 0 & 1 & 0 & 0 \\
 0 & 0 & 0 & 0 & 1 & 1 \\
 0 & 0 & 0 & 0 & 0 & 1 \\
\end{pmatrix}
\quad
A_{14}=
\begin{pmatrix}
 1 & 0 & 1 & 1 & 1 & 0 \\
 0 & 1 & 0 & 0 & 0 & 1 \\
 0 & 0 & 1 & 0 & 1 & 1 \\
 0 & 0 & 0 & 1 & 0 & 0 \\
 0 & 0 & 0 & 0 & 1 & 0 \\
 0 & 0 & 0 & 0 & 0 & 1 \\
\end{pmatrix}
\quad
A_{15}=
\begin{pmatrix}
 1 & 1 & 1 & 1 & 1 & 0 \\
 0 & 1 & 0 & 1 & 0 & 1 \\
 0 & 0 & 1 & 1 & 0 & 0 \\
 0 & 0 & 0 & 1 & 0 & 0 \\
 0 & 0 & 0 & 0 & 1 & 1 \\
 0 & 0 & 0 & 0 & 0 & 1 \\
\end{pmatrix}
\\
A_{16}&=
\begin{pmatrix}
 1 & 0 & 0 & 0 & 0 & 1 \\
 0 & 1 & 0 & 0 & 0 & 0 \\
 0 & 0 & 1 & 0 & 0 & 0 \\
 0 & 0 & 0 & 1 & 0 & 0 \\
 0 & 0 & 0 & 0 & 1 & 0 \\
 0 & 0 & 0 & 0 & 0 & 1 \\
\end{pmatrix}
\quad
A_{17}=
\begin{pmatrix}
 1 & 1 & 0 & 0 & 0 & 1 \\
 0 & 1 & 0 & 1 & 0 & 0 \\
 0 & 0 & 1 & 1 & 1 & 1 \\
 0 & 0 & 0 & 1 & 0 & 0 \\
 0 & 0 & 0 & 0 & 1 & 1 \\
 0 & 0 & 0 & 0 & 0 & 1 \\
\end{pmatrix}
\quad
A_{18}=
\begin{pmatrix}
 1 & 0 & 1 & 0 & 0 & 1 \\
 0 & 1 & 0 & 0 & 0 & 1 \\
 0 & 0 & 1 & 0 & 1 & 0 \\
 0 & 0 & 0 & 1 & 0 & 0 \\
 0 & 0 & 0 & 0 & 1 & 0 \\
 0 & 0 & 0 & 0 & 0 & 1 \\
\end{pmatrix}
\quad
A_{19}=
\begin{pmatrix}
 1 & 0 & 0 & 1 & 0 & 1 \\
 0 & 1 & 0 & 0 & 0 & 0 \\
 0 & 0 & 1 & 0 & 0 & 1 \\
 0 & 0 & 0 & 1 & 0 & 0 \\
 0 & 0 & 0 & 0 & 1 & 0 \\
 0 & 0 & 0 & 0 & 0 & 1 \\
\end{pmatrix}
\\
A_{20}&=
\begin{pmatrix}
 1 & 0 & 0 & 0 & 1 & 1 \\
 0 & 1 & 0 & 0 & 0 & 0 \\
 0 & 0 & 1 & 0 & 0 & 0 \\
 0 & 0 & 0 & 1 & 0 & 0 \\
 0 & 0 & 0 & 0 & 1 & 0 \\
 0 & 0 & 0 & 0 & 0 & 1 \\
\end{pmatrix}
\quad
A_{21}=
\begin{pmatrix}
 1 & 1 & 1 & 0 & 0 & 1 \\
 0 & 1 & 0 & 1 & 0 & 1 \\
 0 & 0 & 1 & 1 & 0 & 1 \\
 0 & 0 & 0 & 1 & 0 & 0 \\
 0 & 0 & 0 & 0 & 1 & 1 \\
 0 & 0 & 0 & 0 & 0 & 1 \\
\end{pmatrix}
\quad
A_{22}=
\begin{pmatrix}
 1 & 1 & 0 & 1 & 0 & 1 \\
 0 & 1 & 0 & 1 & 0 & 0 \\
 0 & 0 & 1 & 1 & 1 & 0 \\
 0 & 0 & 0 & 1 & 0 & 0 \\
 0 & 0 & 0 & 0 & 1 & 1 \\
 0 & 0 & 0 & 0 & 0 & 1 \\
\end{pmatrix}
\quad
A_{23}=
\begin{pmatrix}
 1 & 1 & 0 & 0 & 1 & 1 \\
 0 & 1 & 0 & 1 & 0 & 0 \\
 0 & 0 & 1 & 1 & 1 & 1 \\
 0 & 0 & 0 & 1 & 0 & 0 \\
 0 & 0 & 0 & 0 & 1 & 1 \\
 0 & 0 & 0 & 0 & 0 & 1 \\
\end{pmatrix}
\\
A_{24}&=
\begin{pmatrix}
 1 & 0 & 1 & 1 & 0 & 1 \\
 0 & 1 & 0 & 0 & 0 & 1 \\
 0 & 0 & 1 & 0 & 1 & 1 \\
 0 & 0 & 0 & 1 & 0 & 0 \\
 0 & 0 & 0 & 0 & 1 & 0 \\
 0 & 0 & 0 & 0 & 0 & 1 \\
\end{pmatrix}
\quad
A_{25}=
\begin{pmatrix}
 1 & 0 & 1 & 0 & 1 & 1 \\
 0 & 1 & 0 & 0 & 0 & 1 \\
 0 & 0 & 1 & 0 & 1 & 0 \\
 0 & 0 & 0 & 1 & 0 & 0 \\
 0 & 0 & 0 & 0 & 1 & 0 \\
 0 & 0 & 0 & 0 & 0 & 1 \\
\end{pmatrix}
\quad
A_{26}=
\begin{pmatrix}
 1 & 0 & 0 & 1 & 1 & 1 \\
 0 & 1 & 0 & 0 & 0 & 0 \\
 0 & 0 & 1 & 0 & 0 & 1 \\
 0 & 0 & 0 & 1 & 0 & 0 \\
 0 & 0 & 0 & 0 & 1 & 0 \\
 0 & 0 & 0 & 0 & 0 & 1 \\
\end{pmatrix}
\quad
A_{27}=
\begin{pmatrix}
 1 & 1 & 1 & 1 & 0 & 1 \\
 0 & 1 & 0 & 1 & 0 & 1 \\
 0 & 0 & 1 & 1 & 0 & 0 \\
 0 & 0 & 0 & 1 & 0 & 0 \\
 0 & 0 & 0 & 0 & 1 & 1 \\
 0 & 0 & 0 & 0 & 0 & 1 \\
\end{pmatrix}
\\
A_{28}&=
\begin{pmatrix}
 1 & 1 & 1 & 0 & 1 & 1 \\
 0 & 1 & 0 & 1 & 0 & 1 \\
 0 & 0 & 1 & 1 & 0 & 1 \\
 0 & 0 & 0 & 1 & 0 & 0 \\
 0 & 0 & 0 & 0 & 1 & 1 \\
 0 & 0 & 0 & 0 & 0 & 1 \\
\end{pmatrix}
\quad
A_{29}=
\begin{pmatrix}
 1 & 1 & 0 & 1 & 1 & 1 \\
 0 & 1 & 0 & 1 & 0 & 0 \\
 0 & 0 & 1 & 1 & 1 & 0 \\
 0 & 0 & 0 & 1 & 0 & 0 \\
 0 & 0 & 0 & 0 & 1 & 1 \\
 0 & 0 & 0 & 0 & 0 & 1 \\
\end{pmatrix}
\quad
A_{30}=
\begin{pmatrix}
 1 & 0 & 1 & 1 & 1 & 1 \\
 0 & 1 & 0 & 0 & 0 & 1 \\
 0 & 0 & 1 & 0 & 1 & 1 \\
 0 & 0 & 0 & 1 & 0 & 0 \\
 0 & 0 & 0 & 0 & 1 & 0 \\
 0 & 0 & 0 & 0 & 0 & 1 \\
\end{pmatrix}
\quad
A_{31}=
\begin{pmatrix}
 1 & 1 & 1 & 1 & 1 & 1 \\
 0 & 1 & 0 & 1 & 0 & 1 \\
 0 & 0 & 1 & 1 & 0 & 0 \\
 0 & 0 & 0 & 1 & 0 & 0 \\
 0 & 0 & 0 & 0 & 1 & 1 \\
 0 & 0 & 0 & 0 & 0 & 1 \\
\end{pmatrix}
\end{align}

}

\vspace*{\fill}

The cycle graph reveals that $\Aut F_0(7)=G_{32}\!^6=\left((C_4\times C_2)\rtimes C_2\right)\rtimes C_2$.

\newpage

\vspace*{\fill}

\begin{figure}[H]
\centering
\includegraphics[scale=1.5]{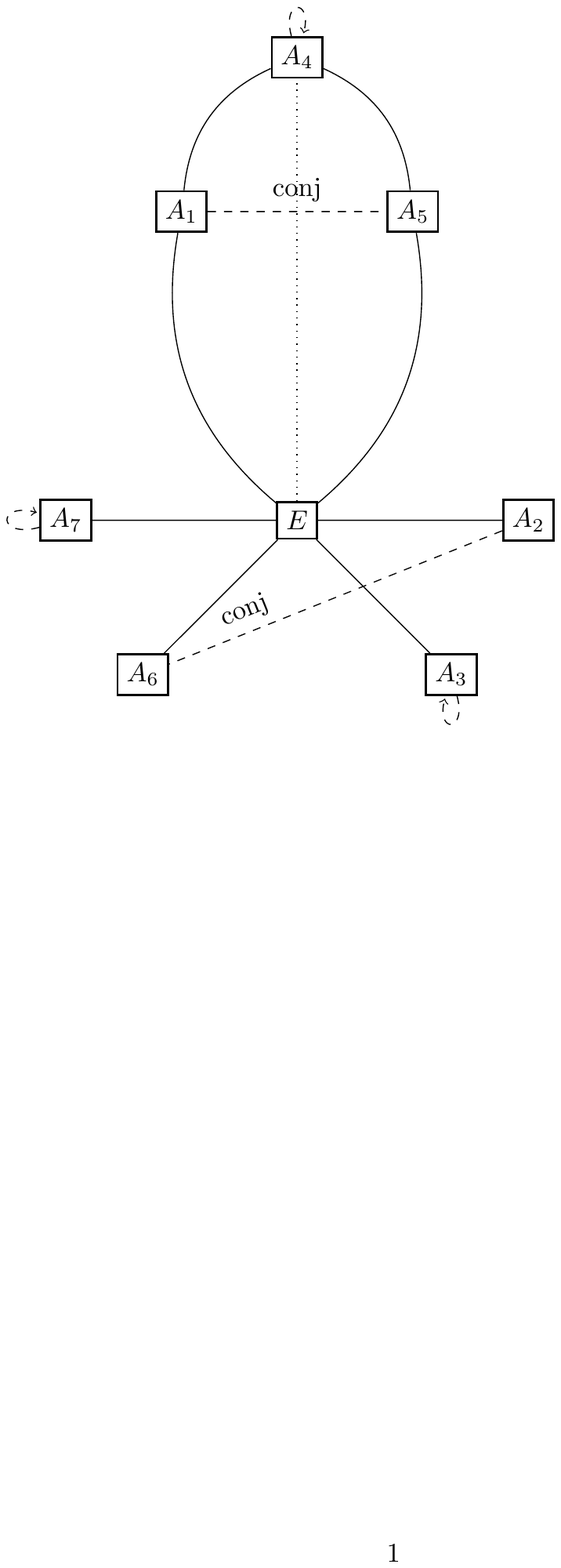}
\caption{\emph{Cycle graph for $\Aut F_0(5)=C_4\rtimes C_2=D_4$, the Dihedral Group of order 8,
with $C_2$ acting by inversion. The dotted lines indicate conjugation, as in} Definition~\ref{Conjugation}.
}
\end{figure}

\vspace*{\fill}

\newpage

\vspace*{\fill}

\begin{figure}[H]
\centering
\includegraphics[scale=1.2]{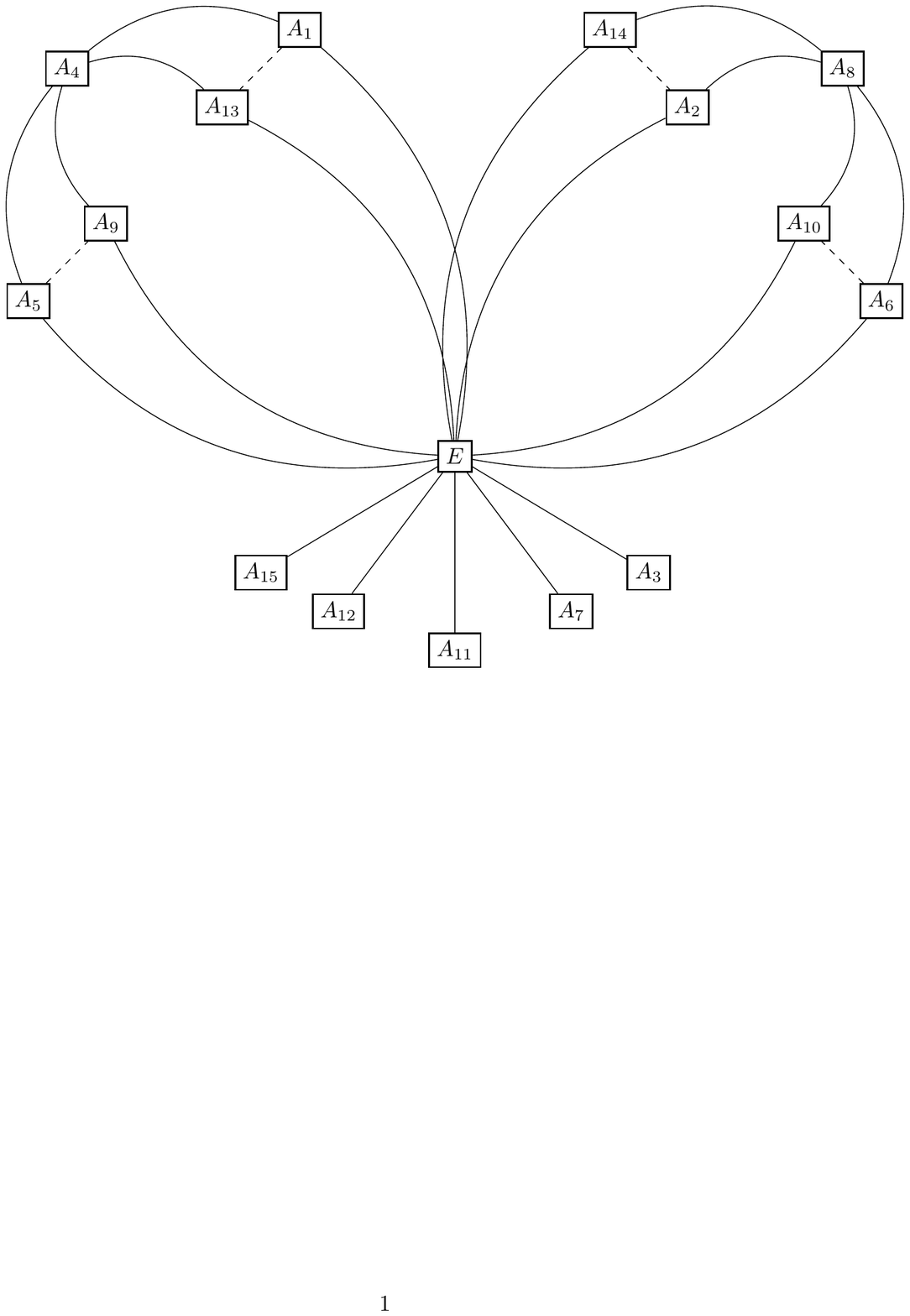}
\caption{\emph{Cycle graph for $\Aut F_0(6)=((C_4\times C_2)\rtimes C_2)$}.}
\end{figure}

\vspace*{\fill}

\newpage

\end{document}